\documentclass{elsarticle}

\usepackage{latexsym,bm,amssymb,amsfonts,amsbsy,amsmath,graphics,graphicx,color}

\usepackage{multirow}%
\usepackage{amsthm}%
\usepackage{mathrsfs}%
\usepackage[title]{appendix}%
\usepackage{xcolor}%
\usepackage{textcomp}%
\usepackage{manyfoot}%
\usepackage{booktabs}%
\usepackage{algorithm}%
\usepackage{algorithmicx}%
\usepackage{algpseudocode}%
\usepackage{listings}%
\usepackage{comment}

\usepackage{geometry,graphicx,amssymb,amsfonts,amsmath,bm,url}
\usepackage{pifont}
\usepackage{url}
\usepackage{fancyhdr}
\usepackage{pstricks,pst-node,pst-text,pst-3d}

\newtheorem{assumption}{Assumption}
\newtheorem{proposition}{Proposition}
\newtheorem{lemma}{Lemma}

\newtheorem{theorem}{Theorem}

\theoremstyle{remark}
\newtheorem{remark}{Remark}

\def \RR {{\mathbb{R}}}

\def \x {{\bf x}}

\def \x {{\bf x}}

\DeclareMathOperator{\MSE}{{E}}
\DeclareMathOperator{\MSEuw}{{E}_{\mathrm{uw}}}
\DeclareMathOperator{\MSEmin}{{E}_{\mathrm{min}}}

\DeclareMathOperator{\diag}{diag}
\DeclareMathOperator*{\argmin}{arg\,min}

\newrgbcolor{marrone}{0.6156 0.3164 0.0313}
\newrgbcolor{DBlue}{0 0 0.7}
\newrgbcolor{ts}{0.50 0.598 0.781}
\newrgbcolor{verde}{0 0.4 0}
\newrgbcolor{LBlue}{0.58 0.65 0.9}
\newrgbcolor{Blue}{0.1 0.00 0.60}
\newrgbcolor{rosso}{0.9633    0.1641    0.0039}

\newrgbcolor{giorgia}{0.1 0.00 0.60}
\newrgbcolor{felice}{0.9633    0.1641    0.0039}
\newrgbcolor{pierluigi}{0.50 0.598 0.781}

\begin{document}
\begin{frontmatter}

\title{Maximum Entropy Least Squares Solutions \\ of Overdetermined Linear Systems}

\author[bari]{Felice~Iavernaro\corref{cor}}
\ead{felice.iavernaro@uniba.it }
\author[bari]{Monica Lazzo}
\ead{monica.lazzo@uniba.it }
\author[bari]{Lorenzo Pisani}
\ead{lorenzo.pisani@uniba.it }
\cortext[cor]{Corresponding author}

\address[bari]{Dipartimento di Matematica, Universit\`a degli Studi di Bari Aldo Moro, Italy}

\begin{abstract}
We investigate the theoretical foundations of a recently introduced entropy-based formulation of weighted least squares for the approximation of overdetermined linear systems, motivated by robust data fitting in the presence of sparse gross errors. 

The weight vector is interpreted as a discrete probability distribution and is determined by maximizing Shannon entropy under normalization and a prescribed mean squared error (MSE) constraint. 
Unlike classical ordinary least squares, where the error level is an output of the minimization process, here the MSE value plays the role of a control parameter, and entropy selects the least biased weight distribution achieving the prescribed accuracy.

The resulting optimization problem is nonconvex due to the nonlinear coupling between the weights and the solution induced by the residual constraint. 
We analyze the associated optimality system and characterize stationary points through first- and second-order conditions. 
We prove the existence and local uniqueness of a smooth branch of entropy-maximizing configurations emanating from the ordinary least squares solution and establish its global continuation under suitable nondegeneracy conditions. 
Furthermore, we investigate the asymptotic regime as the prescribed MSE tends to zero and show that, under appropriate assumptions, the limiting configuration concentrates on a largest subset of data consistent with the linear model, thus suppressing the influence of outliers.

Two numerical experiments illustrate the theoretical findings and confirm the robustness properties of the method.

\end{abstract}

\begin{keyword}
overdetermined systems \sep weighted least squares approximation  \sep entropy \sep outliers detection
\MSC{65F20, 65D10, 94A17, 62F35, 26B10}
\end{keyword}

\end{frontmatter}

\section{Introduction}
\label{sec_intro}

The approximation of overdetermined linear systems by least squares methods is a classical and fundamental topic in numerical linear algebra, optimization, and statistics. 
Given an overdetermined system $Ax=b$, the Ordinary Least Squares (OLS) solution minimizes the Euclidean norm of the residual vector and enjoys well-known optimality properties under standard modeling assumptions such as homoscedasticity and normality of errors (see, e.g., \cite{Montgomery2012,Seber2012}). 
From an optimization viewpoint, OLS corresponds to the unconstrained minimization of a strictly convex quadratic functional and therefore admits a unique global minimizer whenever $A$ has full column rank.

In many practical applications, however, the observation vector $b$ is contaminated not only by small Gaussian perturbations but also by sparse and possibly large anomalous deviations (outliers). 
In such situations, classical least squares may lose robustness, since even a few corrupted data points can significantly bias the solution and inflate the mean squared error (see \cite{Huber2009}). 
Weighted least squares offers additional modeling flexibility by assigning different influence to different observations, but it leaves open the delicate issue of determining the weights in a principled and stable way.

Recently, a maximum entropy principle has been proposed as a systematic criterion for selecting weights in least squares approximation. 
The key idea was introduced in \cite{GiIa21} and  consists in interpreting the weight vector as a discrete probability distribution and determining it by maximizing Shannon's entropy under the constraint that the associated weighted MSE equals a prescribed value $\MSE$. Unlike in OLS problems, where the error level is an output of the minimization process, here  $\MSE$ is an input parameter, and the entropy principle selects the least biased weight distribution achieving that accuracy level. This entropy-driven strategy has proved effective in enhancing robustness in spline approximation and signal processing applications, automatically reducing the influence of anomalous observations while preserving consistency with the bulk of the data \cite{BrGiIaRu24,DeFaIaLoMaRu25,AmBrIa26,AmFaGrIaLaMaNoRu}.

Although the entropy functional is strictly concave on the probability simplex, the MSE constraint induces a nonlinear coupling between the weights and the unknown solution, so that  the resulting optimization problem is nonconvex. 
Consequently, multiple local maximizers may co-exist, and the classical global convexity arguments are no longer applicable. 
This structural feature motivates a careful analytical investigation of local solution branches rather than global optimality in the entire feasible set.

The main contribution of this paper is a dynamical reinterpretation of the entropy-constrained problem. 
Starting from the first-order optimality conditions obtained via Lagrange multipliers, we reformulate the stationary equations as a nonlinear system depending on the parameter $\MSE$. 
Instead of treating this system purely as a family of static optimization problems, we regard $\MSE$ as a continuation parameter and analyze the set of stationary points as a solution manifold parameterized by $\MSE$.
More specifically, we show that in a neighborhood of the OLS configuration, corresponding to uniform weights, the stationary conditions define a locally unique smooth branch of solutions. 
This local result is obtained by applying the Implicit Function Theorem under suitable nondegeneracy assumptions. 
Subsequently, we transform the parameter-dependent nonlinear system into a differential system governing the evolution of the multipliers, weights, and solution vector with respect to  $\MSE$, which acts as the independent variable.   
In this way, the entropy-maximizing configurations are no longer viewed as isolated critical points, but as trajectories of a dynamical system in an extended state space. This dynamical formulation provides several advantages:
\begin{itemize}
\item it yields a constructive mechanism for continuing the solution branch beyond the local regime;
\item it clarifies the role of singularities as points where the Jacobian of the stationary system loses invertibility;
\item it allows a precise description of how the weight distribution progressively concentrates on subsets of data as $\MSE$ decreases.
\end{itemize}
From this perspective, we perform a global analysis by studying the maximal interval of existence of the associated Cauchy problem.
Under suitable nondegeneracy assumptions, we prove that the branch can be continued until either a degeneracy condition occurs or the limiting regime $\MSE \to 0$ is reached. 
In the latter case, we show that the entropy-maximizing strategy naturally isolates a largest subset of data consistent with the linear model, effectively suppressing outliers without requiring their a priori identification.

Therefore, the entropy-driven least squares method admits a unified interpretation: 
it generates a continuous path connecting the classical OLS solution to increasingly selective approximations obtained by progressively lowering the admissible error level. 
From this perspective, robustness emerges as a dynamical selection mechanism governed by the entropy principle.

It is worth emphasizing that  the present work has a primarily theoretical character. 
While the robustness properties and numerical performance of the entropy-driven weighted least squares methodology,  also in comparison with state-of-the-art robust techniques, have already been extensively documented in the aforementioned references, a comprehensive analytical foundation of the approach has so far been lacking. 
The main objective of this paper is precisely to fill this gap by providing a rigorous study of the structure of stationary points, their local and global continuation, and their asymptotic behavior. 
The simple numerical experiments included here serve exclusively to illustrate and validate the theoretical findings rather than to perform a new empirical assessment of the method.

The paper is organized as follows. 
In Section \ref{sec_formulation} we introduce the mathematical formulation of the maximum-entropy weighted least squares problem and derive the associated first-order optimality conditions. 
Section \ref{sec_local_results} is devoted to the local analysis near the OLS solution and to the proof of existence and uniqueness of a smooth solution branch. 
In Section \ref{sec_global_continuation} we develop the global continuation theory by reformulating the problem as a Cauchy problem parameterized by $\MSE$. 
Section~\ref{sec_asymptotic} presents an asymptotic analysis of the branch, including the limiting case $\MSE \to 0$. 
Section \ref{sec_numerics} provides numerical illustrations that confirm the theoretical results and highlight the actual behavior of the method. Finally, concluding remarks are presented in Section \ref{sec_conclusions}.

\section{Mathematical Formulation of the Maximum-Entropy Least Squares Problem}
\label{sec_formulation}

For given integers $m\ge n$,  matrix $A \in \mathbb{R}^{m \times n}$, vectors $b\in \mathbb{R}^m$ and $x\in \RR^n$,  the weighted least squares problem associated with the overdetermined, possibly inconsistent, linear system $Ax=b$ consists in finding 
\begin{equation}
\label{WLS-problem}
x(w) := \argmin_{x \in \mathbb{R}^n} \sum_{i=1}^m w_i (a_i^\top x - b_i)^2 = \argmin_{x \in \mathbb{R}^n} \|\sqrt{W}(Ax-b)\|_2^2,
\end{equation}
where $a_i^\top$ denotes the $i$th row of $A$, $w=(w_1,\dots,w_m)^\top \in \RR^m$ is a given  vector of positive weights and $W=\diag(w)$ denotes the diagonal matrix whose principal diagonal is $w$. Without loss of generality, the weights are assumed to satisfy the normalization condition 
\begin{equation}
\label{normalization}
\sum_{i=1}^n w_i = 1.
\end{equation} 
Assuming $A$ has full column rank, it is well known and easy to show that the solution of  problem  \eqref{WLS-problem} is unique and may be obtained, for example, as the solution of  the normal system
\begin{equation}
\label{normalsys}
A^\top W A x = A^\top W b.
\end{equation}
The corresponding  weighted Mean Squared Error (MSE) is 
\begin{equation}
\label{MSEbar}
E = \sum_{i=1}^m w_i (a_i^\top x(w) - b_i)^2.
\end{equation}
The use of the symbol $\MSE$ to denote the value of the MSE in \eqref{MSEbar}, in place of more standard notations,   is motivated by the fact that the  MSE will be repeatedly used as a scalar parameter in continuation arguments and as an independent variable in differential equations.

In practical real-world applications, the observation vector $ b \in \mathbb{R}^m $ may be decomposed in three distinct contributions:
$$
b = b_1 + \varepsilon + \delta,
$$
where each term reflects a different source of information or error. The first term, $ b_1 $, is assumed to perfectly conform to a theoretical linear model that accurately captures the ideal behavior of the underlying phenomenon under observation: $b_1 = A x_{\text{true}}$. The second term, $\varepsilon$, accounts for systematic errors induced by the finite precision of the measurement instruments such as sensors, analog-to-digital converters, or other electronic devices, and is modeled as a realization of a zero-mean Gaussian process with known and bounded variance: $\varepsilon \sim \mathcal{N}(0, \sigma^2)$. Finally, $ \delta $ represents sparse gross errors (outliers), arising from various unpredictable sources such as sensor failure, environmental interference, human error during data entry, or software glitches.

Ordinary Least Squares (OLS) corresponds to the classical choice $ w^* = \frac{1}{m} u $, with $ u = (1, \dots, 1)^\top $, representing a uniform distribution of weights. We denote by $ x^* $ the OLS solution,  $ r^* = A x^* - b $ the residual vector and $\MSEuw=\frac{1}{m}||r^*||_2^2$ the corresponding MSE ($\mathrm{uw}$ stands for {\em uniform weights}). This standard approximation strategy is appropriate when the third source of errors $\delta$ is absent or has small norm. In fact, it is well known that two  assumptions underlying the validity of the OLS approach are: \emph{homoscedasticity}, meaning that the residuals exhibit constant variance across the range of fitted values; \emph{normality of errors}, which implies that the residuals are approximately normally distributed \cite{Montgomery2012,Seber2012}. 

Violations of these assumptions can lead to inefficient estimates and unreliable inference. Typically, when comparing the associated MSE  in  absence ($\delta  =0$) and presence ($\delta \not =0$)  of  corrupted data, a substantial increase in the value of $\MSEuw$ can be observed: in such a case, non-uniform weight distributions could help in mitigating the impact of such violations \cite{Huber2009}.

A recently introduced robust approximation strategy, capable of effectively addressing the anomalies introduced by the third component $\delta$, adopts a maximum-entropy-based formulation of weighted least squares. In this approach, the normalization condition (\ref{normalization}) allows the weight vector to be interpreted as a discrete probability distribution, and the associated entropy
\begin{equation}
\label{entropy}
H(w) = -\sum_{i=1}^m w_i \log w_i
\end{equation}
is maximized under a prescribed MSE constraint \cite{GiIa21, BrGiIaRu24, DeFaIaLoMaRu25}. Specifically, we consider the family of constrained optimization problems parameterized by a target mean squared error level $\MSE \in (0, \MSEuw]$:

\begin{equation}
\label{MEWLS}
\begin{aligned}
\max_{w \in \mathbb{R}^m} \quad & -\sum_{i=1}^m w_i \log w_i \\
\text{subject to} \quad & \sum_{i=1}^m w_i = 1, \\
& w_i \geq 0 \quad \text{for all } i = 1, \dots, m, \\
& \sum_{i=1}^m w_i (a_i^\top x - b_i)^2 = \MSE.
\end{aligned}
\end{equation}

Unlike in ordinary least squares problems, where the mean squared error $\MSEuw$ emerges as an output of the minimization process, in this entropy-based formulation $\MSE$ is an input parameter: the maximum entropy principle is employed to guide the selection of the weight distribution that achieves the specified MSE level.
\begin{remark}
\label{remH}
{\em
When $\MSE = \MSEuw$, the unique {\em global maximizer} of \eqref{MEWLS} is the uniform weight vector $w^* = \frac{1}{m} u$, where $u = (1, \dots, 1)^\top$. However, for any $\MSE < \MSEuw$, $w^*$ becomes infeasible since the weights are to be reallocated in order to obey the MSE constraint. Due to the strict concavity of the entropy function $H(w)$ in the probability symplex, any feasible $w \ne w^*$ satisfies $H(w) < H(w^*)=\log m$. Hence, entropy necessarily decreases as the target $\MSE$ drops below $\MSEuw$.
}
\end{remark}

The present study aims to investigate the existence, uniqueness and the behavior of {\em local maximizers} of \eqref{MEWLS} as $\MSE$ decreases from $\MSEuw$. In particular, we seek to determine whether a smooth branch of local solutions $(x(\MSE), w(\MSE))$ exists for $\MSE < \MSEuw$, and to characterize the  properties of such a branch.

It is worth noting that while the entropy function $w \mapsto -\sum_i w_i \log w_i$ is strictly concave on the probability simplex, the MSE constraint
$$
f(x, w) := \sum_{i=1}^m w_i (a_i^\top x - b_i)^2 = \MSE
$$
defines a generally nonconvex set in the $(x,w)$ space. 
%The analysis of the feasible set geometry is thus crucial for understanding the structure of the solution manifold.
To see this, we analyze the structure of the Hessian matrix of the function $f(x,w)$. Define the residuals as $r_i = a_i^\top x - b_i$, and let $r = Ax - b \in \mathbb{R}^m$. The Hessian matrix of $f$ with respect to the joint variable $(x, w)$ is a block matrix given by
\begin{equation}
\label{Hf}
\nabla^2_{(x,w)} f(x, w) =
2 \begin{bmatrix}
A^\top W A & A^\top\diag(r) \\
\diag(r) A & 0
\end{bmatrix}.
\end{equation}

Under the assumption that $A$ has full column rank, the matrix $A^\top W A$ is symmetric positive definite. Applying Schur's criterion, $\nabla^2_{(x,w)} f(x, w)$ is positive semidefinite if and only if the Schur complement 
$$
S = - \diag(r) A  (A^\top W A)^{-1} A^\top\diag(r)
$$
is positive definite. However, this matrix is always negative semidefinite, and strictly negative definite for all directions $z$ such that  $\diag(r)z \not \in \mathrm{ker}(A^\top)$. This implies that the full Hessian (\ref{Hf}) is indefinite. Therefore, the function $f(x, w)$ is not jointly convex in $(x, w)$, and its level sets
$$
\{ (x, w) \in \mathbb{R}^d \times \mathbb{R}^m : f(x, w) = \MSE \}
$$
are, in general, not convex sets. Consequently, problem \eqref{MEWLS} is nonconvex, and multiple local maxima may exist for $\MSE < \MSEuw$. This justifies our focus on local, rather than global optimality when looking at the branch of solutions originating from the OLS configuration. 

Furthermore, since to the OLS solution there corresponds the uniform weight distribution, the branch will lie in the interior of the feasible compact set defined by the constraints so that, to solve problem \eqref{MEWLS}, we apply the classical method of Lagrange multipliers, introducing multipliers only for the {\em equality} constraints. Any stationary point of the Lagrangian necessarily satisfies $w_i>0$, so the inequality constraints are implicitly enforced  by the domain of the objective function. In fact, considering the Lagrangian function 
\begin{equation}
\label{Lagrangian}
\begin{array}{rl}
\mathcal{L}(\lambda, \mu, w, x) = & \displaystyle
\sum_{i=1}^{m} w_i \log w_i 
+ \lambda \left( \sum_{i=1}^{m} w_i -1 \right) + \mu \left( \sum_{i=1}^{m} w_i (a_i^\top x - b_i)^2 - \MSE  \right),
\end{array}
\end{equation}
its gradient with respect to $w$ reads
\begin{equation}
\label{Dwk}
\frac{\partial \mathcal{L}}{\partial w} = \log w + (1+\lambda)u +\mu (Ax-b)^2,
\end{equation}
where, for a given vector $z$, the notation $z^2$ denotes the Hadamard (element-wise) square, i.e., $z^2 = z \odot z$. Setting \eqref{Dwk} to zero and solving for $w$ gives
\begin{equation}
\label{w-expr}
w=\exp(-(1+\lambda))\exp\left(-\mu (Ax-b)^2\right),
\end{equation}
which ensures that $w_i > 0$ for all $i$, at any stationary point of $\mathcal{L}$. Differentiating $\mathcal{L}$ with respect to $\lambda$, $\mu$, and $x$, we obtain
\begin{equation}
\label{Dlam}
\frac{\partial \mathcal{L}}{\partial \lambda}=\sum_{i=1}^{m} w_i - 1,
\end{equation}
\begin{equation}
\label{Dmu}
\frac{\partial \mathcal{L}}{\partial \mu} = \sum_{i=1}^{m} w_i (a_i^\top x - b_i)^2 - \MSE,
\end{equation}
and
\begin{equation}
\label{Dx}
\frac{\partial \mathcal{L}}{\partial x} = 2 \mu (A^\top W A x - A^\top W b).
\end{equation}
When seeking the stationary points of $\mathcal{L}$, the contribution of this latter component splits in two different 
equations:
\begin{itemize}
\item $\mu=0$, which deactivates the MSE constraints and leads back to the OLS approximation
\begin{equation}
\label{critical0}
(\lambda^*,\mu^*,(w^*)^\top,(x^*)^\top)^\top=\left(-(1+\log \frac{1}{m}),0,\frac{1}{m}u^\top,({(A^\top A)}^{-1}{A^\top b})^\top\right)^\top,
\end{equation}
\item  $A^\top W A x = A^\top W b $, that is the normal system (\ref{normalsys}). 
\end{itemize}
Therefore, to compute the points along the solution branch, we study the zero set of the nonlinear vector function
\begin{equation}
\label{sys10}
F(\lambda,\mu,w,x) = (F_1, F_2, F_3^\top, F_4^\top)^\top := \left(\frac{\partial {\cal L}}{\partial \lambda}, \frac{\partial {\cal L}}{\partial \mu}, \frac{\partial {\cal L}}{\partial w}^\top, \frac{1}{2\mu}\frac{\partial {\cal L}}{\partial x}^\top \right)^\top.
\end{equation} 

In the next two sections we carry out a local analysis, where $ \MSE $ is close to $ \MSEuw $, and a global analysis, where $\MSE$ is reduced in order to make the impact of the anomaly term $\delta$ on the model essentially negligible.

In the particular case where $\MSE=0$, we show that, under suitable assumptions, the entropy-maximizing strategy naturally isolates the largest consistent subset of the data, suppressing outliers without requiring their a priori  identification.

\section{Local results}
\label{sec_local_results}
In this section, we investigate the solvability of problem (\ref{WLS-problem}) for values of the parameter $\MSE$ lying in a left neighborhood of $\MSEuw$. Rather than applying the implicit function theorem directly to the stationarity conditions $\nabla \mathcal{L} = 0$, whose Jacobian is singular at the critical point $(\lambda^*,\mu^*,w^*,x^*)^\top$, we consider instead the nonlinear system 
\begin{equation}
\label{sys1}
F(\lambda,\mu,w,{x}) = (F_1, F_2, F_3^\top, F_4^\top)^\top = 0,
\end{equation} 
with $F$ defined in \eqref{sys10}. This reformulation allows for a regular Jacobian and enables the application of the implicit function theorem. In a second step, we analyze the bordered Hessian of $\mathcal{L}$ to verify that the obtained stationary points indeed correspond to local solutions of problem (\ref{WLS-problem}).

\begin{theorem}
\label{theo1}
Assume that the vector $|r^*| = |Ax^*-b|$ is not constant. Then, there exist $\Delta > 0$ and unique functions $\lambda(\MSE)$, $\mu(\MSE)$, $w(\MSE)$, and $x(\MSE)$, defined on the interval $(\MSEuw - \Delta, \MSEuw)$, such that:
\begin{itemize}
    \item[(a)] $F\left(\lambda(\MSE), \mu(\MSE), w(\MSE), x(\MSE)\right) = 0$;
    \item[(b)] $\mu(\MSE) > 0$;
    \item[(c)] the critical points $\left(\lambda(\MSE), \mu(\MSE), w(\MSE)^\top, x(\MSE)^\top\right)^\top$ solve the constrained entropy maximization problem (\ref{WLS-problem}).
\end{itemize}
\end{theorem}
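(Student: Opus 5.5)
The plan is to apply the Implicit Function Theorem to $F(\lambda,\mu,w,x;\MSE)=0$ at the OLS point \eqref{critical0} with $\MSE=\MSEuw$, and then verify second-order sufficiency through the bordered Hessian. First, at $\MSE=\MSEuw$ the point \eqref{critical0} is a zero of $F$. Indeed, $w^*=u/m$ gives $F_1=0$, and $F_2=\frac1m\|r^*\|^2-\MSEuw=0$. With $\mu^*=0$ we have $F_3=\log(1/m)u+(1+\lambda^*)u=0$. Finally $F_4=A^\top W^*(Ax^*-b)=\frac1m A^\top r^*=0$ by the normal equations. Note that $F_4=A^\top W(Ax-b)$ is smooth and contains no $\mu$, since the factor $1/(2\mu)$ has been removed.

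Next I would compute the Jacobian $J$ of $F$ with respect to $(\lambda,\mu,w,x)$ at this point. The relevant blocks are:
\begin{itemize}
\item $\partial F_1=(0,0,u^\top,0)$;
\item $\partial F_2=(0,0,(r^*)^{2\top},\,2r^{*\top}W^*A)$, where $2r^{*\top}W^*A=\frac2m (A^\top r^*)^\top=0$;
\item $\partial F_3=(u,\,(r^*)^2,\,mI,\,2\mu\,\diag(r)A)$, whose last block vanishes at $\mu^*=0$;
\item $\partial F_4=(0,0,A^\top\diag(r^*),\,\frac1m A^\top A)$.
\end{itemize}
To show $J$ is nonsingular, suppose $J(\delta\lambda,\delta\mu,\delta w,\delta x)=0$. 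The $F_3$ rows give $\delta w=-\frac1m(\delta\lambda\,u+\delta\mu\,(r^*)^2)$. Substituting into the $F_1$ and $F_2$ rows yields a $2\times2$ system in $(\delta\lambda,\delta\mu)$ whose matrix is the Gram matrix of $u$ and $(r^*)^2$. This Gram matrix is nonsingular exactly when $(r^*)^2$ is not a multiple of $u$, i.e.\ when $|r^*|$ is not constant, which is the hypothesis. Hence $\delta\lambda=\delta\mu=0$, then $\delta w=0$, and the $F_4$ rows give $A^\top A\,\delta x=0$, so $\delta x=0$ by full column rank. The Implicit Function Theorem therefore gives a unique smooth branch near $\MSEuw$, defined on a two-sided interval, and we restrict it to the left part; this gives (a) together with local uniqueness.

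For (b), I would differentiate the identity along the branch at $\MSEuw$. Since $\partial F/\partial\MSE=(0,-1,0,0)$, the derivatives satisfy $J(\lambda',\mu',w',x')^\top=(0,1,0,0)^\top$. The same elimination gives $w'=-\frac1m(\lambda' u+\mu'(r^*)^2)$, together with $u^\top w'=0$ and $(r^*)^{2\top}w'=1$. Write $s=(r^*)^2$, with $\bar s=\MSEuw$ its mean. Solving, $\lambda'=-\mu'\bar s$ and $w'=-\frac{\mu'}{m}(s-\bar s u)$, so that $-\frac{\mu'}{m}\|s-\bar s u\|^2=1$. Thus $\mu'(\MSEuw)=-m/\|s-\bar su\|^2<0$. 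Since $\mu(\MSEuw)=0$, we get $\mu(\MSE)>0$ for $\MSE$ slightly below $\MSEuw$, after shrinking $\Delta$. Because $\mu\neq0$ there, $F=0$ is equivalent to $\nabla\mathcal L=0$, so these are genuine KKT points with $w>0$ by \eqref{w-expr}.

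For (c), which I expect to be the main obstacle, I would verify second-order sufficiency for the minimization of $\sum w_i\log w_i$, equivalently the maximization of entropy, over the variables $(w,x)$ with the two equality constraints. The Hessian of $\mathcal L$ in $(w,x)$ is
\[
\begin{bmatrix}\diag(1/w) & 2\mu\,\diag(r)A\\ 2\mu\,A^\top\diag(r) & 2\mu\,A^\top WA\end{bmatrix}.
\]
For $\mu>0$ small this matrix is positive definite. The $w$-block is close to $mI$, and the Schur complement is $2\mu A^\top WA-4\mu^2A^\top\diag(r)W\diag(r)A+O(\mu^2)$, which is $2\mu(A^\top WA+O(\mu))\succ0$. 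Positive definiteness on the whole space gives it on the tangent space of the constraints, so the critical points are strict local minimizers of $-H$ in $(w,x)$. Since $x$ is free, this is a local maximizer of \eqref{MEWLS} when the residual constraint is read jointly in $(w,x)$.

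The delicate point is the uniform control of the $O(\mu^2)$ terms. Because the ratio of the off-diagonal term to the $x$-block is of order $\mu$, the required estimate is $\mu\,\|\diag(r)A\|^2\|W^{-1}\|\ll \lambda_{\min}(A^\top WA)$. This holds near $\MSEuw$ by continuity, possibly after shrinking $\Delta$ again.
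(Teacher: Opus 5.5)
Your proposal is correct. For (a) and (c) it follows the paper. You get nonsingularity of $J^*$ from the Gram matrix of $u$ and $(r^*)^2$ together with $A^\top A\succ 0$. You get second-order sufficiency by showing the whole block $\widetilde{\mathcal H}$ is positive definite via its Schur complement $2\mu A^\top WA-4\mu^2A^\top\diag(r)W\diag(r)A$.

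Two small corrections on (c). This Schur complement is exact, so your extra $O(\mu^2)$ term is superfluous. In your closing estimate the natural quantity is $\|W\|$, not $\|W^{-1}\|$, although your version is still a valid (stronger) sufficient condition. Factoring out $2\mu$, as you do, is tidier than the paper's expansion with a uniform $O(\Delta^2)$ remainder, which by itself does not dominate $\frac{2\mu}{m}A^\top A$ when $\mu\ll\Delta$.

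The genuine difference is in (b).
\begin{itemize}
\item The paper differentiates the entropy along the branch to get the envelope identity $\frac{d}{d\MSE}H(w(\MSE))=\mu(\MSE)$. It then combines this with Remark~\ref{remH}, i.e.\ $H(w(\MSE))<\log m$ for $\MSE<\MSEuw$.
\item You solve the linearized system $J^*y'(\MSEuw)=e_2$ directly and find $\mu'(\MSEuw)=-m/\|s-\bar s u\|^2<0$.
\end{itemize}
Your computation is purely local, but it gives the strict sign outright. The paper's two ingredients by themselves only give $\int_{\MSE}^{\MSEuw}\mu(t)\,dt>0$. To conclude $\mu>0$ on a whole left neighbourhood they need an extra step, for instance $\mu'(\MSEuw)\neq 0$ (exactly what you compute) or analyticity of the branch.

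What the paper's route buys is the identity $\mathcal V'=\mu$ itself. It is reused in Proposition~\ref{prop:mu_monotone} to control the sign and monotonicity of $\mu$ along the entire branch, not only near $\MSEuw$.
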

\begin{proof}
In compact notation, the four components of $F$ are defined as
\begin{subequations}
\label{sys11}
\begin{align}
F_1&:= u^\top w - 1, \label{sys1a} \\[.3cm]
F_2&:= w^\top (Ax-b)^2-\MSE, \label{sys1b} \\[.3cm]
F_3&:=\log w +  (1+\lambda)u +\mu (Ax-b)^2, \label{sys1c} \\[.3cm]
F_4&:= A^\top W A x - A^\top W b. \label{sys1d} 
\end{align}
\end{subequations}
Its Jacobian is given by
\begin{equation}
\label{JF}
J(\lambda,\mu,w,x) =
\begin{pmatrix}
0 & 0               & u^\top                 & 0^\top \\
0 & 0               & {(Ax-b)^2}^\top & 2 \left(A^\top W (Ax-b) \right)^\top \\
u & (Ax-b)^2 & W^{-1}          & 2\mu \diag(Ax-b)A \\
0 & 0               & \left(\diag(Ax-b)A \right)^\top & A^\top W A
\end{pmatrix}.
\end{equation}
Evaluating this at the critical point $(\lambda^*, \mu^*, w^*, x^*)$ yields the block-triangular matrix
$$
J^* =
\begin{pmatrix}
0 & 0 & u^\top & 0^\top \\
0 & 0 & (r^*)^2{}^\top & 0 \\
u & (r^*)^2 & m I & 0 \\
0 & 0 & \left(\diag(r^*)A \right)^\top & \frac{1}{m}A^\top A
\end{pmatrix},
$$
because $\mu^*=0$, $W=\diag(w^*)=\frac{1}{m}I$, with $I$ the identity matrix of dimension $m$, and $A^\top W (Ax-b)$ vanishes at any solutions of  (\ref{sys1d}).

Since  $A$ has full column  rank, $A^\top  A$ is positive definite. Furthermore, the assumption on $r^*$ implies that $u$ and $(r^*)^2$ are linearly independent so the block $[u,\,  (r^*)^2]$ has full rank. Therefore, the $3\times 3$ leading  block-submatrix (in the upper-left corner) is non singular and so is the full Jacobian matrix $J^*$. 

By the implicit function theorem, it follows that there exists a neighborhood of $\MSEuw$ in which the solution $(\lambda, \mu, w, x)$ of (\ref{sys1}) exists, is unique, and  depends smoothly on $\MSE$.

\smallskip

We now analyze the behavior of the Lagrange multiplier $\mu=\mu(\MSE)$ in order to prove property~(b). For brevity, we omit the dependence on $\MSE$ in $x_i(\MSE)$, $w_i(\MSE)$ and $r_i(\MSE)$ throughout the following computations. From \eqref{sys1c}, any critical point of $F$ satisfies
\begin{equation}
\label{eq:stationarity_log}
-\log(w)= \mu (Ax-b)^2 + (1+\lambda)u.
\end{equation}
Since $H(w)=-\sum_i w_i\log w_i$, differentiation along the solution branch gives
\begin{equation}
\label{eq:H_derivative_2}
\frac{d}{d\MSE} H(w(\MSE))
=
-\sum_i (\log w_i +1)\, w_i'.
\end{equation}
Using~\eqref{eq:stationarity_log}, we rewrite
\[
\log w_i +1 = -\mu r_i^2 - \lambda,
\qquad r_i^2 := (Ax-b)_i^2,
\]
and therefore
\begin{equation}
\label{eq:H_derivative_3}
\frac{d}{d\MSE} H(w(\MSE))
=
\mu \sum_i r_i^2 w_i'
+
\lambda \sum_i w_i'.
\end{equation}
Differentiating the normalization constraint $w^\top u=1$ yields
\[
\sum_i w_i' = 0.
\]
Differentiating the MSE constraint $w^\top r^2=\MSE$ along the solution branch yields
\[
\sum_i w_i' r_i^2
+
\sum_i w_i (r_i^2)' = 1.
\]
Since $(r_i^2)' = 2 r_i r'_i = 2 r_i (A x')_i$, we obtain
\[
\sum_i w_i (r_i^2)'
=
2 (Ax-b)^\top W A x',
\qquad W:=\mathrm{diag}(w).
\]
The stationarity condition with respect to $x$, namely
$A^\top W (Ax-b)=0$,
implies $(Ax-b)^\top W A x' = 0$. Therefore
\[
\sum_i w_i (r_i^2)' = 0
\]
and consequently
\[
\sum_i r_i^2 w_i' = 1.
\]
Substituting into~\eqref{eq:H_derivative_3}, we obtain
\begin{equation}
\label{eq:H_derivative_final}
\frac{d}{d\MSE} H(w(\MSE)) = \mu.
\end{equation}

This identity shows that the Lagrange multiplier $\mu$ coincides with the sensitivity of the optimal entropy value with respect to the exogenous variable $\MSE$, in agreement with the envelope principle for equality-constrained optimization problems. We had already observed in Remark \ref{remH} that, as $\MSE$ decreases from $\MSEuw$, the entropy also decreases, implying $\mu(\MSE) > 0$ for $\MSE \in (\MSEuw - \Delta, \MSEuw)$, provided $\Delta$ is sufficiently small. 

\smallskip

Finally, to show property (c), we  consider the bordered Hessian $\mathcal{H}$ evaluated at a critical point  $(\lambda(\MSE),\mu(\MSE),w(\MSE),x(\MSE))$ obtained above. Observe that, taking into account the Lagrangian function definition in  (\ref{Lagrangian}), we have to show that the critical point is indeed a minimum for $-H(w)$.  The bordered Hessian of $H$ takes the form
\begin{equation}
\label{borderedH}
\mathcal{H} =
\begin{pmatrix}
0 & 0               & u^\top                 & 0^\top \\
0 & 0               & {(Ax-b)^2}^\top &  0 \\
u & (Ax-b)^2 & W^{-1}          & 2\mu \diag(Ax-b)A \\
0 & 0               & 2\mu \left(\diag(Ax-b)A \right)^\top & 2\mu A^\top W A
\end{pmatrix}
\end{equation}
where, for notational convenience, the explicit dependence of all variables on $\MSE$ has been suppressed. To conclude that the critical point corresponds to a local minimum of $- H$, it suffices to show that  the bottom-right block 
\begin{equation}
\label{Htmatrix}
\widetilde {\cal H} =
\begin{pmatrix}
 W^{-1}          & 2\mu \diag(Ax-b)A \\
2\mu \left(\diag(Ax-b)A \right)^\top & 2\mu A^\top W A
\end{pmatrix}
\end{equation}
is positive definite on the subspace orthogonal to the (linearly independent) gradients of the two constraints \cite{BlBr92,LuYe08}. We show the stronger  condition that  $\widetilde{\cal H}$ is itself positive definite for $\Delta>0$ small enough.

Since $W^{-1}$ is positive definite, the matrix $\widetilde {\cal H}$ will be positive definite if and only if the Schur complement 
\begin{equation}
\label{Smatrix}
S(\MSE)= 2\mu A^\top W A - 4\mu^2 A^\top \diag(Ax-b) W \diag(Ax-b)A
\end{equation}
is itself positive definite. In the given interval $(\MSEuw-\Delta,\MSEuw)$,  we introduce the approximations
$$
\mu(\MSE)=O(\Delta),\quad  W=\frac{1}{m} I +O(\Delta), \quad  Ax-b = r^*+O(\Delta)
$$
and deduce the following expansion of $S(\MSE)$:
$$
S(\MSE)= \frac{2\mu}{m} A^\top \left( I -  2\mu  (\diag(r^*))^2 \right) A +O(\Delta^3) = \frac{2\mu}{m} A^\top A +O(\Delta^2)
$$
which is positive definite for $\Delta$ sufficiently small due to the full column rank of $A$ and the positivity of $\mu(\MSE)$.  Thus, we conclude that the constrained entropy maximization problem admits a smooth local branch of solutions in a left neighborhood of $\MSEuw$.
\end{proof}

\section{Global continuation of the solution branch}
\label{sec_global_continuation} 
Theorem \ref{theo1}, the solution branch
$
\MSE \longmapsto (w(\MSE),x(\MSE),\mu(\MSE),\lambda(\MSE))
$
emanating from the uniform--weights configuration at $\MSE=\MSEuw$ is well defined and smooth in a left neighborhood of $\MSEuw$. We therefore consider an interval of the form
$(\MSE_{\min},\MSEuw]$ on which the branch exists, where $\MSE_{\min}\ge 0$ denotes the leftmost endpoint of existence. In the remainder of this paper, we implicitly assume that the solution branch exists for the values of $\MSE$ under consideration.
In particular, whenever asymptotic properties as $\MSE\to 0^+$ are discussed, it is understood that the branch can be continued down to arbitrarily small positive values of $\MSE$.
The purpose of the subsequent analysis is precisely to justify this continuation by ruling out possible obstructions.
Our global analysis proceeds in four steps:

\begin{enumerate}
  \item We rewrite the stationarity conditions as an autonomous initial value problem (IVP) of the form $y' = f(y)$.
  \item We study  the invertibility of the Jacobian $J$ needed to define $f$ and  the regularity of the vector field $f$. 
  \item We illustrate the possible outcomes for global continuation, namely finite breakdown or extension to $\MSE\!\downarrow\!0$, making use of standard continuation results for initial value problems.  
  \item Finally, in Section \ref{sec_asymptotic}, we provide an asymptotic analysis of the the branch as $\MSE\to 0^+$.
\end{enumerate}

Recall the nonlinear mapping $F(\lambda,\mu,w,x)$ defined in \eqref{sys11}
and its Jacobian $J(\lambda,\mu,w,x)=\nabla_{(\lambda,\mu,w,x)}F$  displayed in \eqref{JF}.

\medskip
%%%%%%%%%%%%%%%%
\subsection{IVP formulation}
\label{subsec_IVP}
%%%%%%%%%%%%%%%%
Let $y = (\lambda,\mu,w^\top,x^\top)^\top$ and consider the stationarity system
\begin{equation}
\label{Fsys1}
F(y(\MSE);\MSE) = 0,
\end{equation}
where we have emphasized the  dependence of $F$ on the parameter $\MSE$ through (\ref{sys1b}).
Differentiating the identity $F(y(\MSE);\MSE)\equiv 0$ (with  $y(\MSE)$ a solution of \eqref{Fsys1}) with respect
to the scalar parameter $\MSE$ yields
\begin{equation}
\label{diff_identity}
\nabla_y F(y(\MSE);\MSE)\, y'(\MSE) + \partial_{\MSE} F(y(\MSE);\MSE)=0.
\end{equation}
Using the explicit form of $F$ (see \eqref{sys11}), one readily checks 
\[
\partial_{\MSE} F = -(0,1,0^\top,0^\top)^\top =: -e_2.
\]
Whenever the Jacobian $J(y(\MSE)) := \nabla_y F(y(\MSE);\MSE)$ is
invertible, we may solve for $y'(\MSE)$ and obtain the final value problem
\begin{equation}
\label{eq:IVP}\left\{
\begin{array}{ll}
y'(\MSE) = f(y(\MSE)) := J(y(\MSE))^{-1} e_2, & \MSE \le \MSEuw,\\[.15cm]
y(\MSEuw) = y^*,
\end{array}\right.
\end{equation}
where $y^*$ is the OLS critical point \eqref{critical0} (initial data). Note that by introducing the reparametrization  $\MSE \rightarrow -\MSE$, we can recast \eqref{eq:IVP} as an initial value problem (IVP).
Since the literature is primarily concerned with IVPs, we will, up to this reparametrization, refer to \eqref{eq:IVP} as an IVP.

Thus, the constrained stationary problem is locally equivalent to the
autonomous  differential equation $y' = f(y)$ whose state vector evolves on the manifold
of feasible stationary points (whenever the Jacobian is invertible). If $J(y)$ is invertible and the function $f(y)$ is locally Lipschitz, then the Cauchy–Lipschitz theorem ensures local existence and uniqueness of the branch, which has been addressed in the previous section by exploiting the implicit function theorem.

We note that,  $y(\MSE)$ being a solution of \eqref{Fsys1}, we have $A^\top W (Ax(\MSE)-b)=0$, so  the structure of the Jacobian simplifies as (compare with \eqref{JF}):
\begin{equation}
\label{J}
J(y) =
\begin{pmatrix}
0 & 0               & u^\top                 & 0^\top \\
0 & 0               & r^2{}^\top             & 0^\top \\
u & r^2 & W^{-1}          & 2\mu\,\diag(r)A \\
0 & 0               & (\diag(r)A)^\top & A^\top W A
\end{pmatrix},
\end{equation}
where  $r=Ax-b$, $r^2 := (r_1^2,\dots,r_m^2)^\top$, $W=\diag(w)$.

\begin{remark}
\label{remIVP} \em
Below in Proposition \ref{prop:mu_monotone} we show the positiveness of $\mu(\MSE)$ along the branch. From this result, it follows that along any solution of \eqref{eq:IVP}, for every $\MSE<\MSEuw$, the diagonal matrix 
$$
D_\mu:=\begin{pmatrix} 
I_{2+m} \\ & 2\mu(\MSE) I_n 
\end{pmatrix},
$$ 
is nonsingular. Since $D_\mu e_2=e_2$ and $D_\mu J(y) = \mathcal{H}$,  the bordered Hessian matrix \eqref{borderedH} is invertible whenever $J(y)$ is. Consequently, setting for example $y_0=y(\MSEuw-\delta)$ with  $0<\delta<\Delta$ as in Theorem \ref{theo1}, the IVP~\eqref{eq:IVP} admits the same solution as the system 
\begin{equation}
\label{eq:IVPH}\left\{
\begin{array}{ll}
z'(\MSE) = \mathcal{H}^{-1}e_2, & \MSE \le \MSEuw-\delta,\\[.15cm]
z(\MSEuw-\delta) = y_0,
\end{array}\right.
\end{equation}
This alternative formulation has the additional advantage of allowing us to incorporate, in the subsequent analysis, sufficient conditions ensuring the positive definiteness of the matrix $\widetilde {\cal H}$, defined at \eqref{Htmatrix},  on the subspace orthogonal to the  gradients of the two constraints, as required by the second–order optimality condition, thus guaranteeing that, not only the Cauchy problem does admit a solution, but also that the stationary point corresponds to a maximum of the entropy function.
\end{remark}

\medskip
%%%%%%%%%%%%%%%%
\subsection{Invertibility of $J$ and smoothness of $f$}
\label{subsec_regularity_Jacobian}
%%%%%%%%%%%%%%%%
We first show that, indeed, $\mu(\MSE)$ is positive and strictly increasing as $\MSE$ decreases. 

\begin{lemma}
\label{lem:partition}
For $i=1,\dots,m$, the weights $w_i(\mu)$ are implicitly determined by the following Gibbs equations 
\begin{equation}
\label{w-mu}
w_i(\mu)=\frac{e^{-\mu r_i(\mu)^2}}{Z(\mu)},
\qquad \text{with }
Z(\mu)=\sum_{j=1}^m e^{-\mu r_j(\mu)^2}.
\end{equation}
\end{lemma}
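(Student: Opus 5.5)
The plan is to derive the Gibbs form directly from the stationarity equation \eqref{sys1c}, namely $F_3=0$, combined with the normalization $F_1=0$.

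Fix a point on the solution branch, parameterized either by $\MSE$ or, equivalently, by $\mu$ once Proposition~\ref{prop:mu_monotone} shows that $\mu$ is strictly monotone. At that point $F_3=0$ gives componentwise
\[
\log w_i = -(1+\lambda) - \mu r_i^2, \qquad r_i = a_i^\top x - b_i .
\]
Exponentiating yields $w_i = e^{-(1+\lambda)}\, e^{-\mu r_i^2}$, which is just \eqref{w-expr}. Summing over $i$ and using $F_1=0$, i.e. $u^\top w=1$, we obtain
\[
e^{-(1+\lambda)}\sum_{j=1}^m e^{-\mu r_j^2}=1 .
\]
Hence $e^{-(1+\lambda)} = 1/Z$, and also $\lambda = \log Z - 1$. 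Substituting back into $w_i = e^{-(1+\lambda)}\, e^{-\mu r_i^2}$ gives \eqref{w-mu}. The denominator $Z$ is a sum of positive terms, so it is strictly positive and the division is legitimate.

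The word ``implicitly'' in the statement reflects that $r_i = a_i^\top x(\mu) - b_i$ depends on $w$ through the normal equations $F_4=0$. I will therefore remark that \eqref{w-mu} is a fixed-point relation coupled with $A^\top W A x = A^\top W b$, not an explicit formula.

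The only delicate point is the parameterization by $\mu$ rather than by $\MSE$. If Proposition~\ref{prop:mu_monotone} is not yet available at this stage, I will state the identity along the branch as a function of $\MSE$, with $\mu=\mu(\MSE)$ and $r=r(\MSE)$. The reparameterization by $\mu$ is then justified afterwards by the monotonicity of $\mu$. Beyond this, there is no real obstacle: the algebra is immediate.
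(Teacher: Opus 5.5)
Your proposal is correct and follows the paper's proof: exponentiate the stationarity condition to obtain \eqref{w-expr}, sum over $i$ using $u^\top w=1$ to get $e^{1+\lambda}=Z$, and substitute back. Your caution about parametrizing by $\mu$ is well placed but harmless, since the identity holds pointwise at every stationary point. It is also right not to lean on Proposition~\ref{prop:mu_monotone} here, because the paper uses this lemma inside that proposition's proof.
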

\begin{proof}
Relations \eqref{w-mu} come from \eqref{w-expr} after removing the unknown $\lambda$. In fact, multiplying both sides of \eqref{w-expr} by $u^\top$ and exploiting the normalization condition on the weights \eqref{normalization} yield
$$
\exp(1+\lambda)=u^\top \exp(-\mu r(\mu)^2) = Z(\mu).
$$
Substituting this expression for $\exp(1+\lambda)$ back into \eqref{w-expr} leads to \eqref{w-mu}.
\end{proof}

\begin{proposition}[Monotonicity and positivity of $\mu(E)$]
\label{prop:mu_monotone}
Let $A\in\mathbb R^{m\times n}$ have full column rank and let
$r(x)=Ax-b$.
For each $E\in(0,E_{\mathrm{uw}}]$, consider the entropy maximization problem
\begin{equation}
\label{eq:value_problem}
\mathcal V(E)
=
\sup_{x\in\mathbb R^n,\; w\in(0,\infty)^m}
\left\{
H(w)
\;\middle|\;
\sum_{i=1}^m w_i r_i(x)^2 = E,
\quad
\sum_{i=1}^m w_i = 1
\right\},
\end{equation}
where $H(w)=-\sum_{i=1}^m w_i\log w_i$.
Assume that, for every $E\in(\MSEmin,\MSEuw]$, there exists a smooth branch
\[
E \longmapsto (x(E),w(E),\mu(E),\lambda(E))
\]
satisfying the Karush--Kuhn--Tucker conditions associated with
\eqref{eq:value_problem}.
Then the following statements hold:
\begin{enumerate}
\item the value function $\mathcal V(E)$ is strictly concave;
\item $\mu(E)>0$ for all $E\in(0,E_{\mathrm{uw}})$;
\item $\mu(E)$ is strictly decreasing in $E$; equivalently,
$\mu(E)$ is strictly increasing as $E$ decreases;
\item if the branch extends down to $E\to0^+$, then $\mu(E)\to+\infty$.
\end{enumerate}
\end{proposition}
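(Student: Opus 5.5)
The plan is to base everything on two facts already available: the envelope identity \eqref{eq:H_derivative_final}, $\frac{d}{d\MSE}H(w(\MSE))=\mu(\MSE)$, and the IVP representation of the branch from Section~\ref{subsec_IVP}. The derivation of \eqref{eq:H_derivative_final} in the proof of Theorem~\ref{theo1} only used the stationarity equations \eqref{sys11} along a smooth branch. It therefore holds on the whole interval $(\MSEmin,\MSEuw]$. Identifying $\mathcal V(\MSE)$ with the entropy along the branch, which is the local value function, we get $\mathcal V'(\MSE)=\mu(\MSE)$. So item~1 reduces to item~3, namely $\mu'(\MSE)<0$. Item~2 then follows from item~3 together with $\mu(\MSEuw)=\mu^*=0$ from \eqref{critical0}.

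\textbf{Sign of $\mu'$.} From \eqref{eq:IVP}, $\mu'=e_2^\top J^{-1}e_2$. As in Remark~\ref{remIVP}, $\mathcal H=D_\mu J$ and $D_\mu e_2=e_2$, so $J^{-1}e_2=\mathcal H^{-1}D_\mu e_2=\mathcal H^{-1}e_2$. Hence
\[
\mu'(\MSE)=e_2^\top\mathcal H^{-1}e_2 .
\]
The bordered Hessian \eqref{borderedH} is symmetric with the saddle-point structure $\begin{pmatrix}0&B^\top\\ B&\widetilde{\mathcal H}\end{pmatrix}$, where $B=\begin{pmatrix}u& r^2\\ 0&0\end{pmatrix}$. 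The matrix $B$ has full column rank as long as $r^2$ is not parallel to $u$, which is the nondegeneracy assumption carried along the branch. If $\widetilde{\mathcal H}$ is positive definite, block elimination gives that the leading $2\times2$ block of $\mathcal H^{-1}$ equals $-(B^\top\widetilde{\mathcal H}^{-1}B)^{-1}$, which is negative definite. In that case $\mu'<0$ directly.

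If only the second-order condition is available, that is, $\widetilde{\mathcal H}$ positive definite on $\ker B^\top$, I would argue as follows. Write $\mathcal H^{-1}e_2=(p,q)$, so that $Bp+\widetilde{\mathcal H}q=0$ and $B^\top q=e_2$. Then
\[
\mu'=p_2=p^\top B^\top q=-q^\top\widetilde{\mathcal H}q .
\]
To see that this is negative, I would use the standard regularization $\widetilde{\mathcal H}+\rho BB^\top$. This modification is positive definite for $\rho$ large, and replacing $\widetilde{\mathcal H}$ by it leaves the $(2,2)$ entry of the inverse unchanged up to the shift $-\rho$ on the leading block. This gives $\mu'<0$. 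Consequently $\mathcal V''=\mu'<0$, which yields items~1 and~3, and item~2 follows since $\mu$ starts from $0$ at $\MSEuw$.

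\textbf{Item~4.} Since $\mu$ is increasing as $\MSE\downarrow0$, it has a limit $M\in(0,\infty]$. Suppose $M<\infty$. By Lemma~\ref{lem:partition}, $w_i\ge e^{-M r_i^2}/m$, and therefore
\[
\MSE\ \ge\ \frac1m\sum_i r_i^2e^{-Mr_i^2}.
\]
As $\MSE\to0$, each $r_i^2$ must tend either to $0$ or to $\infty$. If all residuals remained bounded, then $r\to0$, which forces $b\in\mathrm{range}(A)$. That case is trivial, because then $\MSEuw=0$. This is where I expect the main obstacle: excluding residuals that escape to infinity while the weights concentrate on a consistent subset. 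My first attempt would combine the weighted normal equations $A^\top W r=0$ with the Gibbs form of the weights. If some $r_i\to\infty$, then $x$ is unbounded, and I would normalize $x/\|x\|$ to extract a direction $d$ with $Ad$ supported on the rows where the residuals diverge. I would then try to show that such unbounded configurations are incompatible with a smooth branch whose $\mu$ stays bounded, for instance by bounding $\|x'\|$ through the $x$-block of $\mathcal H^{-1}e_2$ uniformly in $\MSE$ when $M<\infty$.
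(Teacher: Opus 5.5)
The proposal has two genuine gaps. The first is in items 1--3. Your second-order-only case fails. The regularized system is solved by $(p-\rho e_2,q)$, so negative definiteness of its leading block gives only $\mu'-\rho<0$, i.e.\ $\mu'<\rho$. Equivalently, $\mu'=\rho-q^\top(\widetilde{\mathcal H}+\rho BB^\top)q$, which has no sign. The reason is that $B^\top q=e_2\neq0$, so $q\notin\ker B^\top$, and the second-order condition says nothing about $q^\top\widetilde{\mathcal H}q$.

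This cannot be repaired. Eliminating $\eta$ from $J(\alpha,\beta,\gamma^\top,\eta^\top)^\top=e_2$ gives $\mu'=-\gamma^\top K\gamma$, where $K=W^{-1}-2\mu\diag(r)A(A^\top WA)^{-1}A^\top\diag(r)$. Here $\gamma$ minimizes $\gamma^\top K\gamma$ on the slice $u^\top\gamma=0$, $(r^2)^\top\gamma=1$, while the second-order condition is $K\succ0$ on $\{u,r^2\}^\perp$. Since $A^\top Wr=0$ gives $Kw=u$, one has $(\gamma+tw)^\top K(\gamma+tw)=\gamma^\top K\gamma+t^2$ for $\gamma\perp u$. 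Hence, under the second-order condition, $\mu'<0$ holds if and only if $K\succ0$, i.e.\ $\widetilde{\mathcal H}\succ0$, i.e.\ condition (b) of Proposition~\ref{prop:J-invertible}.

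So your case $\widetilde{\mathcal H}\succ0$ is an extra hypothesis your route genuinely needs. You argue that case correctly. It also covers $E=E_{\mathrm{uw}}$ if you compute with $J$ rather than $\mathcal H$, since there $\mu'=-m\|\gamma\|^2<0$.

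With that hypothesis, your route differs from the paper's. The paper gets concavity from weak duality and strictness from an asserted uniqueness of the Gibbs/normal system at fixed $\bar\mu$; you get $\mu'<0$ pointwise. However, the paper's claim that $(x(E),w(E))$ maximizes $\mathcal L(\cdot,\cdot,\mu(E),\lambda(E);E)$ is not implied by the KKT conditions either, since it forces $\widetilde{\mathcal H}\succeq0$. That claim is precisely what identifies $H(w(E))$ with the global supremum $\mathcal V(E)$. You assume this identification outright, so what you actually prove is strict concavity of $E\mapsto H(w(E))$.

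The second gap is item 4, which is left open, and your worry is legitimate. The paper's one-line argument (some $r_k$ stays away from $0$, so $w_kr_k^2\to0$ forces $\mu\to\infty$) tacitly assumes that $r_k$ stays bounded. The escaping case is excluded by the normal equations alone, with no estimate on $x'$.

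Take $E_k\downarrow0$ with $0<\mu\le M$. Pass to a subsequence on which $r_i\to0$ for $i\in S$ and $|r_i|\to\infty$ for $i\notin S$, where $S^c\neq\emptyset$ (otherwise $b\in\mathrm{range}(A)$). Write $x=x_1+x_2$ with $x_2\in\ker A_S$ and $x_1\perp\ker A_S$, taking $\ker A_S=\mathbb R^n$ if $S=\emptyset$. Then $x_1$ is bounded because $A_Sx_1=b_S+r_S$ and $A_S$ is injective on $(\ker A_S)^\perp$. Testing $A^\top Wr=0$ against $x_2$ gives
\[
0=\sum_{j\notin S}w_j\,(a_j^\top x_2)\,r_j,\qquad a_j^\top x_2=r_j-(a_j^\top x_1-b_j)=r_j+O(1).
\]
Every term is therefore eventually positive, which is a contradiction. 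Hence $\mu$ is unbounded and, being monotone, tends to $+\infty$.
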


\begin{proof}
For fixed $E>0$, consider the Lagrangian\footnote{This Lagrangian differs from \eqref{Lagrangian} by an overall sign, reflecting the change from  a minimization to a maximization formulation. Since it is only used within the present proof, we retain the same notation.}
  
\[
\mathcal L(w,x,\mu,\lambda;E)
=
H(w)
+
\mu\!\left(E-\sum_{i=1}^m w_i r_i(x)^2\right)
+
\lambda\!\left(1-\sum_{i=1}^m w_i\right),
\]
with multipliers $\mu,\lambda\in\mathbb R$.
Define the associated dual function
\[
g(\mu,\lambda;E)
:=
\sup_{x\in\mathbb R^n,\; w\in(0,\infty)^m}
\mathcal L(w,x,\mu,\lambda;E).
\]
For every $(\mu,\lambda)$ and every feasible $(x,w)$ satisfying the constraints in \eqref{eq:value_problem}, one has
$\mathcal L(w,x,\mu,\lambda;E)=H(w)$, and therefore
\[
\mathcal V(E)\le g(\mu,\lambda;E).
\]
Taking the infimum over all multipliers yields
\begin{equation}
\label{eq:weak_duality}
\mathcal V(E)\le \inf_{\mu,\lambda} g(\mu,\lambda;E).
\end{equation}

By assumption, for each $E$ there exists a quadruple
$(x(E),w(E),\mu(E),\lambda(E))$ satisfying the KKT conditions.
In particular, $(x(E),w(E))$ maximizes $\mathcal L(\cdot,\cdot,\mu(E),\lambda(E);E)$,
 hence
\[
g(\mu(E),\lambda(E);E)
=
\mathcal L(w(E),x(E),\mu(E),\lambda(E);E)
=
H(w(E))
=
\mathcal V(E).
\]
Combining this identity with \eqref{eq:weak_duality}, we obtain 
\begin{equation}
\label{eq:value_dual_representation}
\mathcal V(E)=\inf_{\mu,\lambda} g(\mu,\lambda;E).
\end{equation}
Now, for every fixed pair $(\mu,\lambda)$, the function $g(\mu,\lambda;E)$ depends
affinely on $E$, since
\[
g(\mu,\lambda;E)
=
\mu E
+
\sup_{x,w}
\Bigl(
H(w)
-
\mu\sum_{i=1}^m w_i r_i(x)^2
+
\lambda(1-\sum_{i=1}^m w_i)
\Bigr).
\]
Being the infimum of a family of affine functions, $\mathcal V(E)$ is therefore
concave on $(0,E_{\mathrm{uw}}]$. In fact, let $E_1,E_2\in(0,E_{\mathrm{uw}}]$ and let $\theta\in[0,1]$.
Set
\[
E_\theta := \theta E_1 + (1-\theta)E_2.
\]
For every fixed pair $(\mu,\lambda)$, the dual function has the form
\[
g(\mu,\lambda;E)
=
\mu E + c(\mu,\lambda),
\]
where
\[
c(\mu,\lambda)
=
\sup_{x\in\mathbb R^n,\; w\in(0,\infty)^m}
\Bigl(
H(w)
-
\mu\sum_{i=1}^m w_i r_i(x)^2
+
\lambda(1-\sum_{i=1}^m w_i)
\Bigr)
\]
does not depend on $E$.
Hence $g(\mu,\lambda;E)$ is affine in $E$,  therefore
\[
g(\mu,\lambda;E_\theta)
=
\theta g(\mu,\lambda;E_1)
+
(1-\theta) g(\mu,\lambda;E_2)
\qquad
\forall\,(\mu,\lambda).
\]
Taking the infimum with respect to $(\mu,\lambda)$ on both sides yields
\[
\mathcal V(E_\theta)
=
\inf_{\mu,\lambda} g(\mu,\lambda;E_\theta)
=
\inf_{\mu,\lambda}
\Bigl(
\theta g(\mu,\lambda;E_1)
+
(1-\theta) g(\mu,\lambda;E_2)
\Bigr).
\]
Using the elementary inequality
\[
\inf_{z} \bigl(\theta a_z + (1-\theta)b_z\bigr)
\;\ge\;
\theta \inf_{z} a_z + (1-\theta)\inf_{z} b_z,
\]
valid for any real families $\{a_z\},\{b_z\}$ and $\theta\in[0,1]$, we obtain
\[
\mathcal V(E_\theta)
\ge
\theta \inf_{\mu,\lambda} g(\mu,\lambda;E_1)
+
(1-\theta)\inf_{\mu,\lambda} g(\mu,\lambda;E_2).
\]
Recalling the definition of $\mathcal V$, this implies
\[
\mathcal V(\theta E_1 + (1-\theta)E_2)
\ge
\theta \mathcal V(E_1) + (1-\theta)\mathcal V(E_2),
\]
which proves that $\mathcal V$ is concave.

We now prove strict concavity of $\mathcal V$. As established previously (see \eqref{eq:H_derivative_final}), along the smooth
KKT branch one has the envelope identity
\[
\mathcal V'(E)=\mu(E)
\]
Assume, by contradiction, that $\mu(E)=\mathcal V'(E)=\bar\mu$ for all $E\in[E_1,E_2]$, so $\mathcal V$ is concave but not strictly concave on  $[E_1,E_2]$. From Lemma \ref{lem:partition}, for each such $E$, any  optimal triple $(w(E),x(E),\mu(E))$
satisfies  the Gibbs equations 
\[
w_i(E)
=
\frac{\exp\bigl(-\bar\mu\, r_i(x(E))^2\bigr)}
{\sum_{j=1}^m \exp\bigl(-\bar\mu\, r_j(x(E))^2\bigr)},
\]
together with the normal equations
\[
A^\top W(E)A\,x(E)=A^\top W(E)b.
\]
These equations form a coupled system that depends only on $\bar\mu$ and
does not involve the parameter $E$ explicitly.
Under the standing assumptions, this system admits at most one solution
$(\bar w,\bar x)$.
Hence $(w(E),x(E))=(\bar w,\bar x)$ for all $E\in[E_1,E_2]$. Since the constraint is active along the branch, one has
\[
E = w(E)^\top r(x(E))^2.
\]
Using $(w(E),x(E))=(\bar w,\bar x)$ for all $E\in[E_1,E_2]$, we obtain
\[
E
=
\bar w^\top r(\bar x)^2
=: \bar E,
\qquad
\forall E\in[E_1,E_2],
\]
which contradicts the assumption $E_1<E_2$. Therefore $\mu(E)$ cannot be constant on any nontrivial interval,
and consequently $\mathcal V$ is strictly concave.

Since $\mathcal V$ is strictly concave, its derivative $\mathcal V'(E)=\mu(E)$ is
monotone strictly decreasing in $E$. Moreover, $\mu(E_{\mathrm{uw}})=0$ corresponds to the unconstrained (uniform--weight) solution, hence $\mu(E)>0$ and $\mu'(E)<0$ for all $E\in(E_{\mathrm{min}},E_{\mathrm{uw}})$.

\smallskip  If $E_{\mathrm{min}}=0$ and  $E\to0^+$ along the branch, then at least one residual component
$r_k(x(E))^2$ remains bounded away from zero (unless an exact interpolation
occurs, which is excluded). Since
\[
w_k(E)=\frac{e^{-\mu(E) r_k(x(E))^2}}{\sum_j e^{-\mu(E) r_j(x(E))^2}},
\]
the condition $w_k(E) r_k(x(E))^2\to0$ forces $\mu(E)\to+\infty$. This concludes the proof.
\end{proof}

The IVP \eqref{eq:IVP} is meaningful only where $J$ is invertible and
the right-hand side is sufficiently regular. We therefore study the domain
on which $F$ is smooth and identify the algebraic condition that ensures
invertibility of $J$.

Let $w = (w_1,\dots,w_m)^\top$ and denote by
\[
\Delta_m^\circ := \{\, w\in\RR^m : w_i>0\ \forall i,\ \sum_{i=1}^m w_i = 1 \,\}
\]
the interior of the probability simplex. 
\begin{proposition}
\label{prop:F-smooth}
Set 
\begin{equation}
\label{D-set}
\mathcal{D} := \RR\times\RR\times\Delta_m^\circ\times\RR^n.
\end{equation}
The mapping
\[
F: \; \mathcal{D} \;\to\; \RR^{2+m+n},
\qquad (\lambda,\mu,w^\top,x^\top)^\top\mapsto F(\lambda,\mu,w,x),
\]
is $C^\infty$. In particular all partial derivatives appearing in the
Jacobian $J$ are continuous on the open set $\mathcal{D}$.

\end{proposition}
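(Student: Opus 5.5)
The plan is to verify smoothness component by component, using the explicit expressions \eqref{sys1a}--\eqref{sys1d}, together with the facts that sums, products and compositions of $C^\infty$ maps are $C^\infty$. First I would settle the domain. Since $\Delta_m^\circ$ lies inside the affine hyperplane $u^\top w=1$, it is not open in $\RR^m$. So I would really work on the open set $\widetilde{\mathcal D}:=\RR\times\RR\times(0,\infty)^m\times\RR^n$, which contains $\mathcal D$, and prove smoothness there. Restricting to $\mathcal D$, or reading the Jacobian \eqref{JF} as the ambient derivative evaluated at points of $\mathcal D$, then gives the claim. This is also how $J$ is used in the IVP \eqref{eq:IVP}, where the full Jacobian with respect to $(\lambda,\mu,w,x)$ enters.

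Next I would treat the components in turn. $F_1=u^\top w-1$ is affine in $w$. For $F_2$, the map $x\mapsto r=Ax-b$ is affine and $r\mapsto r^2$ (Hadamard square) is polynomial, so $F_2=w^\top r^2-\MSE$ is a polynomial in $(w,x)$. It is also affine in the parameter $\MSE$, which matters because $F$ is regarded as $F(y;\MSE)$. $F_4=A^\top\diag(w)(Ax-b)$ is bilinear in $(w,x)$, hence polynomial. In $F_3=\log w+(1+\lambda)u+\mu(Ax-b)^2$, the terms $(1+\lambda)u$ and $\mu(Ax-b)^2$ are polynomial. The only nonpolynomial term is the componentwise logarithm $w_i\mapsto\log w_i$, which is $C^\infty$ (indeed real-analytic) exactly on $w_i>0$. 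This is the step that uses the positivity built into $\mathcal D$. Its derivatives, $\partial^k_{w_i}\log w_i=(-1)^{k-1}(k-1)!\,w_i^{-k}$, are continuous there, and in particular the block $W^{-1}$ appearing in $J$ is continuous.

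Finally, every entry of $J$ in \eqref{JF} is either polynomial in $(\lambda,\mu,w,x)$ or of the form $w_i^{-1}$, so all entries are continuous on the open set. I do not expect a genuine obstacle here. The only delicate point is the technical remark about openness of $\Delta_m^\circ$, which the enlargement to $\widetilde{\mathcal D}$ resolves. It is also worth noting in passing that smoothness fails at the boundary $w_i\to0^+$, where $W^{-1}$ blows up, foreshadowing the breakdown scenarios discussed in the global continuation.
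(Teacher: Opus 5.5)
Your proposal is correct and follows the paper's own argument. Both verify that $F$ is assembled from affine and polynomial expressions in $(\lambda,\mu,w,x)$ together with the componentwise logarithm, which is $C^\infty$ wherever $w_i>0$. Your extra observation is also sound: $\Delta_m^\circ$ is not open in $\RR^m$, so working on $\RR\times\RR\times(0,\infty)^m\times\RR^n$, where the Jacobian $J$ is the genuine full derivative, repairs the paper's loose description of $\mathcal{D}$ as an open set.
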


\begin{proof}
The function $F$ (see \eqref{sys11}) is composed by polynomials, the map $w\mapsto \log w$, which is $C^\infty$ on $\Delta_m^\circ$), and linear algebra
operations involving $A$. 
\end{proof}

The following result isolates a convenient sufficient condition for
invertibility of $J$. The proof  relies only on the
Schur complement lemma.
\begin{proposition}[Sufficient conditions for invertibility of $J$]
\label{prop:J-invertible}
Let $y=(\lambda,\mu,w^\top,x^\top)^\top$ be such that $\mu>0$ and $w\in\Delta_m^\circ$. Denote by $V:=\mathrm{Range}(W^{1/2}A)\subseteq\RR^m$ the column space of $W^{1/2}A$,
and by $C:=\mathrm{span}\{u,r^2\}\subset\RR^m$ the space generated by the two constraint gradients $u$ and $r^2$. Assume the following:
\begin{itemize}
  \item[(a)] the two vectors $u$ and $r^2$ are linearly independent (equivalently, $r^2$ is not a constant vector);
  \item[(b)] the matrix 
\begin{equation}
\label{Bmatrix}
B(y):= I_m -2 \mu \diag(r^2)
\end{equation}
is positive definite on $V$, i.e.
\begin{equation}  
\label{B-pos}
v^\top B(y)\,v > 0, \quad\mbox{for all } v\in V\setminus\{0\}.
\end{equation}
\end{itemize}
Then  $J(y)$ is nonsingular and matrix $\widetilde {\cal H}$ defined at \eqref{Htmatrix} is positive definite.
\end{proposition}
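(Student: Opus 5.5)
We first show that $\widetilde{\mathcal H}$ is positive definite, and then use this to prove that $J(y)$ is nonsingular. The positive definiteness follows from the Schur complement lemma, as in the proof of Theorem~\ref{theo1}. Since $w\in\Delta_m^\circ$, the block $W^{-1}$ is positive definite. Hence $\widetilde{\mathcal H}$ is positive definite if and only if the Schur complement
\[
S = 2\mu A^\top W A - 4\mu^2 A^\top \diag(r) W \diag(r) A
\]
is positive definite. Using $W\diag(r)^2 = W^{1/2}\diag(r^2)W^{1/2}$, we rewrite it as
\[
S = 2\mu\, (W^{1/2}A)^\top \bigl(I_m - 2\mu\diag(r^2)\bigr)(W^{1/2}A) = 2\mu\,(W^{1/2}A)^\top B(y)\,(W^{1/2}A).
\]
For $z\in\RR^n\setminus\{0\}$, set $v=W^{1/2}Az$. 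Then $v\in V$, and $v\neq 0$ because $A$ has full column rank and $W^{1/2}$ is nonsingular. By (b) and $\mu>0$ we get $z^\top S z = 2\mu\, v^\top B(y) v>0$, so $\widetilde{\mathcal H}\succ 0$.

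For the nonsingularity of $J(y)$, we pass to the bordered Hessian. Remark~\ref{remIVP} gives $D_\mu J(y)=\mathcal H$ with $D_\mu$ nonsingular since $\mu>0$, so it suffices to show that $\mathcal H$ is nonsingular. Write
\[
\mathcal H = \begin{pmatrix} 0 & G^\top \\ G & \widetilde{\mathcal H}\end{pmatrix},
\qquad
G = \begin{pmatrix} u & r^2 \\ 0 & 0\end{pmatrix}\in\RR^{(m+n)\times 2}.
\]
Because $\widetilde{\mathcal H}$ is invertible, $\mathcal H$ is nonsingular if and only if the $2\times 2$ Schur complement $-G^\top\widetilde{\mathcal H}^{-1}G$ is nonsingular. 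By (a), $G$ has full column rank, and $\widetilde{\mathcal H}^{-1}$ is positive definite. Therefore $G^\top\widetilde{\mathcal H}^{-1}G$ is positive definite, hence nonsingular.

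The only step needing care is the factorization of $S$ through $W^{1/2}A$, which turns hypothesis (b) into positivity of $S$. One should also check that the off-diagonal blocks of $\widetilde{\mathcal H}$ in \eqref{Htmatrix} are exactly as used, namely $2\mu\diag(r)A$ and its transpose. The diagonal-scaling link between $J$ and $\mathcal H$ is already supplied by Remark~\ref{remIVP}.
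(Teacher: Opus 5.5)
Your proof is correct. The positive-definiteness half is the paper's argument. The paper factors $\widehat S = A^\top W A - 2\mu A^\top \diag(r) W \diag(r) A$ as $(W^{1/2}A)^\top B(y)(W^{1/2}A)$ and then observes $S = 2\mu\widehat S$, whereas you factor $S$ directly.

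For the nonsingularity of $J(y)$, the paper does not pass through $\mathcal H$ or a bordered Schur complement; it argues on the kernel. If $J(y)(\alpha,\beta,\gamma^\top,\eta^\top)^\top=0$, the first two block rows give $u^\top\gamma=(r^2)^\top\gamma=0$. Multiplying the third block row by $\gamma^\top$ and the fourth by $2\mu\eta^\top$ and adding gives $(\gamma^\top,\eta^\top)\widetilde{\mathcal H}(\gamma^\top,\eta^\top)^\top=0$. Hence $\gamma=0$ and $\eta=0$, and (a) then forces $\alpha=\beta=0$. The factor $2\mu$ there is your $D_\mu$ scaling done inline.

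The trade-off is as follows. The kernel argument only uses positivity of $\widetilde{\mathcal H}$ on vectors with $\gamma\perp u$ and $\gamma\perp r^2$, i.e.\ on the tangent space of the constraints. It would therefore still give invertibility of $J$ under the weaker second-order sufficient condition alone. Your route needs $\widetilde{\mathcal H}$ to be invertible on all of $\RR^{m+n}$ in order to form $\widetilde{\mathcal H}^{-1}$, which you do have here. In exchange, your route yields the explicit factorization $\det\mathcal H=\det\widetilde{\mathcal H}\,\det(G^\top\widetilde{\mathcal H}^{-1}G)$. Hence $\det J(y)=(2\mu)^{-n}\det\mathcal H>0$ wherever (a) and (b) hold, which is slightly more information.

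Both proofs rely on the simplified Jacobian \eqref{J}, which is valid where $A^\top W r=0$. The identity $D_\mu J(y)=\mathcal H$ you take from Remark~\ref{remIVP} holds only for that form, so this is no extra restriction compared with the paper.
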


\begin{proof} With reference to \eqref{J},  we first  show that the $2\times 2$ block matrix
$$
\widetilde J:=
\begin{pmatrix}
 W^{-1}          & 2\mu\,\diag(r)A \\
(\diag(r)A)^\top & A^\top W A
\end{pmatrix}
$$
is nonsingular, which  is tantamount to the invertibility of the Schur complement of matrix $W^{-1}$ in $\widetilde J$:
\begin{equation}
\label{Shmatrix}
\widehat S =  A^\top W A - 2\mu A^\top \diag(r) W \diag(r)A.
\end{equation}
We have
$$
\begin{array}{rcl}
\widehat S &=&  A^\top  W^{1/2} W^{1/2} A - 2\mu A^\top W^{1/2} \diag(r^2) W^{1/2} A \\[.15cm]
           &=&  (W^{1/2}A)^\top   (W^{1/2}A) - 2\mu (W^{1/2}A)^\top  \diag(r^2) (W^{1/2} A) \\[.15cm]
           &=&  (W^{1/2}A)^\top   \left( I_m - 2\mu  \diag(r^2)\right)  (W^{1/2} A) \\[.15cm]
           &=&  (W^{1/2}A)^\top   B(y)  (W^{1/2} A).       
\end{array}
$$
For any $z\in \RR^n\setminus\{0\}$, we have  $v:= (W^{1/2} A)z \not =0$, since $A$ has full column rank. Therefore, due to assumption (b), $\widehat S$ is positive definite and hence nonsingular. 

A comparison between matrices $\widehat S$ in \eqref{Shmatrix} and $S$ in \eqref{Smatrix} reveals that $S=2\mu \widehat S$. Since $\mu>0$, it follows that $S$ is positive definite and so is matrix $\widetilde {\cal H}$ defined in \eqref{Htmatrix}.

Now suppose $J(y)(\alpha,\beta,\gamma^\top,\eta^\top)^\top = 0$ for
scalars $\alpha,\beta$ and vector $\gamma\in\RR^m$ and $\eta \in \RR^n$. Writing the
four block-rows of this equality yields:
\begin{equation}
\label{four-equations}
\begin{array}{rl}
u^\top \gamma &= 0,\\[.15cm]
r^2{}^\top \gamma &= 0,\\[.15cm]
\alpha u + \beta r^2 + W^{-1}\gamma  + 2\mu \diag(r)A \eta &= 0, \\[.15cm]
(\diag(r)A)^\top \gamma  + A^\top W A \eta &= 0.
\end{array}
\end{equation}
Left-multiplying the third equation by $\gamma^\top$ and using the
first two gives 
\begin{equation}
\label{third-eq}
\gamma^\top W^{-1} \gamma +2\mu \gamma^\top (\diag(r) A) \eta =0.
\end{equation}
Analogously, left-multiplying the fourth equation by $2\mu \eta^\top$ gives 
\begin{equation}
\label{fourth-eq}
2\mu \eta^\top (\diag(r) A)^\top \gamma +2\mu \eta^\top A^\top W A \eta =0.
\end{equation}
Summing up \eqref{third-eq} and \eqref{fourth-eq} yields $(\gamma^\top,\eta^\top){\widetilde {\cal H}} (\gamma^\top,\eta^\top)^\top=0$ and, since  $\widetilde {\cal H}$ is positive definite, this implies $\gamma=0$ and $\eta=0$.
Consequently, the third equation in \eqref{four-equations} reduces to $\alpha  u+ \beta r^2=0$,
and by (a) this forces $\alpha=\beta=0$. Thus the kernel of $J(y)$
is trivial which means that $J(y)$ is invertible. 
\end{proof}

\subsection{Global continuation results}
\label{subsec_global_continuation}

The local analysis in Section~\ref{sec_local_results} guarantees the existence and uniqueness of a smooth solution branch $y(\MSE)$ of the stationarity system \eqref{Fsys1} for $\MSE$ sufficiently close to $\MSEuw$. In this section, we extend the discussion to the entire admissible range of the parameter $\MSE$, and characterize the possible behaviors of the branch as $\MSE$ decreases. The key observation is that, as long as the Jacobian $J(y)$ remains invertible and the vector field $f(y) = J(y)^{-1} e_2$ stays smooth within the open domain $\mathcal{D}$ defined in \eqref{D-set}, the solution can be prolonged by standard continuation arguments for autonomous differential equations. Conversely, the continuation may fail only if the solution leaves every compact subset of $\mathcal{D}$, or if $J(y)$ loses invertibility at a finite value of $\MSE$. The following theorem formalizes this dichotomy.

\begin{theorem}
\label{thm:max_interval}
Let $y(\MSE)$ be the solution of the IVP \eqref{eq:IVP} with initial condition $y(\MSEuw)=y^*$, defined on its maximal interval of existence $(\MSE_{\min},\MSEuw]$.
Then exactly one of the following alternatives holds:
\begin{enumerate}
  \item \emph{Finite breakdown:} $\MSE_{\min}>0$ (the solution cannot be continued below some positive value $\MSE_{\min}$);
  \item \emph{Extension to zero:} $\MSE_{\min}=0$ (the solution extends uniquely to $(0,\MSEuw]$).
\end{enumerate}
If finite breakdown occurs at $\MSE_{\min}>0$, then one of the following mutually non-exclusive behaviors must take place as $\MSE\downarrow \MSE_{\min}$:
\begin{enumerate}
  \item[(i)] $\|y(\MSE)\|\to+\infty$ (escape to infinity);
  \item[(ii)] $y(\MSE)$ remains bounded but approaches the boundary of $\mathcal D$, i.e.\ some weight $w_i(\MSE)\to 0$;
  \item[(iii)] $y(\MSE)$ remains bounded and inside $\mathcal D$ but $J(y(\MSE))$ becomes singular.
\end{enumerate}
\end{theorem}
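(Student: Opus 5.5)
The plan is to treat \eqref{eq:IVP}, after the reparametrization $\MSE\to-\MSE$, as an autonomous ODE $y'=f(y)=J(y)^{-1}e_2$ posed on the open set
\[
\mathcal U:=\{\,y\in\mathcal D:\ J(y)\ \text{is invertible}\,\}.
\]
Then we apply the standard maximal-existence (``escape from compacta'') theorem for ODEs with locally Lipschitz right-hand side.

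The first step is to check that $\mathcal U$ is open and that $f$ is $C^\infty$ on it. By Proposition \ref{prop:F-smooth} the entries of $J$ are $C^\infty$ on $\mathcal D$, so $\det J$ is continuous and $\mathcal U$ is open. Cramer's rule then gives smoothness of $J^{-1}$, hence of $f$, on $\mathcal U$; in particular $f$ is locally Lipschitz. Theorem \ref{theo1} (equivalently $J^*$ nonsingular) shows $y^*\in\mathcal U$. The Cauchy--Lipschitz theorem therefore yields a unique maximal solution on an interval $(\MSE_{\min},\MSEuw]$.

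The first dichotomy is essentially tautological once we restrict to the admissible parameter range $\MSE>0$. This restriction is natural because the constraint $w^\top r^2=\MSE$ with $w\in\Delta_m^\circ$ forces $\MSE\ge 0$, and the branch is only considered for $\MSE\in(0,\MSEuw]$. Hence either $\MSE_{\min}>0$ or $\MSE_{\min}=0$. Uniqueness of the extension in the second case follows from the uniqueness part of Cauchy--Lipschitz.

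For the breakdown alternatives, suppose $\MSE_{\min}>0$. The continuation theorem states that $y(\MSE)$ eventually leaves every compact subset $K\subset\mathcal U$ as $\MSE\downarrow\MSE_{\min}$. The natural choice of exhaustion is
\[
K_k=\{\,y\in\mathcal D:\ \|y\|\le k,\ \min_i w_i\ge 1/k,\ |\det J(y)|\ge 1/k\,\}.
\]
Assume by contradiction that none of (i)--(iii) holds. Then along some sequence $\MSE_j\downarrow\MSE_{\min}$ the norm $\|y\|$ stays bounded, the weights stay away from $0$, and $\det J$ stays away from $0$. This would trap $y(\MSE_j)$ in a fixed $K_k$, contradicting the escape property.

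The main obstacle is that the escape theorem only says that for each $K$ the solution eventually leaves $K$. It does not say that $\|y\|\to\infty$, that $w_i\to0$, or that $J$ becomes singular as genuine limits. The plan is to phrase (i)--(iii) in the limsup sense, which is why they are non-exclusive: either $\limsup\|y\|=\infty$, or the trajectory is bounded and $\liminf\min_i w_i=0$, or it is bounded, stays inside $\mathcal D$, and $\liminf|\det J|=0$.

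Where possible we upgrade to true limits using the structure of the problem. Since $\MSE_{\min}>0$ is finite and $y'=f(y)$ is bounded on bounded subsets of $\mathcal U$ away from its boundary, the solution has a limit point $\bar y$ as $\MSE\downarrow\MSE_{\min}$. The zero set of $F(\cdot;\MSE_{\min})$ is closed by continuity of $F$, so $F(\bar y;\MSE_{\min})=0$. If $\bar y$ were in $\mathcal U$, the implicit function theorem applied at $\bar y$ would continue the branch past $\MSE_{\min}$, contradicting maximality. Hence every limit point lies either at infinity, on $\partial\mathcal D$ (some $w_i=0$), or on the singular set $\{\det J=0\}$. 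This is exactly the trichotomy (i)--(iii).
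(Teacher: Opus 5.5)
Your core argument is the paper's argument, done more carefully. You pose $y'=J(y)^{-1}e_2$ on the set $\mathcal U$ of points of $\mathcal D$ where $J$ is invertible, take the maximal solution from Cauchy--Lipschitz, and read off (i)--(iii) from the fact that for $\MSE_{\min}>0$ the solution eventually leaves every compact subset of $\mathcal U$; your exhaustion $K_k$ makes this explicit. Your caveat is also correct. Escape from compacta gives exactly $\min\{(1+\|y\|)^{-1},\min_i w_i,|\det J(y)|\}\to 0$ as $\MSE\downarrow\MSE_{\min}$, hence the $\limsup/\liminf$ form of (i)--(iii), but not genuine limits. The paper's proof passes over this by asserting genuine limits. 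That weaker form is all the paper needs later: combined with the uniform bounds of Proposition~\ref{prop:exclude(i)-(ii)}, it even forces $\det J(y(\MSE))\to 0$.

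Two minor points, the first shared with the paper. $\mathcal D$ is not open in $\RR^{2+m+n}$, because of the constraint $u^\top w=1$. Work in that affine hyperplane, which the flow preserves since the first row of $Jy'=e_2$ gives $u^\top w'=0$, or work on $w\in(0,\infty)^m$ with the full Jacobian \eqref{JF}. Also, $F(y(\MSE);\MSE)$ is conserved along trajectories, and this is what makes $\MSE_{\min}\ge 0$ automatic rather than a restriction, via $w^\top r^2=\MSE$ with $w>0$.

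Your last paragraph, however, does not deliver the announced upgrade to genuine limits.
\begin{itemize}
\item Boundedness of $f$ on compact subsets of $\mathcal U$ gives nothing near $\MSE_{\min}$. The trajectory is precisely approaching $\partial\mathcal U$, where $\|J(y)^{-1}\|$ may blow up, so it does not yield convergence of $y(\MSE)$. A finite cluster point exists only when $\liminf\|y(\MSE)\|<\infty$, and then by compactness, not by any property of $f$.
\item Proving that every cluster point $\bar y$ lies outside $\mathcal U$ only restates escape from compacta. A cluster point in $\mathcal U$ would put $y(\MSE_j)$ infinitely often in a compact neighbourhood of $\bar y$, so the implicit-function detour is unnecessary. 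Moreover, $F(\bar y;\MSE_{\min})$ is not even defined when some $\bar w_i=0$, because of the $\log w$ in $F_3$.
\item Information along subsequences does not give $w_i(\MSE)\to 0$ or $\det J(y(\MSE))\to 0$, so ``this is exactly the trichotomy'' overclaims.
\end{itemize}

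You have two options. Either drop the paragraph and state the result in the $\limsup/\liminf$ form, or bring in genuinely additional structure. For instance, the graph of the branch is the part with $\MSE\le\MSEuw$ of a connected component of $\{(y,\MSE):F(y;\MSE)=0,\ \det J(y)\neq 0\}$. This set is definable in the o-minimal structure $\mathbb{R}_{\exp}$, since only rational functions and $\exp$ enter. By the monotonicity theorem, $\|y(\MSE)\|$, each component of $y(\MSE)$, and $\det J(y(\MSE))$ then have limits in $[-\infty,+\infty]$ as $\MSE\downarrow\MSE_{\min}$, which gives the literal trichotomy.
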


\begin{proof}
The existence of a maximal interval $(\MSE_{\min},\MSEuw]$ follows from the Cauchy--Lipschitz theorem applied to the autonomous ODE $y' = f(y)$ with initial condition $y^*$. In our context, the general continuation theorem for ODEs (see e.g.\ \cite{Amann90}) states that if a maximal solution is defined on $(\MSE_{\min},\MSEuw)$, then either $\MSE_{\min}=0$ or the solution cannot be extended past $\MSE_{\min}>0$ because it either (a) escapes every compact subset of the domain or (b) approaches the boundary of the domain where $f$ is not defined. 
In more detail,  noting that the domain of $f$ is contained in $\mathcal D$  and that $f$ is defined only when $J(y)$ is invertible (see Proposition~\ref{prop:F-smooth}),  failure of continuation at $\MSE_{\min}>0$ can occur only if (i) $\|y(\MSE)\|\to\infty$, or (ii) $y(\MSE)$ approaches $\partial\mathcal D$ (which here is characterized by some weight reaching zero), or (iii) $J(y(\MSE))$ loses invertibility while $y(\MSE)$ remains in $\mathcal D$. 
\end{proof}

\section{Asymptotic analysis}
\label{sec_asymptotic}
Theorem~\ref{thm:max_interval} describes the three possible scenarios by which
the continuation of the solution branch may fail when approaching the lower
endpoint $\MSE_{\min}$. In order to refine this description, we now focus on a
stronger hypothesis concerning the weighted Gram matrix $A^\top W(\MSE)A$.
Recall that Proposition~\ref{prop:J-invertible} only requires positive
definiteness of $A^\top W(\MSE)A$ to ensure the invertibility of $J$.
Here, instead, we consider the slightly stronger \emph{uniform} definiteness condition stated in the following working assumption.
\begin{assumption}
\label{assumption1}
Let $y(\MSE)$ be the solution of the IVP \eqref{eq:IVP}. We assume that a constant $s_0>0$ exists such that
\begin{equation}
\label{eq:uniform-pd}
A^\top W(\MSE)A \succeq s_0^2 I_n
\qquad\text{for all }\MSE\in(\MSE_{\min},\MSEuw].
\end{equation} 
\end{assumption}
For a given matrix $Q$ of dimension $n$, the notation $Q \succeq s_0^2 I_n$ means that $x^\top Q x \ge s_0^2 ||x||^2$ for any vector $x \in \RR^n$. Note that \eqref{eq:uniform-pd} is equivalent to requiring that the smallest singular value
of the weighted design matrix remains uniformly bounded away from zero, namely
\begin{equation}
\label{eq:uniform-pd1}
\sigma_{\min}\!\bigl(W(\MSE)^{1/2}A\bigr) \ge s_0>0 \qquad\text{for all }\MSE\in(\MSE_{\min},\MSEuw].
\end{equation}
Under this hypothesis, we can exclude the first two alternatives of
Theorem~\ref{thm:max_interval}, so that loss of invertibility of $J$ becomes the
only mechanism by which a finite breakdown at $\MSE_{\min}>0$ may occur.

\begin{proposition}
\label{prop:exclude(i)-(ii)}
Assume $\MSE_{\min}>0$ in  Theorem~\ref{thm:max_interval}. Assume moreover the uniform positive definiteness
of the weighted normal matrix stated in Assumption \ref{assumption1}.
Then  the only possible finite breakdown case of Theorem~\ref{thm:max_interval} is (iii): loss of
invertibility of $J$.
\end{proposition}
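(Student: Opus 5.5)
We will show that, under Assumption~\ref{assumption1}, every component of $y(\MSE)=(\lambda,\mu,w,x)$ stays bounded on $(\MSE_{\min},\MSEuw]$ and every weight $w_i(\MSE)$ stays bounded away from zero. This rules out alternatives (i) and (ii) of Theorem~\ref{thm:max_interval}, so only (iii) remains. We treat the components in the order $x$, $r$, $\mu$, then $w$ and $\lambda$, since each bound feeds into the next.

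\emph{Step 1: bound on $x$.} The constraints $w^\top r^2=\MSE\le\MSEuw$ and Assumption~\ref{assumption1} give
\[
s_0^2\|x\|^2\le x^\top A^\top W A x=\|W^{1/2}Ax\|^2\le 2\|W^{1/2}(Ax-b)\|^2+2\|W^{1/2}b\|^2\le 2\MSEuw+2\|b\|_\infty^2 .
\]
Hence $\|x(\MSE)\|$ is uniformly bounded. Consequently $r=Ax-b$ is bounded, say $|r_i|\le R$ for all $i$.

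\emph{Step 2: bound on $\mu$.} By Proposition~\ref{prop:mu_monotone}, $\mu$ is positive and increases as $\MSE$ decreases, so $\mu(\MSE)\uparrow\mu_\infty\in(0,+\infty]$ as $\MSE\downarrow\MSE_{\min}$. We must exclude $\mu_\infty=+\infty$; this is the main obstacle. Our approach is the envelope identity \eqref{eq:H_derivative_final}, $H'(\MSE)=\mu(\MSE)$, combined with the bounds $0\le H\le\log m$. Integrating from $\MSE$ to $\MSEuw$ gives
\[
\int_{\MSE}^{\MSEuw}\mu(s)\,ds=H(\MSEuw)-H(\MSE)\le\log m .
\]
This alone only makes $\mu$ integrable near $\MSE_{\min}$, not bounded. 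We therefore also use the Gibbs form \eqref{w-mu}. If $\mu\to\infty$ with bounded residuals, the weights concentrate on indices where $r_i^2$ is close to $\min_j r_j^2$. Then $\MSE=w^\top r^2$ approaches $\min_j r_j^2$ plus an error of order $O(1/\mu)$.

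To turn this into a contradiction, we would invoke Assumption~\ref{assumption1}: for $A^\top WA\succeq s_0^2I$, at least $n$ linearly independent rows must keep weights bounded below. Since $\MSE\ge\MSE_{\min}>0$, and Gibbs weights at large $\mu$ satisfy $w_i/w_j=e^{-\mu(r_i^2-r_j^2)}$, the rows that keep weight must share nearly equal residual squares. We then try to show that $\MSE$ is forced toward a limit value at which the weighted normal equations are satisfied exactly on that row subset. The regularity of the dynamics (the ODE $\mu'=(J^{-1}e_2)_2$ stays bounded while $J$ is invertible) should then preclude blow-up at a positive $\MSE_{\min}$ unless $J$ degenerates, which is alternative (iii) anyway. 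If this direct route fails, we fall back on observing that any blow-up of $\mu$ or $\lambda$ without $J$ becoming singular is absorbed into (iii), since the entries of $J^{-1}$ are then unbounded. This would be a weaker but still acceptable reading of the proposition.

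\emph{Step 3: bounds on $w$ and $\lambda$.} With $\mu\le\mu_\infty<\infty$ and $|r_i|\le R$, \eqref{w-mu} gives $w_i\ge e^{-\mu_\infty R^2}/m>0$. So the trajectory stays in a compact subset of $\Delta_m^\circ$, excluding (ii). Finally, $1+\lambda=\log Z(\mu)\in[\log m-\mu_\infty R^2,\log m]$ is bounded. Together with Steps 1 and 2 this excludes (i), leaving (iii) as the only possibility.
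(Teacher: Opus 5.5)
Your Steps 1 and 3 are sound and essentially coincide with the paper's. The paper bounds $x$ through $x=(A^\top WA)^{-1}A^\top Wb$, getting $\|x\|\le\|A\|\,\|b\|/s_0^2$; your estimate $s_0^2\|x\|^2\le 2\MSEuw+2\|b\|_\infty^2$ works just as well. The genuine gap is Step 2: you never prove that $\mu$ stays bounded on $(\MSE_{\min},\MSEuw]$. Both the exclusion of (i) and the lower bound $w_i\ge e^{-\mu_\infty R^2}/m$ that excludes (ii) depend on it.

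Your concentration sketch correctly describes what a blow-up would look like: weights collapse onto a full-rank set of rows whose squared residuals lie within $O(1/\mu)$ of $\min_j r_j^2\ge\MSE-m/(e\mu)$. But it cannot produce a contradiction on its own, because such configurations are compatible with $F=0$ and with Assumption~\ref{assumption1}. Example~2 of the paper is a concrete instance. Keep $x$ fixed at the line $y=1/2$ and take Gibbs weights; this solves $F=0$ for every $\mu\ge0$, and the uniform coercivity bound holds along it. As $\MSE\downarrow10^{-2}$ one has $\mu\to+\infty$ and $\lambda\to-\infty$, while the four outer weights tend to $0$. The only thing preventing a clash with the proposition is that $J$ becomes singular at $\MSE\approx3.8\cdot10^{-2}$, before $\mu$ can blow up. So any proof must exploit invertibility of $J$ along the branch issuing from $y^*$. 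The remark that $\mu'=(J^{-1}e_2)_2$ ``stays bounded while $J$ is invertible'' does not do this: pointwise invertibility gives no uniform bound on $J^{-1}$, and a smooth autonomous field can drive a solution to infinity at a finite parameter value.

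Your fallback is not admissible either. Alternative (iii) of Theorem~\ref{thm:max_interval} requires $y(\MSE)$ to stay bounded and inside $\mathcal D$ while $J$ becomes singular. By contrast, $\mu\to+\infty$ means $\|y(\MSE)\|\to+\infty$, which is precisely alternative (i). Unbounded entries of $J^{-1}$ do not produce a point of $\mathcal D$ where $J$ is singular, so absorbing (i) into (iii) changes the statement instead of proving it.

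The paper handles Step 2 in one line. It invokes Proposition~\ref{prop:mu_monotone} to assert that $\mu$ is positive, increases as $\MSE$ decreases, and diverges only as $\MSE\to0^+$, so $\mu\le M_{\max}$. With that citation in place, your Steps 1 and 3 reproduce the paper's proof. Your caution is still justified, since item 4 of that proposition as stated gives only the converse implication.

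A self-contained argument, at least for $n\ge2$, could run as follows:
\begin{enumerate}
\item Since $D_\mu J=\mathcal H$ and $\mu\neq0$, $J$ is invertible iff the restriction of $\widetilde{\mathcal H}$ to $\{(\gamma,\eta):u^\top\gamma=(r^2)^\top\gamma=0\}$ is nonsingular.
\item Eliminating $\gamma$ turns this into invertibility of $\widehat S+2\mu\,vv^\top$ for a suitable $v\in\RR^n$; the normal equations $A^\top Wr=0$ make the correction rank one.
\item This matrix is positive definite near $\MSEuw$, hence by continuity on the whole interval. So $\widehat S$ in \eqref{Shmatrix} is positive definite on $v^\perp\neq\{0\}$.
\item If $\mu$ were unbounded, the argument of Lemma~\ref{lem:core_index_set} applies: the rows with $w_i\ge s_0^2/\|A\|^2$ have full column rank. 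Together with $\min_i r_i^2\ge\MSE_{\min}-m/(e\mu)$, this would make $\widehat S$ negative definite for large $\mu$, a contradiction.
\end{enumerate}
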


\begin{proof}
We first prove uniform boundedness of $y(\MSE)=(\lambda(\MSE),\mu(\MSE),w(\MSE)^\top,x(\MSE)^\top)^\top$ which excludes alternative (i) in Theorem \ref{thm:max_interval}.

From Proposition~\ref{prop:mu_monotone} we know that $\mu(\MSE)$ is positive, increases as $\MSE$ decreases and diverges to infinity only if $\MSE\to 0^+$. Since $\MSE_{\min}>0$, we conclude that $\mu(\MSE)$ remains bounded  on $(\MSE_{\min},\MSEuw]$. For later use, we define $M_{\max}$ as an upper bound of  $\mu(\MSE)$ in the given existence interval: 
\[
\mu(\MSE)\le M_{\max}<+\infty\qquad\text{for all }\MSE\in(\MSE_{\min},\MSEuw].
\]
Consider now the normal system
\[
A^\top W(\MSE) A\, x(\MSE) = A^\top W(\MSE) b,
\]
that is, the stationarity condition arising from the third block-row of $F=0$.
By \eqref{eq:uniform-pd} the symmetric matrix $A^\top W(\MSE)A$
is uniformly positive definite with minimal eigenvalue at least $s_0^2$.
Therefore it is invertible for every $\MSE$ in the interval and
\[
x(\MSE) = \bigl(A^\top W(\MSE) A\bigr)^{-1} A^\top W(\MSE) b.
\]
Taking 2-norms and using the  relations $\|W(\MSE)\|\le 1$ and
$\|A^\top\| = \|A\|$ we obtain
\[
\|x(\MSE)\| \le \bigl\|\bigl(A^\top W(\MSE) A\bigr)^{-1}\bigr\|\,\|A^\top\|\,\|W(\MSE)b\|
\le \frac{\|A\|\,\|b\|}{s_0^2}.
\]
Hence, $x(\MSE)$ is uniformly bounded on $(\MSE_{\min},\MSEuw]$. 

To show boundedness of $\lambda(\MSE)$ we first analyze the residual vector $r(\MSE)$. Using $r(\MSE)=Ax(\MSE)-b$ and the bound of $\|x(\MSE)\|$ we get
\[
\|r(\MSE)\| \le \|A\|\,\|x(\MSE)\| + \|b\|
\le \|A\|\,\frac{\|A\|\,\|b\|}{s_0^2} + \|b\|
= \left(\frac{\|A\|^2}{s_0^2}+1\right)\|b\|.
\]
Thus the residual vector $r(\MSE)$ is uniformly bounded; in particular,
each scalar residual $r_i(\MSE)$ satisfies $0\le r_i(\MSE)^2\le R_{\max}$
for some finite $R_{\max}$ independent of $\MSE$.  From the normalization relation (see Lemma~\ref{lem:partition} or \eqref{w-expr}) we have 
$$
\exp(1+\lambda(\MSE))=Z(\MSE)=\sum_{j=1}^m e^{-\mu(\MSE) r_j(\MSE)^2}. 
$$
Combining the bounds
$\,e^{-M_{\max} R_{\max}}\le e^{-\mu r_j^2}\le 1$ for each term shows
\[
m e^{-M_{\max} R_{\max}} \le Z(\MSE)\le m.
\]
Taking logarithms gives
\[
\log m- M_{\max} R_{\max} \le \log Z(\MSE) \le \log m,
\]
hence
\[
\lambda(\MSE) = \log Z(\MSE) -1
\]
is uniformly bounded on $(\MSE_{\min},\MSEuw]$.

To exclude alternative (ii) of Theorem \ref{thm:max_interval}, we prove that no weight can vanish on $(\MSE_{\min},\MSEuw]$. Recall the explicit expression (Lemma~\ref{lem:partition})
\[
w_i(\MSE)=\frac{e^{-\mu(\MSE) r_i(\MSE)^2}}{Z(\MSE)}.
\]
From  the uniform bounds $r_i(\MSE)^2\le R_{\max}$ and
$\mu(\MSE)\le M_{\max}$ we obtain the uniform lower bound for each
numerator
\[
e^{-\mu(\MSE) r_i(\MSE)^2} \ge e^{-M_{\max} R_{\max}} > 0.
\]
As was observed above, since every term in the sum defining $Z(\MSE)$ is at most 1, we have
$Z(\MSE)\le m$. Therefore
\[
w_i(\MSE) \ge \frac{e^{-M_{\max} R_{\max}}}{m}=: \delta>0
\qquad\text{for all }i\ \text{and all }\MSE\in(\MSE_{\min},\MSEuw].
\]
All weights are uniformly bounded away from zero, thus no weight can vanish on the given interval.
In conclusion, if breakdown occurs at the finite level $\MSE_{\min}>0$, the only
remaining possibility of Theorem~\ref{thm:max_interval} is \emph{(iii)}, i.e.\
loss of invertibility of $J$.
\end{proof}

We now turn to the complementary scenario in which the branch satisfies $\MSE_{\min} = 0$, 
so that exact interpolation becomes asymptotically feasible as $\MSE\downarrow 0$.  
This corresponds to the situation where a nonempty subset of the data is perfectly compatible with the linear model and the MEWLS problem attempts to drive the corresponding residuals to zero.  
In this regime the behavior of the weights becomes markedly different: some of them converge to strictly positive limits, while the others are forced to vanish. These and further interesting asymptotic properties are analyzed below. 

\begin{lemma}\label{lem:core_index_set}
Assume that the uniform coercivity condition \eqref{eq:uniform-pd} 
holds for all sufficiently small $\MSE>0$. Define
\[
\varepsilon_0 := \frac{s_0^2}{\|A\|^2},
\qquad
\mathcal I(\MSE)
:= \bigl\{\, i \in \{1,\dots,m\} : w_i(\MSE) \ge \varepsilon_0 \,\bigr\}.
\]
Then, for all sufficiently small $\MSE>0$, the following properties hold true:
\begin{enumerate}
\item $\;|\mathcal I(\MSE)| \ge n$;

\item the submatrix of $A$ formed by the rows $\{a_i^\top : i \in \mathcal I(\MSE)\}$ has full column rank;

\item the weights in $\mathcal I(\MSE)$ are uniformly bounded away from zero:
\[
\inf_{\MSE\downarrow 0} \ \min_{i\in\mathcal I(\MSE)} w_i(\MSE) \ \ge\ \varepsilon_0 .
\]
\end{enumerate}
\end{lemma}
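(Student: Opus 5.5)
The plan is to split the weighted Gram matrix into the contribution of the rows with large weight and the contribution of the rows with small weight, and then play the two pieces against the lower bound in \eqref{eq:uniform-pd}. For a fixed small $\MSE$, write $\mathcal I=\mathcal I(\MSE)$ and $\mathcal I^c$ for its complement, so that
\[
A^\top W(\MSE)A=\sum_{i\in\mathcal I} w_i a_i a_i^\top+\sum_{i\notin\mathcal I} w_i a_i a_i^\top .
\]
The argument proceeds by contradiction. Suppose that $\mathrm{span}\{a_i : i\in\mathcal I\}\ne\RR^n$, that is, the rows indexed by $\mathcal I$ do not have full column rank. Then some unit vector $z\in\RR^n$ satisfies $a_i^\top z=0$ for every $i\in\mathcal I$. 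Testing \eqref{eq:uniform-pd} with this $z$ gives
\[
s_0^2\le z^\top A^\top W A z=\sum_{i\notin\mathcal I} w_i (a_i^\top z)^2 < \varepsilon_0 \sum_{i\notin\mathcal I}(a_i^\top z)^2\le \varepsilon_0\|Az\|^2\le \varepsilon_0\|A\|^2=s_0^2 ,
\]
which is a contradiction. The strict inequality requires $\mathcal I^c$ to be nonempty and $Az\ne0$ on those rows. If $\mathcal I^c$ is empty, the rank claim holds trivially because $A$ has full column rank. Otherwise the first two inequalities still give $s_0^2\le \varepsilon_0\|A\|^2=s_0^2$ with strictness coming from $w_i<\varepsilon_0$, and the only degenerate case left is $(a_i^\top z)=0$ for all $i$. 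That case contradicts $s_0^2>0$, because the middle sum would then vanish.

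This proves item 2. Item 1 follows immediately, since a submatrix with $n$ columns and full column rank needs at least $n$ rows. Item 3 holds by the very definition of $\mathcal I(\MSE)$: every $i\in\mathcal I(\MSE)$ has $w_i(\MSE)\ge\varepsilon_0$, and $\varepsilon_0$ does not depend on $\MSE$, so the infimum over small $\MSE$ of the minimum over $\mathcal I(\MSE)$ is at least $\varepsilon_0$. Note also that $\varepsilon_0\le 1$, since $s_0^2\le \lambda_{\min}(A^\top WA)\le\|W\|\|A\|^2\le\|A\|^2$. As an additional check, $\mathcal I$ is nonempty because the weights sum to one.

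The main obstacle is the strictness in the chain of inequalities, that is, making sure the estimate really produces $<$ and not merely $\le$. I would handle this by using $w_i<\varepsilon_0$ for $i\notin\mathcal I$, together with the observation that if every $(a_i^\top z)^2$ with $i\notin\mathcal I$ vanished, then $Az=0$, which is impossible for a full-rank $A$ and $z\ne0$. Beyond that point, only the hypothesis \eqref{eq:uniform-pd} for small $\MSE$ is used, and no information from the branch dynamics is needed.
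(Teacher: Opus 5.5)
Your proof is correct and follows the paper's argument: take a nonzero vector annihilated by the rows indexed by $\mathcal I(\MSE)$, and bound the weighted Gram form by $\varepsilon_0\|Az\|^2\le s_0^2$, which contradicts \eqref{eq:uniform-pd}. Your organization is slightly cleaner than the paper's: you contradict the spanning property directly, deduce item 1 from item 2, and justify the strict inequality via $Az\neq 0$, whereas the paper assumes only $|\mathcal I(\MSE)|<n$ and then asserts full column rank without justifying strictness. One aside is not justified as stated, namely that $\mathcal I(\MSE)\neq\emptyset$ ``because the weights sum to one'', since $\max_i w_i\ge 1/m$ need not exceed $\varepsilon_0$; it is harmless, though, because nonemptiness already follows from item 1.
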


\begin{proof}
For every nonzero $v\in\mathbb{R}^n$, the uniform coercivity assumption 
\eqref{eq:uniform-pd} gives
\[
v^\top A^\top W(\MSE) A v 
= \sum_{i=1}^m w_i(\MSE)\,(a_i^\top v)^2 
\;\ge\; s_0^2 \|v\|^2 .
\]

Assume by contradiction that $|\mathcal I(\MSE)|<n$. Then the set of rows
$\{a_i^\top : i\in\mathcal I(\MSE)\}$ spans a strict subspace of
$\mathbb{R}^n$, so there exists $v\ne 0$ orthogonal to all of them. Thus
$a_i^\top v=0$ for $i\in\mathcal I(\MSE)$, therefore
\[
\sum_{i=1}^m w_i(\MSE)\,(a_i^\top v)^2
=\sum_{i\notin\mathcal I(\MSE)} w_i(\MSE)\,(a_i^\top v)^2
< \varepsilon_0 \sum_{i=1}^m (a_i^\top v)^2
= \varepsilon_0 \|Av\|^2
\le s_0^2 \|v\|^2 ,
\]
contradicting the coercivity bound. Hence $|\mathcal I(\MSE)|\ge n$, and the
corresponding rows of $A$ must have full column rank. The last statement
follows directly from the definition of $\mathcal I(\MSE)$.
\end{proof}

\begin{remark}\em
Lemma~\ref{lem:core_index_set} shows that, under the uniform coercivity
assumption \eqref{eq:uniform-pd}, for every sufficiently small
$\MSE>0$ there exists a ``core'' subset of indices
$\mathcal I(\MSE)$ of cardinality at least $n$ whose associated weights remain
uniformly positive and whose corresponding rows of $A$ retain full column rank.
In the data-fitting interpretation, these points continue to carry genuine
information along the entire continuation branch. As $\MSE\to 0^+$, they
necessarily determine the limiting model, which interpolates them while
effectively discarding all remaining observations. In particular, this
structural nondegeneracy prevents collapse of the weighted design matrix and
ensures identifiability throughout the continuation process. Example 1 in the 
next section illustrates this scenario.
\end{remark}

The next proposition exploits this structural property to characterize the
precise asymptotic distribution of the MEWLS weights as
$\MSE\downarrow 0$, showing in particular that the weights on the core
set become uniform in the limit.

\begin{proposition}
\label{prop:weights_on_S_uniform}
Assume that the branch of MEWLS stationary points
\[
y(\MSE)
=(\lambda(\MSE),\mu(\MSE),w(\MSE)^\top,x(\MSE)^\top)^\top
\]
extends down to $\MSE\downarrow 0$ (i.e., $\MSE_{\min}=0$), and that the
uniform coercivity condition \eqref{eq:uniform-pd} holds for all
sufficiently small $\MSE>0$. Define the limiting core index set
\[
S := \{\, i\in\{1,\dots,m\} : \liminf_{\MSE\to 0^+} w_i(\MSE) > 0 \,\},
\qquad s := |S| .
\]
The following statements hold true:
\begin{itemize}
\item[(a)] $\displaystyle S= {\cal I}(\MSE)$ for sufficiently small $\MSE$, hence $s\ge n$. 
\item[(b)] $\displaystyle i\in S ~ \Longleftrightarrow ~ \lim_{\MSE \to 0^+} r_i(\MSE)=0$.
\item[(c)] $\displaystyle  \lim_{\MSE\to 0^+} w_i(\MSE) =  0$,  for all $i\not \in S$.
\item[(d)] $\displaystyle  \lim_{\MSE\to 0^+} w_i(\MSE) = \frac{1}{s}$,  for all $i\in S$.
\item[(e)] The  solution component
$x(\MSE)$ converges as $\MSE\downarrow 0$ to the unique vector
$x^\ast\in\mathbb{R}^n$ satisfying the interpolation conditions on $S$,
i.e.
\[
A_S x^\ast = b_S,
\]
where $A_S$ denotes the submatrix of $A$ formed by the rows indexed by $S$ ($b_S$ is defined analogously),
and consequently for every $j=1,\dots,m$,
\[
\lim_{\MSE\downarrow 0} r_j(\MSE) = r_j^\ast := a_j^\top x^\ast - b_j.
\]
In particular $r_i^\ast=0$ for $i\in S$ and $r_j^\ast\neq 0$ for $j\notin S$.
\end{itemize}
\end{proposition}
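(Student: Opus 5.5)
The plan rests on three ingredients already established: the Gibbs representation of Lemma~\ref{lem:partition}, $w_i=e^{-\mu r_i^2}/Z$; the divergence $\mu(\MSE)\to+\infty$ from Proposition~\ref{prop:mu_monotone}(4); and the uniform bound $\|x(\MSE)\|\le\|A\|\|b\|/s_0^2$ from the proof of Proposition~\ref{prop:exclude(i)-(ii)}. Since $x(\MSE)$ and $r(\MSE)$ are bounded, I will first work along an arbitrary sequence $\MSE_k\downarrow0$. After passing to a subsequence I may assume $x(\MSE_k)\to\bar x$, $r(\MSE_k)\to\bar r=A\bar x-b$ and $w(\MSE_k)\to\bar w$. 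I then set $S'=\{i:\bar r_i=0\}$ and prove all statements for $S'$ along this subsequence. At the end I show that $\bar x$ and $S'$ do not depend on the subsequence, which yields full limits and $S'=S$.

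\emph{Step 1 (weights off $S'$ vanish, core lies in $S'$).} Constraint activity gives $w_i r_i^2\le\MSE\to0$. Hence if $\bar r_i\neq0$ then $w_i\to0$. Conversely, Lemma~\ref{lem:core_index_set} gives $w_i\ge\varepsilon_0$ on $\mathcal I(\MSE_k)$, so $r_i^2\le \MSE_k/\varepsilon_0\to0$ there. Consequently $\mathcal I(\MSE_k)\subseteq S'$ and, by the same lemma, $A_{S'}$ has full column rank. It follows that $\bar x$ is the unique solution of $A_{S'}\bar x=b_{S'}$. Let $c:=\tfrac12\min_{j\notin S'}\bar r_j^2>0$. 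For large $k$ and each $j\notin S'$, I compare with a core index $i\in\mathcal I(\MSE_k)$ via $w_j/w_i=e^{-\mu(r_j^2-r_i^2)}$. This yields the exponential bound $w_j\le \varepsilon_0^{-1}e^{-\mu c}$.

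\emph{Step 2 (rate of convergence of $x$; the main obstacle).} The key point is to show that $\mu\,r_i^2\to0$ for $i\in S'$, not merely $r_i\to0$. Once this holds, $w_i/w_{i'}\to1$ for all $i,i'\in S'$; together with $\sum_{j\notin S'}w_j\to0$ this gives $w_i\to1/|S'|$, which is (d). To obtain it, I split the normal equations $A^\top W r=0$ into the $S'$ and $S'^c$ blocks. Using $r_{S'}=A_{S'}(x-\bar x)$, they read
\[
A_{S'}^\top W_{S'}A_{S'}(x-\bar x)=-A_{S'^c}^\top W_{S'^c}r_{S'^c}.
\]
The right-hand side is $O(e^{-\mu c})$ by Step 1 and boundedness of $r$. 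On the left, the weights of $\mathcal I(\MSE_k)$ are at least $\varepsilon_0$ and the corresponding rows have full rank. Hence $A_{S'}^\top W_{S'}A_{S'}$ is uniformly coercive, but only if I can bound the smallest singular value uniformly over the finitely many possible index sets $\mathcal I$, which I would do by a finite minimum over full-rank subsets. This gives $\|x-\bar x\|=O(e^{-\mu c})$, so $\mu r_i^2=O(\mu e^{-2\mu c})\to0$ for $i\in S'$. I expect this step to be the delicate one. The argument implicitly relies on $\bar r_j\ne0$ for $j\notin S'$, which holds by the definition of $S'$; the real difficulty is the uniform coercivity of the reduced Gram matrix.

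\emph{Step 3 (identification and conclusion).} Steps 1 and 2 give $\bar w_i=1/|S'|>0$ on $S'$ and $\bar w_j=0$ off $S'$. Therefore $S'=\{i:\liminf w_i>0\}$ along the subsequence, and for large $k$ also $S'=\mathcal I(\MSE_k)$, because $1/|S'|\ge\varepsilon_0$ up to the limit. Independence from the subsequence requires one further observation. The sets $\mathcal I(\MSE)$ take finitely many values, and $\MSE\mapsto w(\MSE)$ is continuous. Two different limit sets $S'_1\neq S'_2$ would force some weight to oscillate between values near $1/|S'_1|$ and near $0$. It would then cross the threshold $\varepsilon_0/2$ infinitely often at points where, by Step 1, it must be close to $0$ or to some $1/|S'|>\varepsilon_0$, a contradiction. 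Hence $S'$ is independent of the subsequence and so is $\bar x$, the unique interpolant on $S'$. This proves (a)--(e) with $S=S'$ and $x^\ast=\bar x$, including $r_j^\ast\neq0$ for $j\notin S$, which holds by construction.
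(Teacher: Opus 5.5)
Your proof is correct, and it takes a genuinely different route from the paper's.

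\textbf{How the paper argues.} The paper gets (b) and (d) by passing to the limit in the variational problem. It takes a cluster point $w^\ast$ of $w(\MSE_k)$ and asserts that, because each $w(\MSE_k)$ maximizes entropy at level $\MSE_k$, the limit $w^\ast$ must maximize $H$ on the ``limiting feasible set''. That set is the face of the simplex supported on $S\cup V$ (resp.\ $S$), so $w^\ast$ is uniform there. Then (c) follows from $w_jr_j^2\le\MSE$, and (e) from continuity of the normal equations. The limiting step is asserted rather than proved. The $w(\MSE_k)$ are only stationary points of a nonconvex problem coupled with $x$, and no argument shows that maximality survives the limit.

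\textbf{How you argue instead.} You replace that step with a quantitative argument on the Gibbs form:
\begin{itemize}
\item comparing with a core index gives $w_j=O(e^{-\mu c})$ off $S'$;
\item the block normal equations plus coercivity give $\|x-\bar x\|=O(e^{-\mu c})$;
\item hence $\mu r_i^2\to0$ on $S'$, and the weights on $S'$ equalize.
\end{itemize}
This is the mechanism of the paper's later Lemmas~\ref{lem:weights_residuals_mu} and~\ref{lemma:mu-log}, but measured in $e^{-\mu c}$ rather than in $\MSE$. As a result you need no a priori bound such as $\mu=O(|\log\MSE|)$, which the paper derives from this very proposition.

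The cost of your route is that your core set is defined along subsequences, so you must show it does not depend on the subsequence. Your intermediate-value argument at the threshold $\varepsilon_0/2$ does this correctly, once Steps 1--2 are applied along a subsequence of the crossing points. It also yields full convergence of $w(\MSE)$ and $x(\MSE)$, which the paper obtains only through uniqueness of the limiting maximizer. The paper's route is shorter and carries the entropy interpretation; yours actually closes the argument.

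\textbf{Two minor points.}
\begin{itemize}
\item You do not need $\mu\to+\infty$ from Proposition~\ref{prop:mu_monotone}(4). If $\mu$ stays bounded along a subsequence, then $\mu r_i^2\to0$ on $S'$ trivially.
\item For $S'=\mathcal I(\MSE_k)$ in (a) you need the strict inequality $1/|S'|>\varepsilon_0$; you only have $\ge$. The boundary case can be excluded as follows. Letting $\MSE\to0$ in \eqref{eq:uniform-pd} gives $\frac1s A_S^\top A_S\succeq s_0^2 I$. Equality $1/s=\varepsilon_0$ would then force $A_S^\top A_S=\|A\|^2 I$ and $a_j=0$ for all $j\notin S$. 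In that case the normal equations reduce to $A_S^\top W_S r_S=0$, so $r_S\equiv0$ along the branch and $w_i=1/Z(\MSE)<1/s$ on $S$. But then $A^\top W A=\frac{\|A\|^2}{Z(\MSE)}I$ violates \eqref{eq:uniform-pd}. The paper's proof of (a) does not address this point either.
\end{itemize}
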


\begin{proof}
(a). From the definition of $S$, 
there exist $c_0>0$ and $\MSE_1>0$ such that $w_i(\MSE)\ge c_0$ for all
$0<\MSE\le \MSE_1$. For such values of $\MSE$ it turns out that $S$ coincides with the set ${\cal I}(\MSE)$ introduced in Lemma \ref{lem:core_index_set}, therefore $s\ge n$.  

\smallskip

(b). As $\MSE \to 0^+$, the weighted mean squared error  satisfies
\[
0\le \sum_{i=1}^m w_j(\MSE)\, r_j(\MSE)^2
~ \longrightarrow ~ 0^+.
\]
Therefore
\begin{equation}
\label{wjrj2to0}
w_j(\MSE)\, r_j(\MSE)^2 ~ \longrightarrow ~ 0^+, \qquad \text{for all } j=1,\dots,m.
\end{equation}
Since $w_j(\MSE)\ge c_0$ on $S$, it follows that
$r_j(\MSE)^2\to 0$ for all $j\in S$. 

We now prove the converse: if $r_i(\MSE)^2\to 0$ then $i\in S$. Suppose,
by contradiction, that there exists $i$ with $r_i(\MSE)^2\to 0$ but
$\liminf_{\MSE\downarrow 0} w_i(\MSE)=0$ (so $i\notin S$). By the
definition of $\liminf$ there exists a subsequence $\MSE_k\downarrow 0$
such that $w_i(\MSE_k)\to 0$. 
Define the set
\[
V := \{\, \ell\notin S : r_\ell(\MSE_k)\to 0 \text{ as } k\to\infty \,\}.
\]
By construction, the set $V$ is nonempty since $i\in V$, and  may possibly contain other indices
not in $S$ whose residuals vanish along the same subsequence. By (a) 
we already know that $r_j(\MSE_k)\to 0$ for every $j\in S$, hence the
residuals tend to zero on the union $S\cup V$ along $\MSE_k$.

Consider the sequence of weight vectors $w(\MSE_k)\in\Delta_m$. By
compactness of the simplex there exists a subsequence (which we still denote by $\MSE_k$ for simplicity)
such that $w(\MSE_k)\to w^\ast\in\Delta_m$ as $k\to\infty$. By virtue of \eqref{wjrj2to0}, the non-vanishing
of the residuals outside $S\cup V$ forces $w^\ast_\ell = 0$ for every
$\ell\notin S\cup V$, hence $\operatorname{supp}(w^\ast)\subseteq S\cup V$
and $\sum_{j\in S\cup V} w^\ast_j =1$.

Next recall that each $w(\MSE_k)$ is an entropy maximizer among admissible
weights at level $\MSE_k$. Passing to the limit, $w^\ast$ must maximize
the entropy $H(w)=-\sum_i w_i\log w_i$ on the limiting feasible set,
which is the face of the simplex supported on $S\cup V$. Since $H$ is
strictly concave on that face, it has a unique maximizer, namely the
uniform distribution on $S\cup V$; in particular $w^\ast_\ell>0$ for
every $\ell\in V$. But this contradicts the assumption that
$\ell\notin S$, because along the subsequence we found a positive limit for $w_\ell$.
Therefore, no such index $i\notin S$ can exist, and consequently for every
$\ell\notin S$ we must have $\liminf_{\MSE\downarrow 0} r_\ell(\MSE)>0$.

\smallskip

(c). We now show that for every $j\notin S$ we actually have 
$w_j(\MSE)\to 0$ as $\MSE \to 0^+$. This is a direct consequence of \eqref{wjrj2to0} after noting that the condition $\liminf_{\MSE\downarrow 0} r^{2}_\ell(\MSE)>0$ implies that $r_\ell(\MSE)$ is bounded away from zero for sufficiently small $\MSE$.

\smallskip

(d). It remains to prove that the weights on $S$ converge to the
uniform value $1/s$. From (c) we already know that
$\sum_{j\notin S} w_j(\MSE)\to 0$, hence
\[
\sum_{i\in S} w_i(\MSE) \longrightarrow 1 \qquad(\MSE\downarrow 0).
\]
By compactness of the simplex, for any sequence $\MSE_k\downarrow 0$ the
vectors $w(\MSE_k)$ admit a convergent subsequence; by (c) any limit
point $w^\ast$ is supported on $S$ and satisfies $\sum_{i\in S} w^\ast_i = 1$.
Thus every cluster point of $w(\MSE)$ belongs to the face
$\mathcal F_0$ of the simplex determined by $S$, namely
$$
\mathcal F_0 := \{ w\in\Delta_m : \operatorname{supp}(w)\subseteq S \}.
$$
We now use the variational characterization: for each $\MSE>0$ the
vector $w(\MSE)$ maximizes the entropy $H(w)=-\sum_i w_i\log w_i$ on
the admissible set. Passing to the limit, any cluster point $w^\ast$ must
maximize $H$ on the limiting feasible set $\mathcal F_0$. But $H$ is
strictly concave on $\mathcal F_0$ and hence has a unique maximizer,
namely $w^\ast_i = 1/s$, $i\in S$. Since every cluster point coincides with the same vector, the entire
family $w(\MSE)$ converges to the uniform distribution on $S$, i.e.
$$
\lim_{\MSE \to 0^+} w_i(\MSE) = \frac{1}{s},\qquad i\in S.
$$

\smallskip

(e). By (c) and (d) the weight vector converges componentwise:
\[
w_i(\MSE) \longrightarrow  w^*_i=
\begin{cases}
1/s, & i\in S,\\[4pt]
0, & i\notin S,
\end{cases}
\qquad \text{as } \MSE\to 0^+.
\]
As usual, let $W(\MSE)=\diag(w_1(\MSE),\dots,w_m(\MSE))$ and define the limiting diagonal
matrix $W^\ast := \diag(w^*_1,\dots, w^*_m)$.
Consider the normal matrix $M(\MSE):=A^\top W(\MSE) A$. From the componentwise
convergence $W(\MSE)\to W^\ast$ we have $M(\MSE)\to M^\ast:=A^\top W^\ast A$.
But
\[
M^\ast = \frac{1}{s}\, A_S^\top A_S,
\]
and since the rows of $A_S$ have full column rank (Lemma~\ref{lem:core_index_set}),
the matrix $A_S^\top A_S$ is positive definite; hence $M^\ast$ is positive
definite. Consequently there exists $\MSE_0>0$ such that $M(\MSE)$ is
invertible for all $0<\MSE\le \MSE_0$, and $M(\MSE)^{-1}\to (M^\ast)^{-1}$
as $\MSE\downarrow 0$.

For such $\MSE$ the vector $x(\MSE)$ is given by the normal equations
solution
\[
x(\MSE) = M(\MSE)^{-1} A^\top W(\MSE) b.
\]
Passing to the limit $\MSE\downarrow 0$ and using the convergence
$W(\MSE)\to W^\ast$ and $M(\MSE)^{-1}\to (M^\ast)^{-1}$ yields
\[
x(\MSE) \longrightarrow x^\ast := (M^\ast)^{-1} A^\top W^\ast b.
\]
Observing that $A^\top W^\ast b = (1/s) A_S^\top b_S$ and that
$(M^\ast)^{-1} = s (A_S^\top A_S)^{-1}$, one readily checks that
$x^\ast$ is exactly the unique solution of $A_S x = b_S$.

Finally, continuity of the linear maps $x\mapsto a_j^\top x$ implies
\[
r_j(\MSE)=a_j^\top x(\MSE)-b_j \longrightarrow a_j^\top x^\ast - b_j =: r_j^\ast,
\]
for every $j=1,\dots,m$. Since $A_S x^\ast=b_S$, we have $r_i^\ast=0$
for $i\in S$; for $j\notin S$ the limit $r_j^\ast$ cannot vanish
(otherwise $j$ would belong to $S$ by definition), hence
$r_j^\ast\neq 0$. This proves (e) and completes the proof of the lemma.
\end{proof}

By Proposition~\ref{prop:weights_on_S_uniform} the branch, if it extends to
$\MSE\downarrow 0$, has a well defined limiting structure:
there exists a core index set $S$ (with $s:=|S|\ge n$) such that the
limit weight vector $w^*$ satisfies $w_i^*=1/s$ for $i\in S$ and
$w_j^*=0$ for $j\notin S$, and the corresponding limiting parameter
vector $x^*$ interpolates the observations on $S$. 

In the next two lemmas, under Assumption \ref{assumption1}, we show the following asymptotic behaviors:  
$$
r_i(\MSE)=O(\MSE), ~ \text{for } i\in S; \qquad w_j(\MSE)=O(\MSE), ~ \text{for } j\notin S; \qquad  \mu(\MSE)=O(|\log\MSE|).
$$

\begin{lemma}\label{lem:weights_residuals_mu}
Assume the hypotheses and notations of Proposition \ref{prop:weights_on_S_uniform}. 
Then there exist positive constants $C_1,C_2$ and $\MSE_0>0$ such that, for all $0<\MSE<\MSE_0$, the following estimates hold:

\begin{enumerate}
\item[(a)] For every index $j\in S^c$ one has
\begin{equation}
\label{eq:wj-bound}
w_j(\MSE) \le C_1\,\MSE \qquad\text{(hence } w_j(\MSE)=O(\MSE)\text{ as }\MSE\to0^+).
\end{equation}
\item[(b)] The primal variable satisfies the linear estimate
\begin{equation}
\label{eq:Deltax-bound}
\|x(\MSE)-x^\ast\| \le C_2\,\MSE,
\end{equation}
where $x^\ast =\lim_{\MSE\to 0^+} x(\MSE)$. Consequently, for every $i\in S$,
\begin{equation}
\label{eq:ri-bound}
r_i(\MSE) = O(\MSE).
\end{equation}
\end{enumerate}
\end{lemma}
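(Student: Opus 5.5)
The plan is to obtain (a) directly from the active MSE constraint, and then (b) by perturbing the normal equations around the limiting interpolation problem on $S$.

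\emph{Step 1 (part (a)).} By Proposition~\ref{prop:weights_on_S_uniform}(e), $r_j(\MSE)\to r_j^\ast\neq 0$ for every $j\in S^c$. Set $\rho:=\min_{j\in S^c}|r_j^\ast|>0$. Then there is $\MSE_0>0$ such that $r_j(\MSE)^2\ge \rho^2/4$ for all $j\in S^c$ and all $0<\MSE<\MSE_0$. Since every term in $\MSE=\sum_i w_i(\MSE)r_i(\MSE)^2$ is nonnegative, we get
\[
w_j(\MSE)\,\frac{\rho^2}{4}\le w_j(\MSE)r_j(\MSE)^2\le \MSE .
\]
This gives \eqref{eq:wj-bound} with $C_1=4/\rho^2$.

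\emph{Step 2 (part (b)).} Since $A_S x^\ast=b_S$, we have $Ax^\ast-b=r^\ast$ with $r^\ast_S=0$. Subtract $A^\top W(\MSE)(Ax^\ast-b)$ from both sides of the normal equations $A^\top W(\MSE)(Ax(\MSE)-b)=0$. This yields
\[
A^\top W(\MSE)A\,\bigl(x(\MSE)-x^\ast\bigr)=-A^\top W(\MSE)r^\ast=-\sum_{j\in S^c} w_j(\MSE)\,r_j^\ast\,a_j .
\]
The right-hand side involves only indices in $S^c$. By Step 1 its norm is at most $C_1\MSE\sum_{j\in S^c}|r_j^\ast|\,\|a_j\|$. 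By Assumption~\ref{assumption1}, $\|(A^\top W(\MSE)A)^{-1}\|\le s_0^{-2}$; alternatively, one can use $M(\MSE)\to M^\ast\succ0$ from the proof of Proposition~\ref{prop:weights_on_S_uniform}(e). Combining the two bounds gives \eqref{eq:Deltax-bound} with $C_2=C_1 s_0^{-2}\sum_{j\in S^c}|r^\ast_j|\,\|a_j\|$.

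\emph{Step 3 (residuals on $S$).} For $i\in S$ we have $r_i(\MSE)=a_i^\top(x(\MSE)-x^\ast)$ because $r_i^\ast=0$. Hence $|r_i(\MSE)|\le\|a_i\|C_2\MSE$, which is \eqref{eq:ri-bound}.

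\emph{Main obstacle.} The only delicate point is Step 1: we need a uniform positive lower bound on $|r_j|$ for $j\notin S$. This is exactly what part (e) of Proposition~\ref{prop:weights_on_S_uniform} provides, so I expect no further difficulty. I would also check that the constant $\MSE_0$ can be chosen common to all the finitely many indices, which is immediate.
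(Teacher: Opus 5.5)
Your proof is correct and follows the paper's argument. Part (a) comes from the active MSE constraint together with a uniform lower bound on $|r_j(\MSE)|$ for $j\in S^c$. Part (b) comes from the normal equations: by (a) their right-hand side is $O(\MSE)$, and it is inverted through the uniform coercivity of the weighted Gram matrix.

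The one difference is a small simplification on your side. You subtract the normal equations evaluated at $x^\ast$, so the full matrix $A^\top W(\MSE)A$ stays on the left and the fixed limit residuals $r^\ast_j$, $j\in S^c$, appear on the right. This spares you the paper's extra perturbation step showing that the $S$-block $A_S^\top W_S A_S$ alone has smallest eigenvalue at least $s_0^2/2$.
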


\begin{proof}
(a). Fix $j\in S^c$. By Proposition \ref{prop:weights_on_S_uniform}(b),  there exists $\delta>0$ and $\MSE_1>0$ such that for all $0<\MSE<\MSE_1$ we have $r_j(\MSE)^2\ge \delta$. Hence, from
\[
w_j(\MSE)\, r_j(\MSE)^2 \le \sum_{i=1}^m w_i(\MSE) r_i(\MSE)^2 = \MSE,
\]
we obtain
\[
w_j(\MSE) \le \frac{\MSE}{r_j(\MSE)^2} \le \frac{\MSE}{\delta}.
\]
Thus \eqref{eq:wj-bound} holds with $C_1:=1/\delta$ (and $\MSE_0\le \MSE_1$).

\smallskip

(b). By reordering rows of $A$ if necessary, write
\[
A = \begin{pmatrix} A_S \\[2pt] A_C \end{pmatrix},\qquad
r(\MSE)=\begin{pmatrix} r_S(\MSE) \\[2pt] r_C(\MSE) \end{pmatrix},\qquad
W(\MSE)=\begin{pmatrix} W_S(\MSE) & 0 \\ 0 & W_C(\MSE) \end{pmatrix},
\]
where $A_S$ has the rows indexed by $S$ and $A_C$ the rows indexed by $S^c$.
The stationarity condition (normal equations) reads
\[
A^\top W r = 0,
\]
which, in block form, becomes
\begin{equation}\label{eq:block_stationarity}
A_S^\top W_S r_S = -A_C^\top W_C r_C.
\end{equation}

We will show that the right-hand side  is $O(\MSE)$, and then
invertibility and coercivity on the $S$-block yields a linear bound on $x-x^\ast$.

First note that for $j\in S^c$ we have $r_j(\MSE)$ bounded and, by part (a), $w_j(\MSE)=O(\MSE)$.
Hence componentwise $(W_C r_C)_j = w_j(\MSE) r_j(\MSE) = O(\MSE)$, therefore
\[
\|W_C r_C\| \le C_3\,\MSE
\]
for some constant $C_3>0$ and all sufficiently small $\MSE$. Consequently,
\begin{equation}
\label{eq:est1}
\|A_C^\top W_C r_C\| \le \|A_C^\top\|\,\|W_C r_C\| \le \|A_C^\top\| C_3\,\MSE =: C_4\,\MSE.
\end{equation}

Next we produce a lower bound for the left-hand side $A_S^\top W_S r_S$ in terms of
$\|x(\MSE)-x^\ast\|$. Since $r_S(\MSE)=A_S x(\MSE)-b_S$ and $r_S(x^\ast)=0$ by definition of $S$,
we have $r_S(\MSE)=A_S \Delta x$ where $\Delta x := x(\MSE)-x^\ast$. Therefore
\[
A_S^\top W_S r_S = A_S^\top W_S A_S \Delta x.
\]
By the global coercivity \eqref{eq:uniform-pd} we have $A^\top W A \succeq s_0^2 I$, and in particular
for any vector $z\in\mathbb{R}^n$,
$z^\top A^\top W A z \ge s_0^2 \|z\|^2$. Choose $z=\Delta x$. Then
\[
\Delta x^\top A^\top W A \Delta x \ge s_0^2 \|\Delta x\|^2.
\]
Expanding the left-hand side in blocks yields
\[
\Delta x^\top A^\top W A \Delta x
= \Delta x^\top \big( A_S^\top W_S A_S + A_C^\top W_C A_C \big)\Delta x.
\]
In particular,
\[
\Delta x^\top A_S^\top W_S A_S \Delta x
\ge s_0^2 \|\Delta x\|^2 - \Delta x^\top A_C^\top W_C A_C \Delta x.
\]
We bound the right-most perturbation term using operator norms:
\[
\big|\Delta x^\top A_C^\top W_C A_C \Delta x\big|
\le \|A_C^\top W_C A_C\|\,\|\Delta x\|^2
\le \|A_C\|^2 \,\|W_C\| \,\|\Delta x\|^2,
\]
where $\|W_C\|=\max_{j\in S^c} w_j(\MSE)=O(\MSE)$ by part (a). Hence, for
$\MSE$ sufficiently small the perturbation term is bounded by $(s_0^2/2)\|\Delta x\|^2$,
so that
\[
\Delta x^\top A_S^\top W_S A_S \Delta x \ge \frac{s_0^2}{2}\,\|\Delta x\|^2.
\]
Therefore, due to the arbitrary nature of the perturbation $\Delta x$, the minimal eigenvalue $\lambda_{\min}(A_S^\top W_S A_S)$ is bounded
below by $c_0:=s_0^2/2>0$ for all sufficiently small $\MSE$. Consequently,
\begin{equation}
\label{eq:est2}
\|A_S^\top W_S A_S \Delta x\| \ge c_0 \|\Delta x\|.
\end{equation}
Combining \eqref{eq:block_stationarity} with the estimates \eqref{eq:est1} and \eqref{eq:est2} we obtain
\[
c_0 \|\Delta x\| \le \|A_S^\top W_S A_S \Delta x\| = \|A_C^\top W_C r_C\| \le C_4\,\MSE.
\]
Hence
\[
\|\Delta x\| = \|x(\MSE)-x^\ast\| \le \frac{C_4}{c_0}\,\MSE.
\]
This proves \eqref{eq:Deltax-bound} with $C_2:=C_4/c_0$. Finally, since $r_S(\MSE)=A_S\Delta x$, we deduce
\[
\|r_S(\MSE)\| \le \|A_S\|\,\|\Delta x\| \le \|A_S\| C_2\,\MSE,
\]
and componentwise $r_i(\MSE)=O(\MSE)$ for each $i\in S$, proving \eqref{eq:ri-bound}. 
\end{proof}
%%%%%%%%%%%

\begin{lemma}\label{lemma:mu-log}
Under the hypotheses of Lemma~\ref{lem:weights_residuals_mu} 
the following properties hold true as $\MSE\to0^+$:
\begin{enumerate}
\item $\displaystyle \mu(\MSE)=O\!\big(|\log(\MSE)|\big)$.
\item For every $i\in S$, $\displaystyle \mu(\MSE)\,r_i(\MSE)^2 \longrightarrow 0$.
\end{enumerate}
\end{lemma}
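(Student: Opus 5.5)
The plan is to obtain both statements from the Gibbs representation of Lemma~\ref{lem:partition}, combined with the rates of Lemma~\ref{lem:weights_residuals_mu}. For any two indices $i,j$ the Gibbs equations \eqref{w-mu} give the exact ratio identity
\[
\mu(\MSE)\,\bigl(r_j(\MSE)^2-r_i(\MSE)^2\bigr)=\log\frac{w_i(\MSE)}{w_j(\MSE)},
\]
in which $Z(\MSE)$ cancels. I will take $i\in S$ and $j\in S^c$. By Proposition~\ref{prop:weights_on_S_uniform}(b),(e) there is $\delta>0$ with $r_j(\MSE)^2\ge\delta$ for all $j\in S^c$ and all small $\MSE$; this $\delta$ is uniform because there are finitely many indices. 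Since also $r_i(\MSE)\to0$, the factor multiplying $\mu$ is at least $\delta/2$ for small $\MSE$. Moreover $w_i(\MSE)\le 1$ (indeed $w_i\to 1/s$). Hence
\[
\mu(\MSE)\le \frac{2}{\delta}\,\log\frac{1}{w_j(\MSE)}.
\]
An upper bound on $\mu$ therefore reduces to a lower bound, polynomial in $\MSE$, on at least one weight outside $S$.

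The main step, and the one I expect to be the real obstacle, is this lower bound. Lemma~\ref{lem:weights_residuals_mu}(a) only bounds $w_j$ from above, so I will extract the lower bound from the MSE constraint instead. I split the constraint as
\[
\MSE=\sum_{i\in S}w_i r_i^2+\sum_{j\in S^c}w_j r_j^2 .
\]
By Lemma~\ref{lem:weights_residuals_mu}(b) we have $r_i=O(\MSE)$ on $S$ and $w_i\le1$, so the first sum is $O(\MSE^2)$. Hence $\sum_{j\in S^c}w_j r_j^2\ge \MSE/2$ for small $\MSE$.

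The residuals are uniformly bounded, since $r(\MSE)\to r^\ast$ by Proposition~\ref{prop:weights_on_S_uniform}(e); write $r_j^2\le R_{\max}$. It follows that $\sum_{j\in S^c}w_j\ge \MSE/(2R_{\max})$. Consequently, for each small $\MSE$ there is an index $j^\ast=j^\ast(\MSE)\in S^c$ with $w_{j^\ast}(\MSE)\ge \MSE/(2mR_{\max})$. The index may change with $\MSE$, but the constants $\delta$ and $R_{\max}$ above do not depend on it, so this causes no difficulty.

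Inserting $j=j^\ast(\MSE)$ into the ratio bound gives
\[
\mu(\MSE)\le \frac{2}{\delta}\log\frac{2mR_{\max}}{\MSE}=O(|\log\MSE|),
\]
which proves (1). For (2), fix $i\in S$ and combine (1) with $r_i(\MSE)^2=O(\MSE^2)$ from Lemma~\ref{lem:weights_residuals_mu}(b). This yields $\mu(\MSE)\,r_i(\MSE)^2=O(\MSE^2|\log\MSE|)\to0$.

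If the $O(\MSE^2)$ estimate for the $S$-part of the constraint were unavailable, the weaker bound $o(\MSE)$ would still suffice for the argument above. That weaker bound could be checked directly from $r_i\to0$ together with the block stationarity relation \eqref{eq:block_stationarity}.
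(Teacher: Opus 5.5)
Your proof is correct and follows essentially the same route as the paper. Both arguments use the Gibbs representation, split the MSE constraint into an $O(\MSE^2)$ contribution from $S$ plus the contribution from $S^c$, and use the uniform lower bound $\delta$ on the residuals off $S$, arriving at the same bound $\mu\le\frac{2}{\delta}\log(C/\MSE)$. The only difference is bookkeeping: the paper bounds the $S^c$ contribution from above via $Z(\MSE)\ge s\,e^{-\mu C_r\MSE^2}$, whereas you cancel $Z$ through the ratio $w_i/w_j$ and apply a pigeonhole lower bound to a single outlier weight, which is the same inequality read in the opposite direction.
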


\begin{proof}
We keep the notation $\mu=\mu(\MSE)$ and $r=r(\MSE)$ throughout this proof. By
Lemma~\ref{lem:weights_residuals_mu} we have the uniform bounds
(for $\MSE$ small)
\[
r_i(\MSE)=O(\MSE)\quad(i\in S),\qquad w_j(\MSE)=O(\MSE)\quad(j\in S^c).
\]
In particular, there exists $C_r>0$ and $\MSE_0>0$ such that for $0<\MSE<\MSE_0$ and
every $i\in S$,
\begin{equation}
\label{eq:ri-bound1}
r_i(\MSE)^2 \le C_r\,\MSE^2.
\end{equation}

Let $\delta>0$ be the uniform lower bound on squared residuals on the inactive set, i.e.\ for $j\in S^c$ and $\MSE$ sufficiently small
\begin{equation}
\label{eq:rj-bound2}
r_j(\MSE)^2 \ge \delta.
\end{equation}
Denote $s:=|S|\ge n$. Recall the normalization 
\[
Z(\MSE)=\sum_{k=1}^m e^{-\mu(\MSE) r_k(\MSE)^2},
\qquad w_k(\MSE)=\frac{e^{-\mu(\MSE) r_k^2(\MSE)}}{Z(\MSE)}.
\]

First of all, we determine a lower bound for $Z(\MSE)$. By \eqref{eq:ri-bound1} we have for all $i\in S$ and $\MSE<\MSE_0$,
\[
e^{-\mu r_i^2} \ge e^{-\mu C_r \MSE^2}.
\]
Summing over $i\in S$ yields
\[
\sum_{i\in S} e^{-\mu r_i^2} \ge s\, e^{-\mu C_r \MSE^2}.
\]
Therefore the partition function satisfies
\begin{equation}
\label{eq:Zlb3}
Z(\MSE) \;=\; \sum_{i\in S} e^{-\mu r_i^2} + \sum_{j\notin S} e^{-\mu r_j^2}
\;\ge\; s\, e^{-\mu C_r \MSE^2}.
\end{equation}

Now we determine an upper bound of the mean squared error, after splitting it in the two contributions coming from the core set $S$ and its complimentary $S^C$:
\begin{equation}
\label{eq:MSE-complete}
\MSE \;=\;  \sum_{i\in S} w_i r_i^2 + \sum_{j\notin S} w_j r_j^2. 
\end{equation}
For the active set, using \eqref{eq:ri-bound1} and the boundedness of the $w_i(\MSE)$'s
on $S$ (they converge to positive limits), there exists $C_S>0$ with
\begin{equation}
\label{eq:activeset7}
\sum_{i\in S} w_i r_i^2 \le C_S\, \MSE^2.
\end{equation}
For $j\notin S$, exploiting \eqref{eq:rj-bound2}, we get
\[
\sum_{j\notin S} w_j r_j^2
= \sum_{j\notin S} \frac{e^{-\mu r_j^2}}{Z(\MSE)}\, r_j^2
\le \frac{1}{Z(\MSE)} \sum_{j\notin S} r_j^2\, e^{-\mu \delta}.
\]
Let $R^2(\MSE):=\max_{j\notin S} r_j^2(\MSE)$, which is a finite quantity bounded away from zero, independently of the choice of $\MSE\in(0,\MSE_0)$. Then
\begin{equation}
\label{eq:inactiveset4}
\sum_{j\notin S} w_j r_j^2 \le \frac{(m-s) R^2}{Z(\MSE)}\, e^{-\mu\delta}.
\end{equation}
Combine \eqref{eq:Zlb3} and \eqref{eq:inactiveset4} to obtain
\begin{equation}
\label{eq:activeset5}
\sum_{j\notin S} w_j r_j^2 \le \frac{(m-s) R^2}{s}\, e^{-\mu(\delta - C_r \MSE^2)}, \qquad 0<\MSE<\MSE_0.
\end{equation}
Choose $\MSE_1\in(0,\MSE_0)$ so small that for $0<\MSE<\MSE_1$ we have
$C_r \MSE^2 \le \delta/2$. Then for such $\MSE$,
\[
\delta - C_r \MSE^2 \ge \delta/2 >0,
\]
and \eqref{eq:activeset5} gives the exponential bound
\begin{equation}
\label{eq:activeset6}
\sum_{j\notin S} w_j r_j^2 \le C'\, e^{-\mu(\delta/2)}, \qquad 0<\MSE<\MSE_0,
\end{equation}
for some constant $C'>0$ independent of $\MSE$.

Using \eqref{eq:activeset6} and \eqref{eq:activeset7}, from \eqref{eq:MSE-complete} we deduce that for $0<\MSE<\MSE_1$,
\[
\MSE \le C_S \MSE^2 + C' e^{-\mu(\delta/2)}.
\]
For $\MSE$ small, the factor $1- C_S \MSE$ is bounded below by $1/2$; hence, for such values of $\MSE$, 
\[
\frac{\MSE}{2} \le C' e^{-\mu(\delta/2)}.
\]
This implies
\[
e^{\mu(\delta/2)} \le \frac{2C'}{\MSE},
\]
and taking logarithms yields the desired upper bound
\[
\mu \le \frac{2}{\delta}\log\!\bigg(\frac{2C'}{\MSE}\bigg) = O(\log(1/\MSE)).
\]

Finally, combining this result with \eqref{eq:ri-bound1}, for any fixed $i\in S$,  we have 
\[
\mu(\MSE)\, r_i(\MSE)^2 \le C_r\, \mu(\MSE)\, \MSE^2 = O\!\big(\log(1/\MSE)\cdot \MSE^2\big). \qedhere
\]
\end{proof}

%%%%%%%%%%%%%%

We now investigate the behavior of the matrix
\[
B(y) = I_m - 2\mu(\MSE)\,\diag\!\bigl(r_1(\MSE)^2,\dots,r_m(\MSE)^2\bigr)
\]
 in the asymptotic regime $\MSE\downarrow 0$. In Proposition \ref{prop:J-invertible}, this matrix played a central role in establishing sufficient conditions for the invertibility of the Jacobian \eqref{J} and the positive definiteness of the matrix $\widetilde H$ defined in \eqref{Htmatrix}. We recall that these properties are essential for the well-posedness of the differential problem and for concluding that the points along the resulting solution branch are indeed entropy maximizers. Interestingly, such conditions also turn out to be necessary: this is the content of the following proposition.

\begin{proposition}\label{prop:B_positive_on_V}
Under the hypotheses of Proposition~\ref{prop:weights_on_S_uniform}, let
\[
B(\MSE) := I - 2\,\mu(\MSE)\,\diag\bigl(r(\MSE)^2\bigr) \quad \text{and} \quad V:=\operatorname{range}(W^{1/2}A).
\]
There exists $\MSE_0>0$ such that for all $0<\MSE<\MSE_0$ the restriction of $B(\MSE)$
to $V$ is positive definite, namely
\[
\exists\,\MSE_0>0:\qquad
\forall\,0<\MSE<\MSE_0,\quad v^\top B(\MSE) v >0\quad\text{for all }v\in V\setminus\{0\}.
\]
\end{proposition}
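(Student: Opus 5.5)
The plan is to write an arbitrary $v\in V$ as $v=W^{1/2}Az$ with $z\in\RR^n$. This gives
\[
v^\top B(\MSE)v=\sum_{i=1}^m w_i(\MSE)\,(a_i^\top z)^2\bigl(1-2\mu(\MSE)r_i(\MSE)^2\bigr),
\]
and we split the sum over the core set $S$ and its complement $S^c$, using the asymptotic information from Proposition~\ref{prop:weights_on_S_uniform} and Lemmas~\ref{lem:weights_residuals_mu}--\ref{lemma:mu-log}. Since $W^{1/2}A$ has full column rank, $v\neq0$ if and only if $z\neq0$. It therefore suffices to show the quadratic form in $z$ is bounded below by $c\|z\|^2$ with $c>0$ for all small $\MSE$.

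On $S$, Lemma~\ref{lemma:mu-log}(2) gives $\mu r_i^2\to0$. Hence $1-2\mu r_i^2\ge 1/2$ for small $\MSE$. Moreover $w_i\to 1/s$, so $w_i\ge 1/(2s)$ eventually. The $S$-part is then at least $\frac{1}{4s}\|A_S z\|^2\ge \frac{\sigma_{\min}(A_S)^2}{4s}\|z\|^2$. Here $A_S$ has full column rank by Lemma~\ref{lem:core_index_set} and Proposition~\ref{prop:weights_on_S_uniform}(a). Alternatively, one can use the bound $A_S^\top W_SA_S\succeq (s_0^2/2)I$ derived inside the proof of Lemma~\ref{lem:weights_residuals_mu}.

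The main obstacle is the $S^c$-part, where $1-2\mu r_j^2$ is very negative because $\mu\to\infty$ while $r_j^2\ge\delta$. We must show that the weight kills it, i.e. that $w_j\mu r_j^2\to0$ for $j\in S^c$. For this I would use the Gibbs form from Lemma~\ref{lem:partition} together with the lower bound \eqref{eq:Zlb3}:
\[
w_j\mu r_j^2\le \frac{\mu r_j^2 e^{-\mu r_j^2}}{s\,e^{-\mu C_r\MSE^2}}\le \frac{e^{\mu C_r\MSE^2}}{s}\,\sup_{t\ge\mu\delta} t e^{-t}.
\]
Since $\mu\to\infty$ (Proposition~\ref{prop:mu_monotone}(4)) and $\mu\MSE^2\to0$ (Lemma~\ref{lemma:mu-log}(1)), the right-hand side tends to $0$. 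As a backup, if one prefers to avoid the divergence of $\mu$, one can argue directly from $w_j r_j^2\le\MSE$ and $\mu=O(|\log\MSE|)$: this gives $w_j\mu r_j^2\le \MSE\,\mu=O(\MSE|\log\MSE|)\to0$. This second route is simpler and does not require $\mu\to\infty$, so I would use it.

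Consequently $|S^c\text{-part}|\le \sum_{j\in S^c}\bigl(w_j+2w_j\mu r_j^2\bigr)(a_j^\top z)^2\le \eta(\MSE)\|A\|^2\|z\|^2$, where $\eta(\MSE)=\max_{j\in S^c}(w_j+2w_j\mu r_j^2)\to0$ by Lemma~\ref{lem:weights_residuals_mu}(a) and the estimate above. Choosing $\MSE_0$ so small that $\eta(\MSE)\|A\|^2$ is less than half the $S$-part constant yields $v^\top B(\MSE)v\ge c\|z\|^2>0$ for all $v\in V\setminus\{0\}$ and $0<\MSE<\MSE_0$.
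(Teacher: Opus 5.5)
Your proof is correct, but it is organized differently from the paper's.

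The paper never splits the quadratic form. It rewrites positivity of $B(\MSE)$ on $V$ as $2\mu\,\lambda_{\max}<1$, where $\lambda_{\max}$ is the maximum of the Rayleigh quotient $z^\top A^\top\diag(w r^2)Az/z^\top A^\top WAz$. It bounds this quotient by $\|A\|^2\max_j(w_jr_j^2)/s_0^2$, using the global coercivity constant of Assumption~\ref{assumption1}. It then proves $\mu w_jr_j^2\to0$ index by index: on $S$ through $w_i\to1/s$ and $\mu r_i^2\to0$, and on $S^c$ through the exponential bound \eqref{eq:activeset6} together with $\mu\to+\infty$.

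You instead draw positivity from the $S$-block alone, absorbing the negative contribution of the $S$-indices via $1-2\mu r_i^2\ge 1/2$, and treat only the $S^c$ terms as a perturbation. Of your two justifications for the $S$-block, the bound $A_S^\top W_SA_S\succeq (s_0^2/2)I$ is the more self-contained, since it follows directly from Assumption~\ref{assumption1} and $w_j\to0$ off $S$. Your preferred estimate on $S^c$, $w_j\mu r_j^2\le\mu\MSE=O(\MSE|\log\MSE|)$, is more elementary than the paper's. It also does not rely on Proposition~\ref{prop:mu_monotone}(4), whose proof in the paper is only sketched.

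Moreover, $w_jr_j^2\le\MSE$ holds for \emph{every} index $j$, so the same observation collapses the whole argument:
$v^\top B(\MSE)v\ge z^\top A^\top WAz-2\mu\max_j(w_jr_j^2)\|Az\|^2\ge(s_0^2-2\mu\MSE\|A\|^2)\|z\|^2$.
This is positive for small $\MSE$ by Lemma~\ref{lemma:mu-log}(1) alone. It needs no $S/S^c$ split, and no use of $w_i\to1/s$, $\mu r_i^2\to0$, or the rank of $A_S$.
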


\begin{proof}
We use the Rayleigh--quotient characterization on $V$. Let $P_V$ denote the orthogonal
projector onto $V$. The restriction of $B$ to $V$ is positive definite iff
\[
2\mu(\MSE)\,\lambda_{\max}\!\big(P_V\,\diag(r^2)\,P_V\big) < 1,
\]
where $\lambda_{\max}(\cdot)$ denotes the largest eigenvalue. Since any $v\in V$ may be cast as $v=W^{1/2}A z$ for some $z$, we conclude that
\[
\lambda_{\max} = \max_{z\neq0}
\frac{z^\top A^\top \big(\diag(w\,r^2)\big) A z}{z^\top A^\top W A z}.
\]
Hence,
\begin{equation}
\label{eq:lambda_bound}
\lambda_{\max} \le \frac{\|A\|^2\,\|\diag(w r^2)\|}{s_0^2}
= \frac{\|A\|^2}{s_0^2}\,\max_{j} \bigl(w_j r_j^2\bigr).
\end{equation}
We split the maximum over indices in $S$ and $S^c$.

For $i\in S$, we have $w_i(\MSE)\to 1/s$ from Proposition \ref{prop:weights_on_S_uniform}, and 
$\mu r_i^2\to0$ from Lemma \ref{lemma:mu-log}. Therefore
\[
2\mu(\MSE)\,w_i(\MSE)\,r_i(\MSE)^2 = 2 w_i(\MSE)\,(\mu(\MSE) r_i^2(\MSE))
\longrightarrow 0\quad (i\in S).
\]

For $j\notin S$ we consider the bound \eqref{eq:activeset6} derived in Lemma \ref{lemma:mu-log} and, using the same notations, we deduce that
$$
 w_j(\MSE) r_j^2(\MSE) \le C'\, e^{-\mu(\MSE)(\delta/2)} \qquad \text{($\MSE$ small)}.
$$
Consequently,
\[
2\mu(\MSE)\, w_j(\MSE)\, r_j(\MSE)^2
\le 2\mu(\MSE)\, C'\, e^{-\mu(\MSE)(\delta/2)}.
\]
Using the elementary fact that for any $a>0$ the function $t\mapsto t e^{-a t}$ tends to $0$ as $t\to+\infty$,
and since $\mu(\MSE)\to+\infty$ as $\MSE\to0$, the right-hand side vanishes as $\MSE\to0^+$.
Hence the contribution from every $j\notin S$ also satisfies
\[
2\mu(\MSE)\,w_j(\MSE)\,r_j(\MSE)^2 \longrightarrow 0.
\]

Combining the two cases and using the bound \eqref{eq:lambda_bound}, we obtain
\[
2\mu(\MSE)\,\lambda_{\max}\!\big(P_V\diag(r^2)P_V\big)
\le \frac{2\mu(\MSE)\|A\|^2}{s_0^2}\max_j (w_j r_j^2)
\longrightarrow 0 \qquad \text{as } \MSE\to0^+.
\]
Therefore, there exists $\MSE_0>0$ such that for $0<\MSE<\MSE_0$ the left-hand side is strictly less than $1$,
hence $B(\MSE)$ is positive definite on $V$ for all such $\MSE$.
This proves the proposition.
\end{proof}
%%%%%%%%%%%%%%%%%%%%%%%%%%%%%
\section{Numerical Illustrations}
\label{sec_numerics}
The aim of this section is primarily methodological. 
The numerical experiments are designed to illustrate and validate the theoretical 
properties established in the previous sections. 
Applications of the maximum-entropy weighted least squares  framework 
to more challenging and real-world datasets, as well as comparisons with state-of-art robust techniques, have already been discussed in previous works \cite{AmBrIa26,AmFaGrIaLaMaNoRu,BrGiIaRu24,DeFaIaLoMaRu25,GiIa21}. 
Here we deliberately consider controlled synthetic examples and a simple model  that allow  a transparent interpretation of the results,  highlighting  the structural behavior of the solution path $E \mapsto (w(E),x(E))$ 
and visualizing the behavior predicted by the theory. All numerical experiments were performed in Matlab\textsuperscript{\textregistered}  (version R2024b) on a computer equipped with a 3.6 GHz Intel i9 processor and 32 GB of RAM. 

From the computational point of view, the differential system obtained in 
Section \ref{subsec_IVP} is treated as a mass-matrix ODE. 
Rather than explicitly inverting the Jacobian matrix $J$, we retain it as 
a mass matrix and solve the resulting system using the Matlab solver 
\texttt{ode23t}. Since this solver handles initial value problems but not 
final value problems, we introduce the change of variable $t = -E$, 
so that the evolution with decreasing MSE is reformulated as a standard IVP. 
The integration is therefore performed forward in $t$, which corresponds to 
moving from larger to smaller values of $E$ along the global solution branch. 
The resulting differential problem takes the form
\begin{equation}
\label{FDE_final} \left\{
\begin{array}{ll}
J(y(t))\, y'(t) = -e_2, & t\in [-\MSEuw,-\MSE_{\mathrm f}], \\[2mm]
y(-\MSEuw)=y^\ast,
\end{array} \right.
\end{equation} 
where $y(t)=(\lambda(t),\mu(t),w(t),x(t))^\top$, $\MSE_{\mathrm f}$ is the user desired MSE value, and $y^\ast$ denotes the 
uniform-weight solution. 

However, in the figures accompanying each example below, we display the 
solution dynamics with respect to $\MSE$ rather than $t$, in order to 
emphasize the natural parametrization of the problem.

We emphasize that the solution strategy adopted here, based on the 
integration of the associated initial value problem, represents a novel 
computational approach within the MEWLS framework. In previous works, 
the nonlinear system \eqref{sys1} was solved by progressively reducing the 
MSE through empirical continuation strategies.  In contrast, the present formulation embeds the variation of the MSE 
directly into the differential structure of the problem, so that the 
step-size adaptation is intrinsically handled by the ODE solver. 
This leads to a more systematic and dynamically consistent exploration 
of the global solution branch.

We finally remark that the Jacobian matrix $J(y)$ in \eqref{FDE_final}  exhibits a highly sparse 
structure, reflecting the intrinsic coupling pattern of the underlying 
optimality system. Exploiting this sparsity is crucial for large-scale 
implementations. However, the design of an efficient and fully structured 
numerical treatment requires a dedicated and nontrivial analysis, 
which falls outside the scope of the present paper and will be 
investigated in future work.

\subsection{Example 1}

We consider a simple overdetermined linear system arising from 
a regression problem in $\mathbb{R}^2$. Ten points (blue circles in Figure~\ref{fig1}) are uniformly distributed along the line
\begin{equation}
\label{model1}
y=\tfrac12 x, \qquad x\in[0,1].
\end{equation}
These points represent the \emph{inliers}.  We then add ten further points having the same abscissae but with ordinates 
randomly distributed in the upper part of the square $[0,1]\times[0,1]$. 
These additional data (red asterisks in Figure~\ref{fig1}) are clear \emph{outliers}, 
as they significantly deviate from the original linear trend.

\begin{figure}
\begin{center}
\includegraphics[width=.85\textwidth]{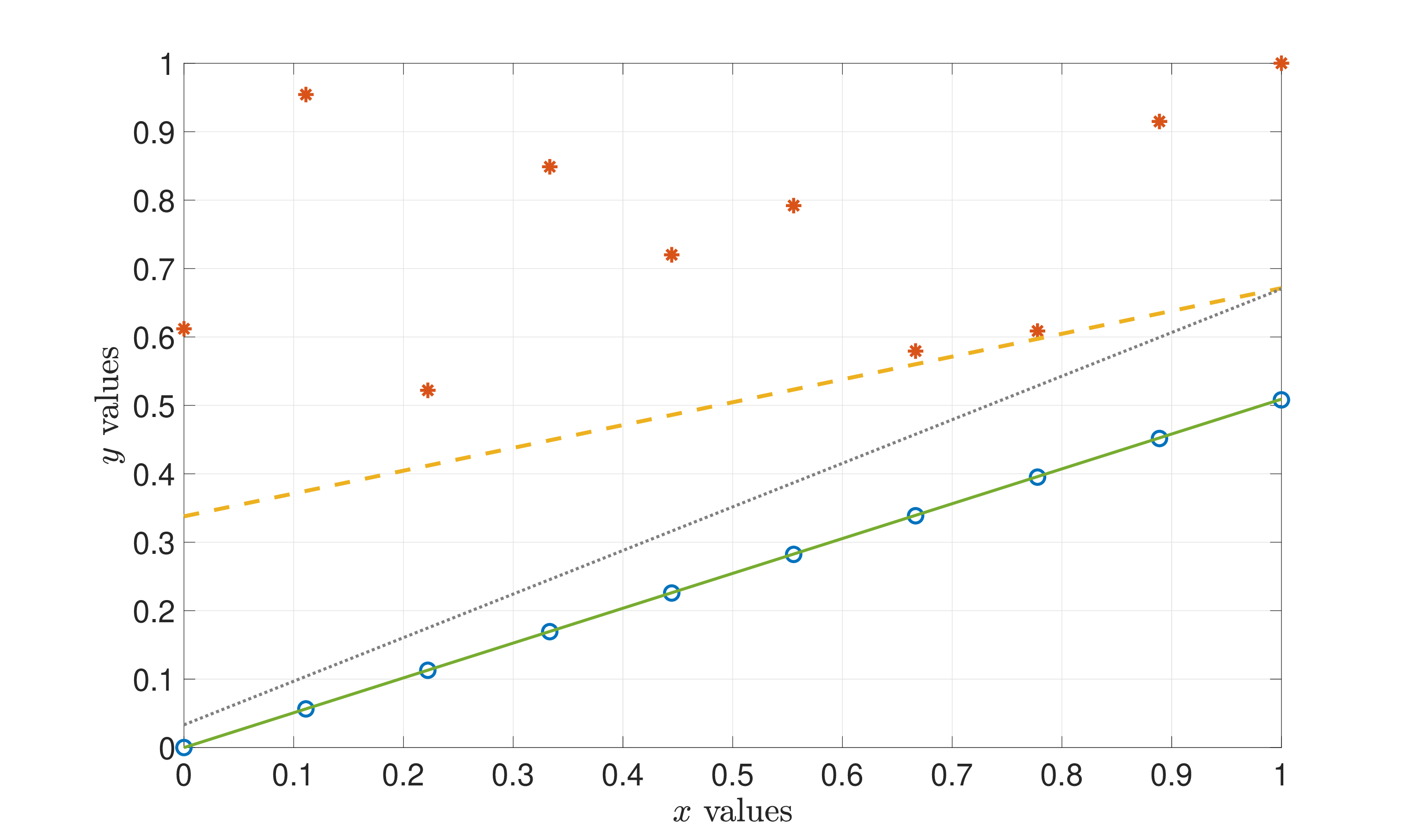} 
\caption{ Blue circles: inliers lying exactly on the line $y=\tfrac12 x$. 
Red asterisks: outliers.  Yellow dashed line: OLS regression with uniform weights on the full dataset. 
Green solid line: MEWLS regression at the final MSE level, coinciding with the outlier-free solution. 
Dotted gray line: intermediate MEWLS configuration corresponding to $E=3\cdot 10^{-2}$.  
\label{fig1}}
\end{center}
\end{figure}

The OLS regression line computed with uniform weights on the full 
20-point dataset (yellow dashed line in Figure~\ref{fig1}) is visibly attracted 
by the outliers and deviates substantially from the line determined by 
the inliers alone. We initialize the IVP at the final value
$ E = \mathrm{MSE}_{\mathrm{uw}},$
corresponding to the uniform-weight solution, and integrate the system 
down to $E = 1\times 10^{-4}$, a value chosen for graphical clarity.

Figure~\ref{fig2} shows the evolution of the twenty weights as $E$ decreases. 
The plot must be read from right to left, since decreasing $E$ corresponds 
to forward integration in the transformed time variable $t=-E$.

\begin{figure}
\begin{center}
\includegraphics[width=.85\textwidth]{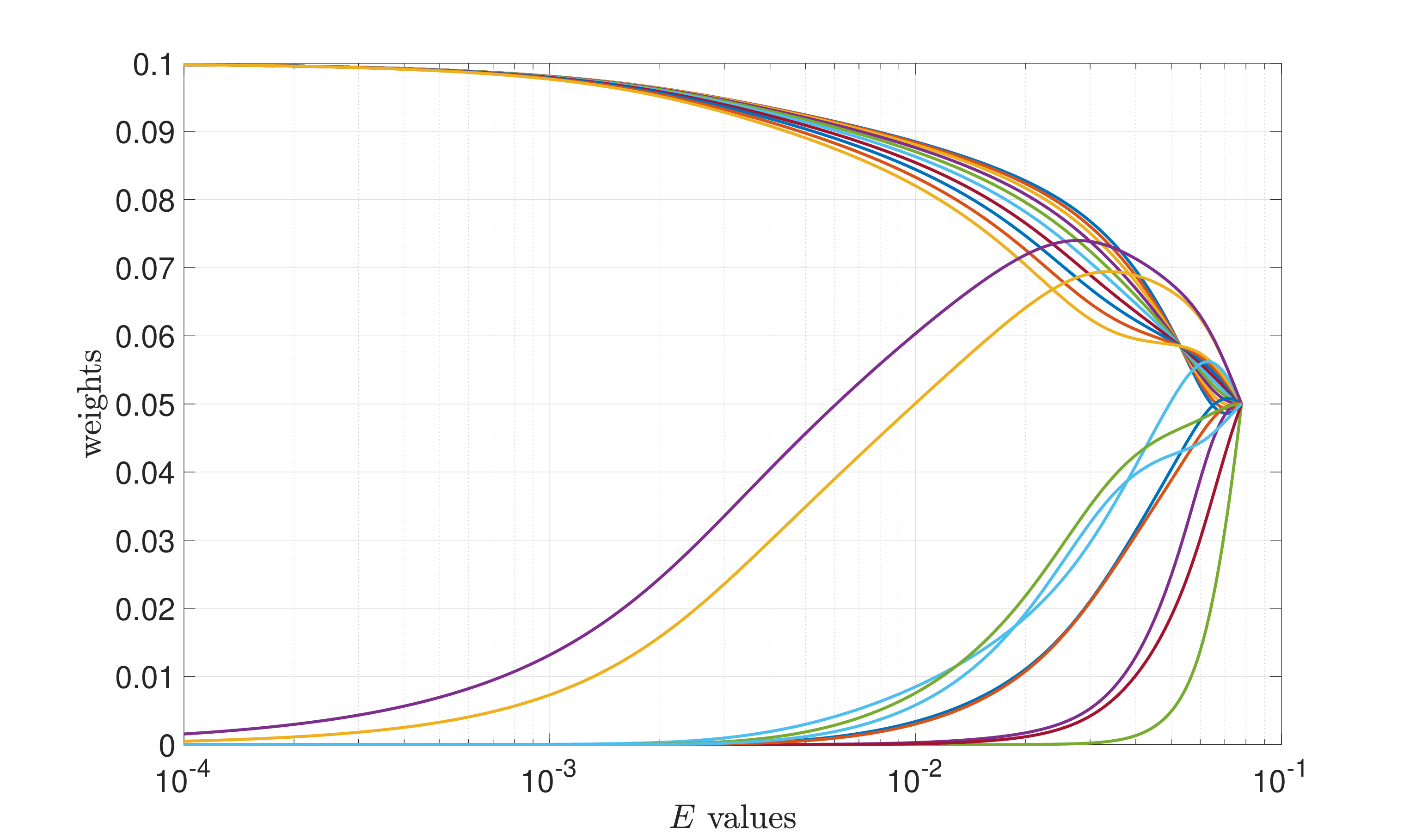} 
\caption{Evolution of the twenty weights $w_i(E)$ as the MSE decreases 
(from right to left).  The weights associated with the outliers decay to zero, while the 
remaining weights converge to the uniform distribution $1/10$, 
in agreement with Proposition~\ref{prop:weights_on_S_uniform}.  
\label{fig2}}
\end{center}
\end{figure}

In this first experiment the inliers lie \emph{exactly} on the model line 
\eqref{model1}. Hence, the assumptions of Proposition~\ref{prop:weights_on_S_uniform} 
are fully satisfied: there exists a subset $S$ of indices corresponding to 
vanishing residuals, and the theory predicts a uniform distribution of the 
weights over $S$ in the limit configuration.  This is precisely what is observed numerically:
\begin{itemize}
\item the ten weights associated with the outliers decay towards zero;
\item the remaining ten weights converge to the constant value $1/10$.
\end{itemize}

Thus, in the limit configuration, the method automatically restores 
a uniform weighting on the inlier subset while suppressing the influence 
of the outliers. As a consequence, the final regression line coincides, 
up to numerical accuracy, with the one obtained from the original 
10-point dataset without outliers, and is displayed as the green solid line 
passing through the blue circles in Figure~\ref{fig1}.

It is worth noting that the decay of the outlier weights is not uniform. 
Two weights, corresponding to points relatively close to the inlier line, 
initially increase together with the inlier weights, reaching a maximum at approximately
$E = 3 \cdot 10^{-2}$, represented by the gray dotted regression line in Figure~\ref{fig1}. Only when $E$ is reduced below a critical threshold does the entropy–MSE 
balance become incompatible with their inclusion in the effective inlier set. 
At that stage, the constraint on the MSE enforces a redistribution of mass, 
and these weights also start to decay toward zero.

This behavior illustrates a fundamental structural feature of the MEWLS framework: 
the classification of data points as inliers or outliers is not imposed 
a priori, but emerges continuously along the solution path as a consequence 
of the global entropy-accuracy trade-off.

\begin{figure}
\begin{center}
\includegraphics[width=.5\textwidth]{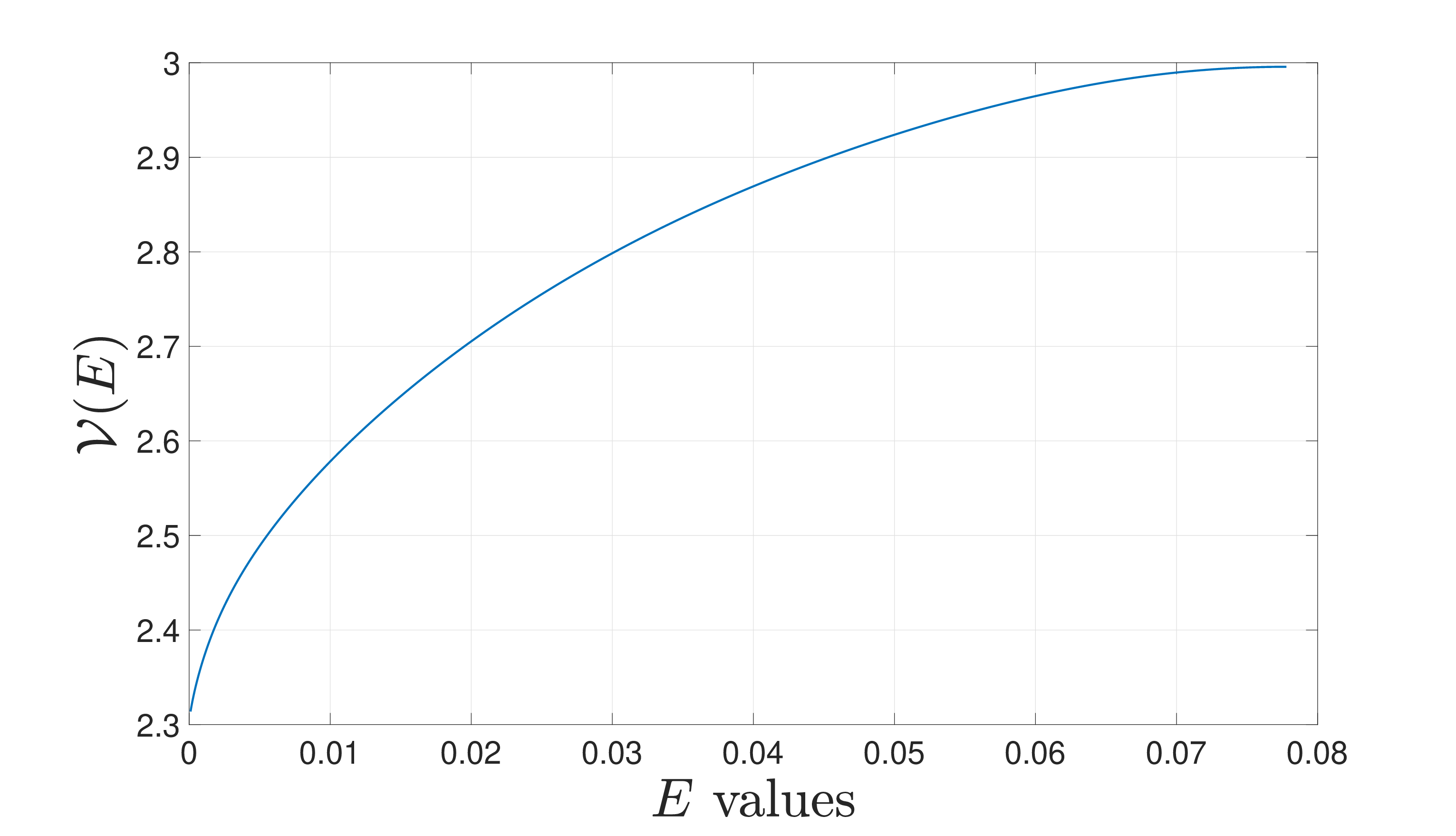} \hspace*{-0.5cm}
\includegraphics[width=.5\textwidth]{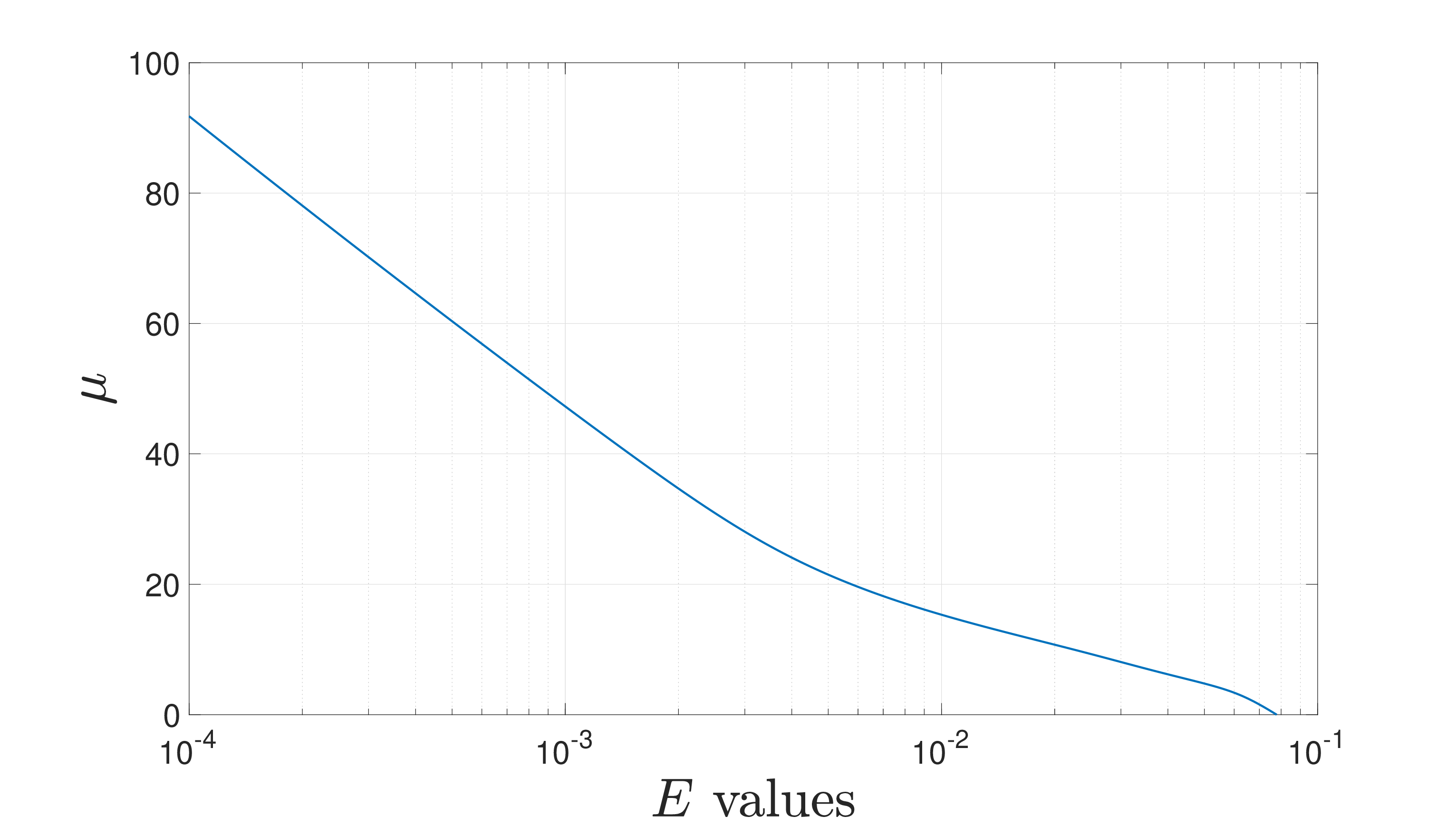} 
\caption{Left: value function $\mathcal V(E)$, exhibiting strict concavity 
as predicted by Proposition~\ref{prop:mu_monotone}. 
Right: Lagrange multiplier $\mu(E)$ plotted with a semilogarithmic scale 
on the horizontal axis, confirming positivity, strict monotonicity, 
and the asymptotic behavior described in Lemma~\ref{lemma:mu-log}.  
\label{fig3}}
\end{center}
\end{figure}

In the left panel of Figure~\ref{fig3} we plot the value function $\mathcal V(E)$. 
Its strict concavity is clearly visible, in agreement with 
Proposition~\ref{prop:mu_monotone}.

The right panel of Figure~\ref{fig3} shows the Lagrange multiplier $\mu(E)$ 
with a semilogarithmic scale on the horizontal axis. The plot confirms:
\begin{itemize}
\item strict positivity of $\mu(E)$ (Proposition~\ref{prop:mu_monotone});
\item strict monotonicity with respect to $E$ (Proposition~\ref{prop:mu_monotone});
\item the asymptotic behavior described in Lemma~\ref{lemma:mu-log}.
\end{itemize}

The left panel of Figure~\ref{fig4} reports the smallest eigenvalue of the matrix 
$\widehat S$ defined in \eqref{Shmatrix}. 
Its positivity is equivalent to the positive definiteness of the matrix 
$B(y)$ (defined in \eqref{Bmatrix}) on the subspace
$V := \mathrm{Range}(W^{1/2}A) \subseteq \mathbb{R}^m$, 
as established in Proposition~\ref{prop:B_positive_on_V}.

The numerical evidence confirms that this structural property 
is preserved along the entire solution branch, thus supporting 
the theoretical framework developed in the previous sections.

\begin{figure}
\begin{center}
\includegraphics[width=.5\textwidth]{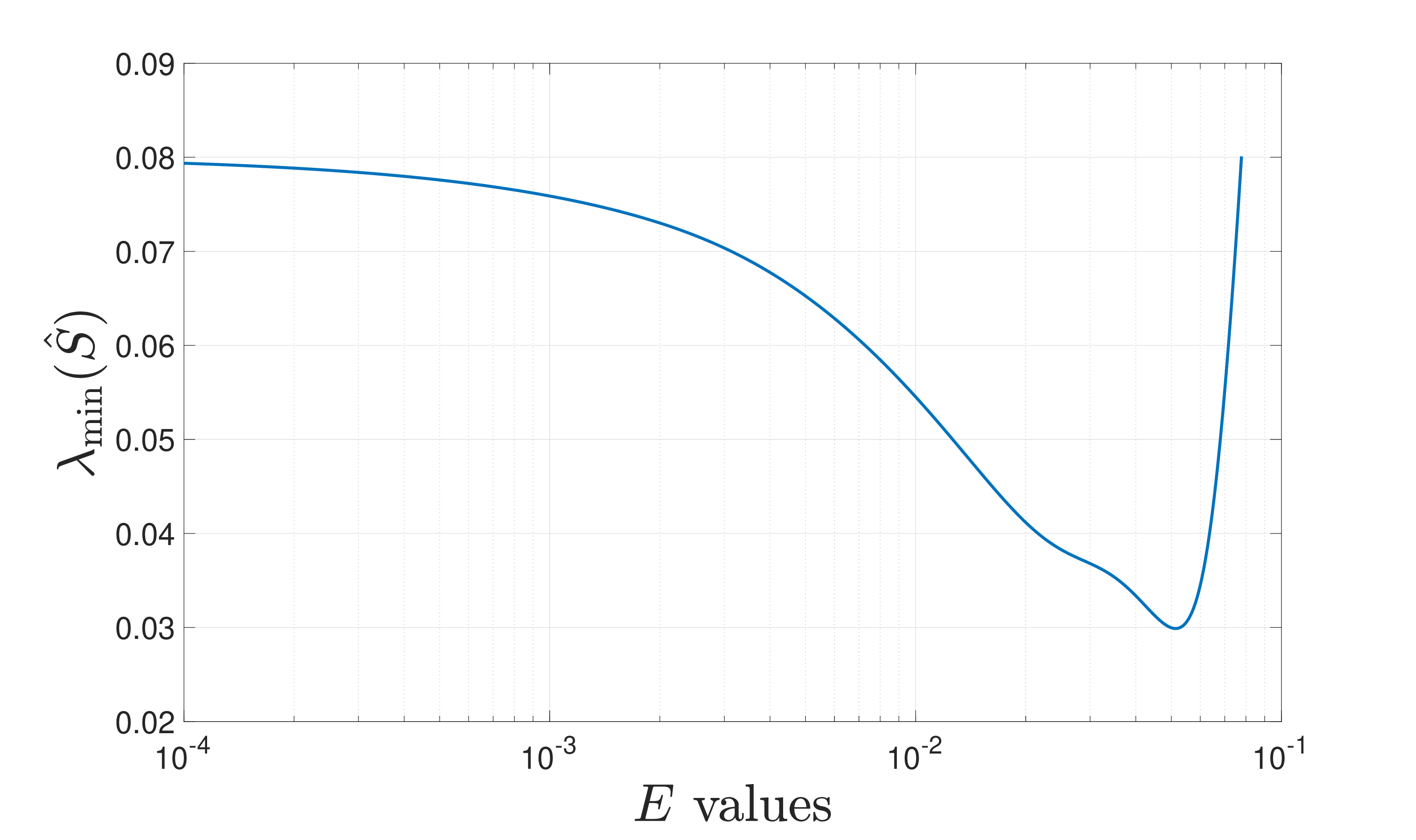} \hspace*{-0.5cm}
\includegraphics[width=.5\textwidth]{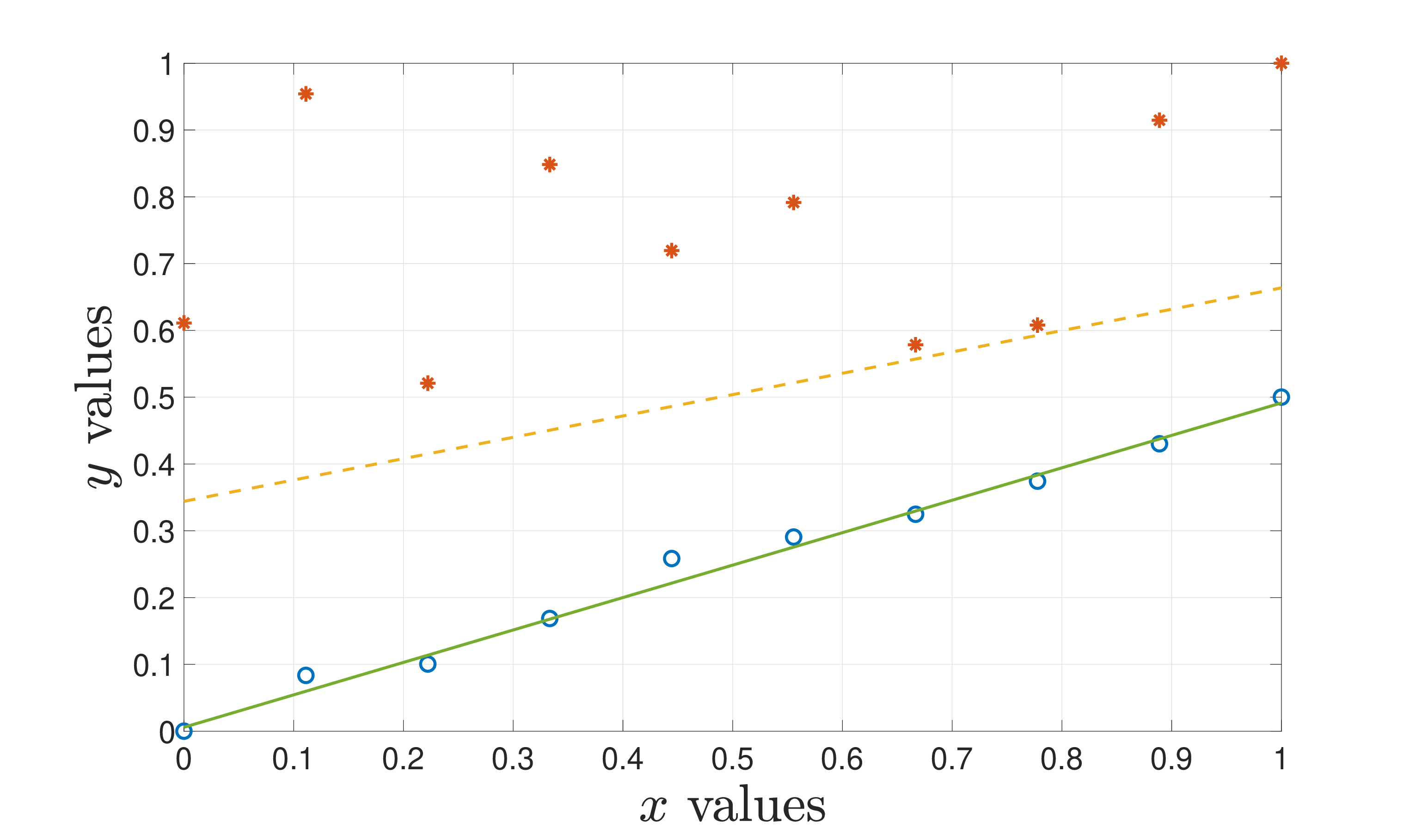} 
\caption{Left: smallest eigenvalue of the matrix $\widehat S$ along the solution branch, 
confirming the positivity property stated in 
Proposition~\ref{prop:B_positive_on_V}. 
Right: noisy-inlier experiment. 
Yellow dashed line: OLS regression in the presence of outliers. 
Green solid line: MEWLS regression obtained after reducing the MSE to the 
outlier-free level, showing effective suppression of the outliers.  
\label{fig4}}
\end{center}
\end{figure}

Finally, to mimic a more realistic scenario, where benign data do not exactly 
conform to the underlying model, we repeat the experiment by introducing 
Gaussian noise with zero mean and variance $\sigma^2=3\cdot 10^{-2}$ 
in the ordinates of the inliers.

In the absence of outliers, the ordinary least squares (OLS) solution 
with uniform weights produces the regression line
\[
y\approx 1.50\cdot 10^{-2}+4.71 \cdot 10^{-1} x,
\]
with corresponding uniform-weight mean squared error
\begin{equation}
\label{MSEuw1}
\mathrm{MSE}_{\mathrm{uw}} \simeq 9.41\cdot 10^{-4}.
\end{equation} 

In the presence of outliers, the OLS approximation yields the regression line
\[
y\approx 3.40\cdot 10^{-1}+3.14 \cdot 10^{-1} x,
\]
shown as the yellow dashed line in the right panel of Figure~\ref{fig4}.  

We then solve the MEWLS problem by reducing the MSE down to the value 
\eqref{MSEuw1}.\footnote{Practical strategies for selecting a suitable 
reduction factor are discussed in \cite{DeFaIaLoMaRu25}.} In this setting, the inlier data are no longer exactly interpolatory, 
but lie on the underlying model \eqref{model1} up to small Gaussian noise. 
Hence, the assumptions of Proposition~\ref{prop:weights_on_S_uniform} 
are only approximately satisfied. 

Nevertheless, the same separation mechanism is observed numerically: 
the weights associated with the outliers decay to values close to zero, 
while the weights corresponding to the inliers remain approximately balanced. 

The resulting regression line, depicted as the green solid line in the right panel 
of Figure~\ref{fig4}, has equation
\[
y\approx 1.46\cdot 10^{-2}+4.78 \cdot 10^{-1} x,
\]
which is close to the outlier-free scenario. 
This confirms that the MEWLS mechanism remains effective even when the 
interpolatory assumption is relaxed and only approximately satisfied.

As observed in the introduction and discussed in Section \ref{sec_formulation},  the MEWLS problem is, in general, nonconvex  and may admit multiple local maximizers of the entropy functional under 
the MSE constraint. 
In the present example, the existence of a sufficiently large subset 
$S$ of points  conforming to the model (inliers)  ensures that the initial 
uniform-weight configuration lies within the basin of attraction of the 
branch associated with $S$. As the MSE decreases, the solution path 
continuously evolves toward a configuration where the weights concentrate 
uniformly on $S$, while the complementary weights vanish.

However, if the number of outliers/inliers is progressively increased/decreased, this 
dominance condition may fail. In such a situation, the uniform-weight 
initialization may fall into the basin of attraction of a different 
local maximizer, and the resulting regression line converges toward 
a distinct local solution. 

Nevertheless, extensive numerical experience on datasets of different 
nature and arising from heterogeneous application contexts, together 
with comparisons against established robust regression techniques, 
suggests that the MEWLS approach exhibits a remarkably high breakdown 
point, thereby providing a significant degree of robustness in practice.

\subsection{Example 2}
In this example we illustrate two different breakdown events caused by the loss 
of invertibility of the Jacobian matrix $J(y(E))$ along the solution branch.

We first consider four data points with abscissae and ordinates given by
$$
\begin{array}{r|llll}
x & 0.3 & 0.3 & 0.7 & 0.7 \\
\hline  
y & 0.4 & 0.6 & 0.4 & 0.6
\end{array}
$$
These points (blue circles in the left panel of Figure~\ref{fig5}) are 
symmetrically distributed with respect to the horizontal line $y=1/2$, and lie at equal distance from it.

\begin{figure}
\begin{center}
\includegraphics[width=.5\textwidth]{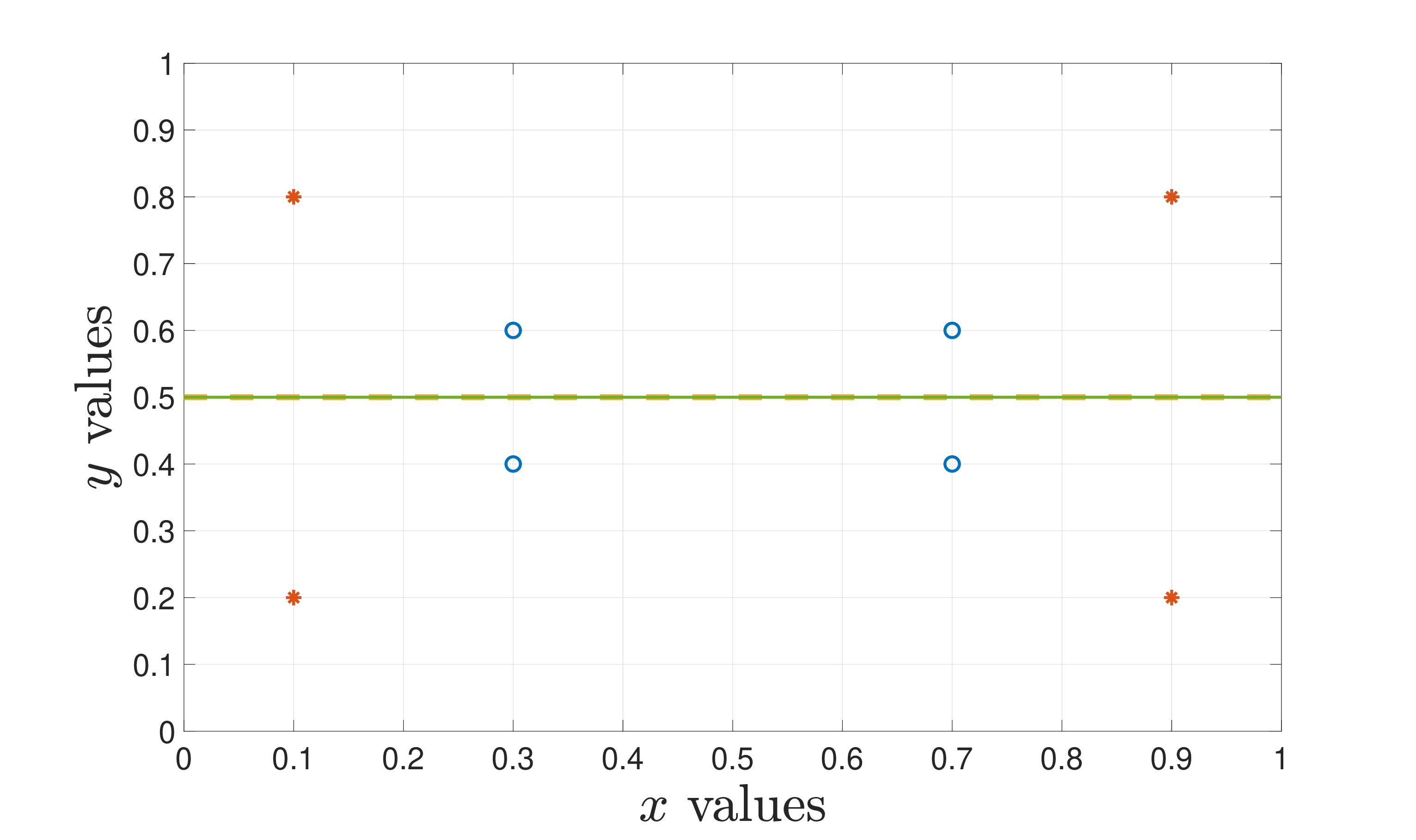} \hspace*{-0.5cm}
\includegraphics[width=.5\textwidth]{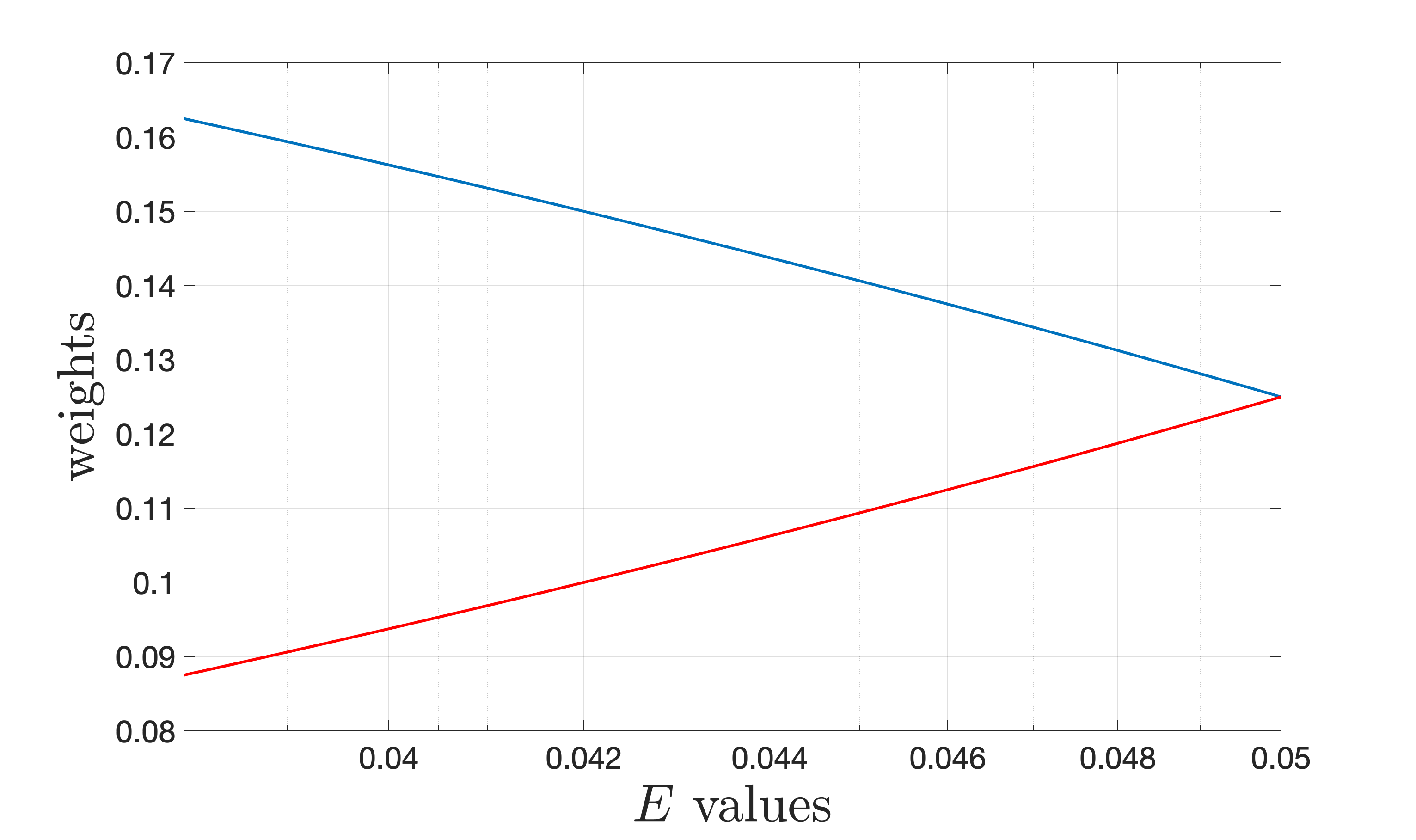} 
\caption{Left: initial uniform-weight regression line (dashed yellow) and MEWLS regression line (solid green) for the symmetric dataset.  Because of symmetry with respect to $y=1/2$, the regression line remains unchanged along the branch as $E$ decreases.  Right: evolution of the optimal weights along the solution branch. 
The weights associated with the points farthest from $y=1/2$ (red asterisks) decrease monotonically (red solid line), whereas those corresponding to the closer points (blue circles) increase symmetrically (blue solid line), illustrating the entropy redistribution process.
 \label{fig5}}
\end{center}
\end{figure}

By symmetry, the ordinary least squares regression line with uniform 
weights coincides with $y=1/2$. In this configuration, the vector 
of squared residuals $r^2$ is constant.  According to Theorem \ref{theo1} and Proposition~\ref{prop:J-invertible}, this implies that
$\det J(y(E_{\mathrm{uw}})) = 0$,
so that the Jacobian matrix is singular at the uniform-weight solution. Consequently,  the existence and uniqueness of the branch is not guaranteed even locally.

We now enrich the dataset by adding four further points 
(red asterisks in the left panel of Figure~\ref{fig5}) defined as 
$$
\begin{array}{r|llll}
x & 0.1 & 0.1 & 0.9 & 0.9 \\
\hline  
y & 0.2 & 0.8 & 0.2 & 0.8
\end{array}
$$
The enlarged dataset still exhibits perfect symmetry with respect to 
the line $y=1/2$, but the squared residual vector $r^2$ is no longer constant.

Because of symmetry, the regression line does not change as $E$ decreases: 
the component $x(E)$ of the solution branch remains constant. 
The initial regression line (dashed yellow) and the final configuration 
(solid green) are therefore indistinguishable in the left panel of 
Figure~\ref{fig5}.

Although the regression line remains fixed, the weights exhibit a 
nontrivial evolution (right panel of Figure~\ref{fig5}).  The four weights associated with the points farthest from the line  $y=1/2$ decrease monotonically and, by symmetry, follow the same law. 
Conversely, the four weights corresponding to the points closer to the 
line increase in a symmetric and complementary fashion as $E$ decreases. Thus, even though the geometric configuration of the regression line remains unchanged, the entropy redistribution mechanism remains active.
\begin{figure}
\begin{center}
\includegraphics[width=.5\textwidth]{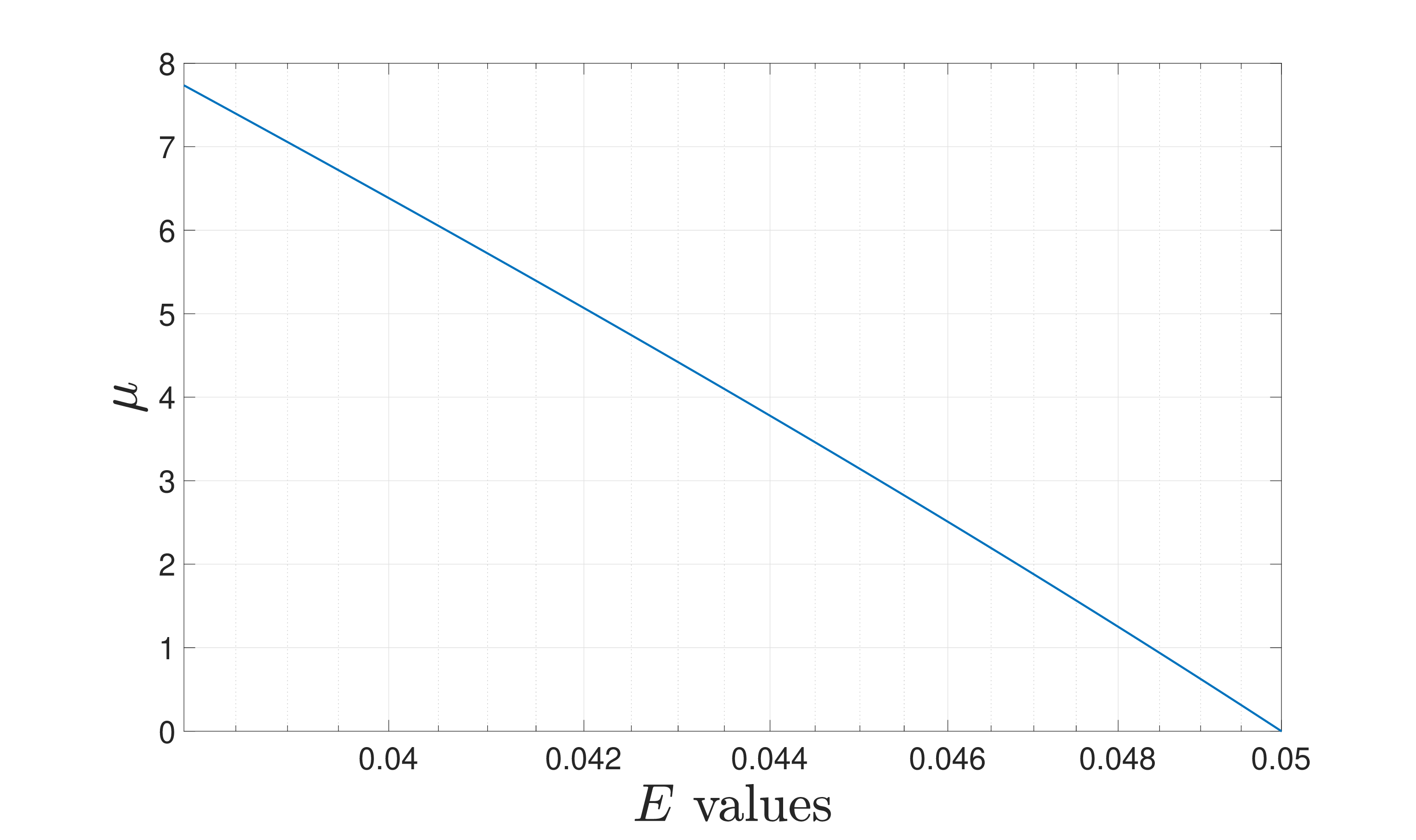} \hspace*{-0.5cm}
\includegraphics[width=.5\textwidth]{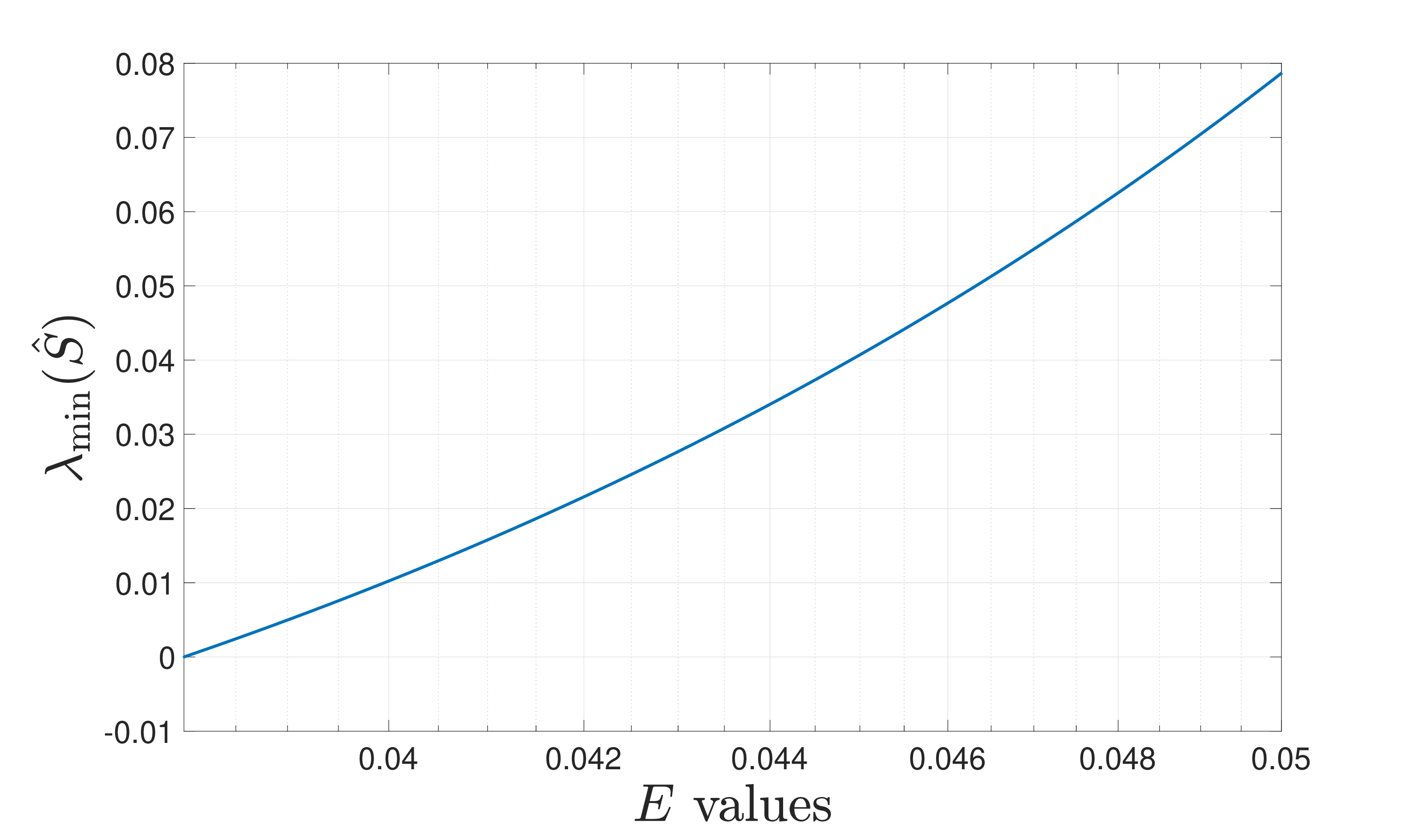} 
\caption{Left: monotone behavior of the Lagrange multiplier $\mu(E)$ along the solution branch. 
Right: smallest eigenvalue of the matrix $\widehat S$ as a function of $E$. 
At $E \approx 3.80 \cdot 10^{-2}$, the smallest eigenvalue vanishes, so that the Jacobian $J(y(E))$ loses invertibility. This loss of regularity prevents further continuation of the solution branch.
 \label{fig6}}
\end{center}
\end{figure}

The monotonic behavior of the multiplier $\mu(E)$ is shown in the left 
panel of Figure~\ref{fig6}. The right panel of the same figure reports 
the smallest eigenvalue of the matrix $\widehat S$. We observe that for
$E \approx 3.80 \cdot 10^{-2}$, the smallest eigenvalue vanishes, and the matrix $\widehat S$ becomes singular. 
By Proposition~\ref{prop:J-invertible}, this implies that the Jacobian 
matrix $J(y(E))$ becomes singular at this value of $E$. Consequently, the 
solution branch cannot be further prolonged.

Although the present example is deliberately constructed and represents 
an exceptional symmetric configuration, it highlights a structural 
mechanism of breakdown: when the data cloud becomes symmetrically 
distributed with respect to the regression model, the information 
content carried by the residual structure degenerates.

In practical situations, a similar phenomenon may occur when 
$\det J(y(E))$ becomes very small, even if not exactly zero. 
This signals that the entropy-driven selection process is starting 
to discard data points that still carry relevant geometric information. 
From a computational perspective, this can provide a natural stopping 
criterion for the continuation procedure.

%%%%%%%%%%%%%%%%%%%%%%%%%%%%%
\section{Conclusions}
\label{sec_conclusions}
We have studied the structure of stationary points of an entropy-constrained weighted least squares problem for overdetermined linear systems. 
The nonconvexity induced by the residual constraint prevents a global convexity-based analysis and requires a local investigation of solution branches.

We have established existence and uniqueness of a smooth branch of stationary solutions emanating from the ordinary least squares configuration. 
By differentiating the first-order optimality system with respect to the MSE parameter, we obtained a differential system describing the evolution of the branch and providing a continuation framework up to possible singular points characterized by loss of Jacobian invertibility.

The asymptotic analysis as the prescribed MSE tends to zero shows that the entropy-maximizing solution may concentrate on a largest consistent subset of observations. 
This provides a theoretical explanation of the selective behavior of the method in the presence of anomalous data.

Further work will focus on the efficient numerical solution of the differential system arising from the continuation framework, exploiting the sparsity and structured nature of the associated Jacobian matrix and investigating suitable integration strategies.

\section*{Acknowledgements}
The first author is a member of the ``Gruppo Nazionale per il Calcolo Scientifico--Istituto Nazionale di Alta Matematica (GNCS--INdAM)''.

%%%%%%%%%%%%%%%%%%%%%%%%%%%%%%%

\end{document}